\DeclareRobustCommand{\tvdots}{%
  \vbox{\baselineskip4\p@\lineskiplimit\z@\kern0\p@\hbox{.}\hbox{.}\hbox{.}}}
\theoremstyle{plain}
\newtheorem{thm}{Theorem}[section]
\newtheorem{prop}[thm]{Proposition}
\newtheorem{corr}[thm]{Corollary}
\newtheorem{lem}[thm]{Lemma}
\theoremstyle{definition}
\newtheorem{defn}[thm]{Definition}
\theoremstyle{remark}
\newtheorem*{rem}{Remark}
\newcommand{\map}[3]{#1\colon #2 \to #3}
\newcommand{\mb}{\mathbb}
\newcommand{\del}{\delta}
\newcommand{\sig}{\sigma}
\newcommand{\lam}{\lambda}
\newcommand{\Lam}{\Lambda}
\newcommand{\pr}{\prime}
\newcommand{\mc}{\mathcal}
\newcommand{\im}[1]{\mbox{im}(#1)}
\newcommand{\Hom}[3]{\mbox{Hom}_{#1}(#2, #3)}
\newcommand{\teha}[1]{\mbox{T}(EH(#1))}
\newcommand{\bsym}[1]{\boldsymbol{#1}}
\title{Spectral sequences for the cyclic cohomology of differential graded algebras}
\author{Andrew Phimister}
\date{} 
\begin{document}
\maketitle

\begin{abstract}
We construct a number of new spectral sequences for calculating the cyclic cohomology $HC^*_{dg}(A)$ of a differential graded algebra (dga). With these spectral sequences we prove some results about the low dimensional cyclic cohomology and demonstrate the existence of various maps between $HC^*_{dg}(A)$ and $HH^*_{dg}(A)$. We also briefly introduce variations on Hochschild and cyclic cohomology of a dga, namely the $n$-th partial Hochschild cohomology and $n$-partial cyclic cohomology. Finally, we show how these results can be extended naturally to the dg-category setting. In particular we define the Hochschild and cyclic cohomolgy of dg-categories and show that the spectral sequences we have constructed can be used in this setting as well.
\end{abstract}

\paragraph{\textbf{Note}} This preprint is a modified version of Chapters 3, 4.5 and 7.2 of the PhD thesis of the author~\cite{phimphd2025}.

\section{Introduction}

\paragraph{\textbf{Conventions}} $\bsym{1}_C$ will denote the identity map of $C$, although we will occasionally drop the subscript. $\Bbbk$ is a field of characteristic 0. We write $\mc{T}(C^*)$ to denote the total complex of a cochain complex $C^*$, in contrast to $\mbox{Tot}(C^*)$ which is how it is typically denoted (for instance see~\cite{weibel}). The homogeneous degree of elements in graded spaces will be denoted by straight brackets $|\cdot|$.
\newline\newline\noindent
Let $A$ be an associative $\Bbbk$-algebra. Then the \emph{Hochschild complex} of $A$ with coefficients in $M$, denoted $C^\ast(A, M)$ is the cochain complex
\[
\begin{tikzcd}
M \ar{r}{b} & \Hom{\Bbbk}{A}{M} \ar{r}{b} & \Hom{\Bbbk}{A^{\otimes 2}}{M}  \ar{r} & \cdots
\end{tikzcd}
\]
where $b$ is called the \emph{Hochschild coboundary map} and is given by $b = \sum_{i = 0}^n(-1)^i\partial^i$, where $\partial^i$ are the \emph{coface maps} $\map{\partial^i}{\Hom{\Bbbk}{A^{\otimes n-1}}{M}}{\Hom{\Bbbk}{A^{\otimes n}}{M}}$ defined by
\[ \partial^i(f)(a_1\otimes\dots\otimes a_{n})=
	\begin{cases}
		a_1f(a_2\otimes\dots\otimes a_{n}) & \mbox{ if } i = 0 \\
		f(a_1\otimes\dots\otimes a_{i}a_{i+1}\otimes\dots\otimes a_{n}) &\mbox{ if } 1\le i< n \\
		f(a_1\otimes\dots\otimes a_{n-1})a_{n} &\mbox{ if } i = n  
	\end{cases}
\]
If $M = A^*= \Hom{\Bbbk}{A}{\Bbbk}$ then we write $C^*(A):= C^*(A, A^*)$, and in this case 
\[
	C^n(A)=\Hom{\Bbbk}{A^{\otimes n +1}}{\Bbbk}
\]
\begin{defn}
The \emph{Hochschild cohomology $HH^\ast(A, M)$ of $A$ with coefficients in $M$} is defined as
\[
	HH^n(A, M) = H^n(C^\ast(A, M))
\]
\end{defn}

If $(A, \phi)$ is a differential graded algebra (dga) then each $\Hom{\Bbbk}{A^{\otimes n}}{M}$ is a dg-module, so then we have the bicomplex
\[
\begin{tikzcd}
\vdots & \vdots & \vdots \\
M^2 \arrow{u} \arrow{r}{b} & \Hom{\Bbbk}{A}{M}^2 \arrow{u} \arrow{r}{b} & \Hom{\Bbbk}{A^{\otimes 2}}{M}^2 \arrow{u} \arrow{r} & \cdots \\
M^1 \arrow{u}{\del} \arrow{r}{b} & \Hom{\Bbbk}{A}{M}^1 \arrow{u}{\del} \arrow{r}{b} & \Hom{\Bbbk}{A^{\otimes 2}}{M}^1 \arrow{u}{\del} \arrow{r} & \cdots \\
M^0 \arrow{u}{\del} \arrow{r}{b} & \Hom{\Bbbk}{A}{M}^0 \arrow{u}{\del} \arrow{r}{b} &  \Hom{\Bbbk}{A^{\otimes 2}}{M}^0 \arrow{u}{\del} \arrow{r} & \cdots
\end{tikzcd}
\] 
where $\map{\del}{\Hom{\Bbbk}{A^{\otimes p}}{A}^q}{\Hom{\Bbbk}{A^{\otimes p}}{A}^{q+1}}$ is given by
\[
	\del^{p,q}(f) = \phi\circ f - (-1)^qf\circ \phi^{\otimes p}.
\]
and $\phi^{\otimes p}$ is the differential of $A^{\otimes p}$. We will denote this bicomplex $EH(A, M)$ and refer to it as the \emph{Hochschild bicomplex}. So then
\[
	EH(A, M)^{p, q} = \Hom{\Bbbk}{A^{\otimes p}}{M}^q.
\] 

\begin{defn}
The \emph{Hochschild complex $C_{dg}(A, M)$} of the dg-algebra $A$ is the total complex $\mc{T}(EH(A, M))$ of the Hochschild bicomplex. The \emph{Hochschild cohomology $HH^\ast_{dg}(A, M)$} of the dg-algebra $A$ with coefficients in $M$ is then 
\[
	HH^n_{dg}(A, M) =  H^n(C_{dg}(A, M))
\]
When $M = A^\ast$ then we denote $HH^\ast_{dg}(A) := HH^\ast_{dg}(A, A^\ast)$.
\end{defn} 

\section{Cocyclic Modules}

\begin{defn}
Let $\Delta$ be the category consisting of finite ordered sets $[n] = \{0<1<\dots< n\}$, where $n\in\mb{N}$, and whose morphisms are non-decreasing maps. We call $\Delta$ the \emph{simplicial category}. 
\end{defn}

\begin{defn}
Let $R$ be a commutative unital ring. A \textit{cosimplicial $R$-module} is a covariant functor $\map{M}{\Delta}{\textbf{Mod}_R}$. The module $M([n])$ will be written $M^n$.
\end{defn}

Dually we also have the notion of a \emph{simplicial $R$-module}, which is a contravariant functor $\map{M}{\Delta}{\textbf{Mod}_R}$.

For two arbitrary objects $[n]$ and $[m]$ in $\Delta$ there are $\binom{m+n+1}{n+1}$ morphisms $[n]\to [m]$. To make sense of these morphisms we introduce the following maps.

\begin{defn}
For each $n$ and $i = 0, \dots, n$, the \textit{coface maps} $\map{\epsilon^i}{[n-1]}{[n]}$ are given by
\[
	\epsilon^i(j) =
	\begin{cases}
		j & \mbox{ if } j < i \\
		j+1 & \mbox{ if } j\ge i
	\end{cases}
\]
and the \textit{codegeneracy maps} $\map{\eta^i}{[n+1]}{[n]}$ are given by
\[
	\eta^i(j) =
	\begin{cases}
		j &\mbox{ if } j\le i \\
		j-1 & \mbox{ if } j>i
	\end{cases}
\]
\end{defn}

\begin{lem}
The coface and codegeneracy maps satisfy the ``\textit{cosimplicial identities}'':
\begin{align*}
	&\epsilon^j\epsilon^i = \epsilon^i\epsilon^{j-1} \quad\mbox{ if } i<j\\
	&\eta^j\eta^i = \eta^{i-1}\eta^j\quad\mbox{ if } i> j \\
	&\eta^j\epsilon^i =
		\begin{cases}
			\epsilon^i\eta^{j-1} \mbox{ if } i< j\\
			\bsym{1}_{[n-1]} \mbox{ if } i=j, \mbox{ or } i = j+1\\
			\epsilon^{i-1}\eta^j \mbox{ if } i>j+1
		\end{cases}
\end{align*}
\end{lem}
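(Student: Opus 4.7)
The plan is a direct pointwise verification. Each identity equates two composites of the form $\epsilon^a\epsilon^b$, $\eta^a\eta^b$, or $\eta^a\epsilon^b$; since all constituent maps are defined explicitly on finite ordered sets by elementary arithmetic, it suffices, for each identity, to fix an arbitrary $k$ in the common domain and check agreement using the piecewise definitions. There is no conceptual subtlety, only careful bookkeeping.

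For the first identity with $i<j$, I apply each side to $k\in[n-2]$ and split into the three ranges $k<i$, $i\le k<j-1$, and $k\ge j-1$; the hypothesis $i<j$ guarantees $j-1\ge i$, so these cases exhaust the domain, and in each both sides evaluate to $k$, $k+1$, and $k+2$ respectively. The second identity is handled symmetrically: with $i>j$, I split $k\in[n+1]$ on $k\le j$, $j<k\le i$, and $k>i$, and both sides return $k$, $k-1$, and $k-2$. In each of these two identities the hypothesis on $i,j$ is exactly what is needed to ensure the case boundaries are consistent after the index shift on the right-hand side.

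The third identity has an outer three-way split on the position of $i$ relative to $j$, and within each outer case one performs the same kind of case analysis on $k\in[n-1]$. The middle case $i\in\{j,j+1\}$ is the structurally meaningful one: $\epsilon^i$ inserts a value at position $i$ that $\eta^j$ immediately collapses, producing $\bsym{1}_{[n-1]}$. The only real obstacle is the risk of an off-by-one error when a shifted index crosses a case boundary, so in the writeup I would guard against this by first tabulating the values of $\epsilon^i(k)$, $\eta^j(k)$, and their evaluations at $k\pm 1$ near each threshold, and then reading all three identities directly off that table.
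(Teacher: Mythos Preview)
Your proposal is correct and follows essentially the same approach as the paper, which simply states that the identities can be verified by manually checking both sides. You have merely spelled out the case analysis that the paper leaves implicit.
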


\begin{proof}
This can be seen by manually checking both sides of each identity.
\end{proof}

Consider an arbitrary morphism $\map{f}{[n]}{[m]}$. The only isomorphisms in $\Delta$ are identities, so all non-trivial morphisms either fail to be injective, or fail to be surjective. 

So what does it mean for $f$ to be non-injective? It means that there are two elements $i, j\in [n]$ such that $i\ne j$, then $f(i) = f(j)$. Without loss of generality we may assume that $i < j$. The restriction that $f$ preserves order forces all the elements $\{i +1, i +2, \dots, j-1\}$ to also be sent to the same element by $f$. In other words, $f$ fails to be injective if and only if there is at least one element $i$ such that $f(i) = f(i+1)$.  

The failure of surjectivity is as it is with any other typical map. 

So $f$ is characterised by its image in $[m]$, and the set of elements $\{j_1, \dots, j_h\}$ in $[n]$ such that $f(j_k) = f(j_k + 1)$.

\begin{prop}
Every morphism $\map{f}{[n]}{[m]}$ in $\Delta$ has a unique factorisation
\[
	f = \epsilon^{i_1}\cdots\epsilon^{i_k}\eta^{j_1}\cdots\eta^{j_s}
\] 
where $n-s+k=m$ and
\[
	0\le i_k < \dots < i_1 < m, \quad 0\le j_1< \dots < j_s < n-1
\]
\end{prop}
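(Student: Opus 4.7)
The plan is to prove existence by factoring $f$ through its image, and then to obtain uniqueness from a counting argument.

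For existence, I would first write the canonical (epi, mono)-factorisation $f = \iota \circ g$, where $g\colon [n] \twoheadrightarrow [n-s]$ is the order-preserving surjection identifying exactly the pairs $(p, p+1)$ with $f(p) = f(p+1)$, and $\iota\colon [n-s] \hookrightarrow [m]$ is the inclusion onto $\mathrm{im}(f)$. The analysis in the paragraph preceding the proposition shows that $f$ is determined by, and uniquely determines, the pair $(g, \iota)$. Now let $j_1 < \dots < j_s$ enumerate those $p \in \{0, \dots, n-1\}$ with $f(p) = f(p+1)$, and let $i_1 > \dots > i_k$ enumerate the elements of $[m] \setminus \mathrm{im}(f)$ in decreasing order. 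I would then verify by induction on $s$ and $k$ respectively that
\[
g = \eta^{j_1} \cdots \eta^{j_s}, \qquad \iota = \epsilon^{i_1} \cdots \epsilon^{i_k}.
\]
Reading right-to-left, $\eta^{j_s}$ collapses the topmost identified pair of $[n]$ first, and because $j_1 < \dots < j_{s-1} < j_s$ the remaining collapse indices are undisturbed by that contraction; dually, $\epsilon^{i_1}$ inserts the largest missing element last, and since $i_k < \dots < i_1$ each $\epsilon^{i_\ell}$ points at the correct gap in the intermediate codomain. The identity $n - s + k = m$ is forced by matching sources and targets in the composition.

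For uniqueness I would use the count already noted in the paper: there are $\binom{m+n+1}{n+1}$ morphisms $[n] \to [m]$ in $\Delta$. On the other hand, the number of admissible tuples $(i_1, \dots, i_k;\, j_1, \dots, j_s)$ satisfying the stated strict inequalities, indexed by $s \ge 0$ with $k = m - n + s$, is
\[
\sum_{s \ge 0} \binom{m+1}{k}\binom{n}{s} = \sum_{s \ge 0} \binom{m+1}{n+1-s}\binom{n}{s} = \binom{m+n+1}{n+1}
\]
by Vandermonde's identity. Since the existence step produces at least one admissible factorisation for every $f$, and the two counts agree, the assignment from tuples to morphisms is a bijection, which gives uniqueness.

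The main obstacle is the bookkeeping in the existence step: one must check carefully that composing the codegeneracies (and cofaces) right-to-left in the prescribed order really recovers the surjection $g$ (respectively the inclusion $\iota$). This is a direct inductive calculation once the ordering conventions are fixed. A conceptual alternative to the counting argument, which I would mention only as a sanity check, is to take any factorisation and rewrite it into the stated normal form by iteratively applying the cosimplicial identities of the preceding lemma; this yields the same conclusion but at the cost of substantially more notation.
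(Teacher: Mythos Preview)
Your existence argument is essentially the paper's: both factor $f$ as an order-preserving surjection followed by an injection and then read off the $j$'s and $i$'s from the collapsed pairs and the missing image points respectively. Where you diverge is in uniqueness. The paper disposes of it in one sentence, asserting that any other factorisation can be brought into the stated normal form by repeatedly applying the cosimplicial identities --- exactly the alternative you relegate to a sanity check at the end. Your counting argument via Vandermonde is a genuinely different route: it trades the termination-and-confluence bookkeeping implicit in the rewriting approach for a clean global bijection, at the cost of importing an external combinatorial identity and the morphism count $\binom{m+n+1}{n+1}$ quoted (but not proved) earlier in the paper. The paper's argument is more self-contained in principle but sketchier as written; yours is crisper and fully rigorous once those two inputs are granted. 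One small remark: your counts $\binom{m+1}{k}$ and $\binom{n}{s}$ implicitly use the ranges $i_1 \le m$ and $j_s \le n-1$, which are the mathematically correct ones --- the strict inequalities $i_1 < m$ and $j_s < n-1$ in the proposition as stated appear to be off by one, so your Vandermonde computation is in fact counting the right thing.
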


\begin{proof}
From the above discussion we observe that any morphism $\map{f}{[n]}{[m]}$ can be split into $f = gh$ where $h$ is surjective and characterises the non-injectivity of $f$, and $g$ is injective and characterises the non-surjectivity of $f$. That is, we have the following
\[
\begin{tikzcd}[]
\lbrack n\rbrack \arrow[r, twoheadrightarrow, "h"] & \lbrack\ell\rbrack \arrow[r, hook, "g"] & \lbrack m\rbrack 
\end{tikzcd}
\]
where $\ell$ is the number of unique elements in the image of $f$. There are $n-\ell=:s$ elements $j$ in $[n]$ such that $h(j)=h(j+1)$ and we label these elements $j_1 < \dots < j_s$. We see that $\eta^{j_1}\cdots\eta^{j_s}$ succeeds in merging the elements $j_1,\dots, j_s$ in $[\ell]$ in the appropriate way, and so
\[
	h = \eta^{j_1}\cdots\eta^{j_s}.
\]
Now, $g$ is injective so there are $m-\ell=:k$ elements missing in the image of $g$, and we label these elements $ i_k < \dots < i_1$. Again, we see that $\epsilon^{i_1}\cdots\epsilon^{i_k}$ succeeds in ensuring that the elements $i_1,\dots, i_k$ are not included in the image of $g$, that is, $i_1,\dots, i_k \notin g([\ell])$. As an example, consider the map $\map{f}{[4]}{[5]}$ given by $f(1) = f(2) = 2$ and $f(3) = f(4) = 4$. Then we can visualise the splitting of $f$ into $f=gh$ by
\begin{center}
 \begin{tikzpicture}[>=Stealth]

\node at (0.5, 2) {$h$};
\node at (1.5, 2) {$g$};

\node at (0, 0) {$4$};
\node at (0, 0.5) {$3$};
\node at (0, 1) {$2$};
\node at (0, 1.5) {$1$};

\node at (1, 1) {$2$};
\node at (1, 1.5) {$1$};

\node at (2, -0.5) {$5$};
\node at (2, 0) {$4$};
\node at (2, 0.5) {$3$};
\node at (2, 1) {$2$};
\node at (2, 1.5) {$1$};

\draw[->] (0.2, 0) -- (0.85, 1);
\draw[->] (0.2, 0.5) -- (0.85, 1);
\draw[->] (0.2, 1) -- (0.85, 1.5);
\draw[->] (0.2, 1.5) -- (0.85, 1.5);

\draw[->] (1.2, 1.5) -- (1.85, 1);
\draw[->] (1.2, 1) -- (1.85, 0);

 \end{tikzpicture}
\end{center}
and in this case the elements $1, 3, 5\in[5]$ are the ones missing from the image of $g$. So, in this case then we can set $g=\epsilon^{5}\epsilon^{3}\epsilon^{1}$ and applying this to $[2]$ we get $\epsilon^{5}\epsilon^{3}\epsilon^{1}(1) = \epsilon^{5}\epsilon^{3}(2) = 2$ and $\epsilon^{5}\epsilon^{3}\epsilon^{1}(2) = \epsilon^{5}\epsilon^{3}(3) = \epsilon^{5}(4) = 4$.

So in general we have
\[
	g = \epsilon^{i_1}\cdots\epsilon^{i_k}
\]
and so 
\[
	f = \epsilon^{i_1}\cdots\epsilon^{i_k}\eta^{j_1}\cdots\eta^{j_s}
\]
furthermore, we have $n-\ell -(m -\ell) = h - k$ and so $n-h+k = m$. 

This representation is unique because any other representation can be put into the form above by applying the cosimplicial identities. 
\end{proof}

\begin{prop}
A module $M$ in $\textbf{Mod}_R$ is a cosimplicial module if and only if $M$ can be represented as a sequence of modules $(M^0$, $M^1, \dots)$ together with coface maps $\map{\partial^i}{M^{n-1}}{M^n}$ and codegeneracy maps $\map{\sigma^i}{M^{n+1}}{M^n}$ for $i = 0, \dots, n$, that satisfy the cosimplicial identities. 
\end{prop}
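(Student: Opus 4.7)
My plan is the standard translation between a functor $M\colon \Delta \to \textbf{Mod}_R$ and the combinatorial data of face/degeneracy maps, using the factorisation result of the previous proposition.

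\textbf{Forward direction.} Suppose $M\colon \Delta \to \textbf{Mod}_R$ is a cosimplicial module. I would set $M^n := M([n])$, $\partial^i := M(\epsilon^i)$ and $\sigma^i := M(\eta^i)$. Since $M$ is a functor, it sends compositions to compositions, so applying $M$ to each of the cosimplicial identities for $\epsilon^i$ and $\eta^i$ (proved in the earlier lemma) yields the corresponding identities for $\partial^i$ and $\sigma^i$. This direction is essentially immediate.

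\textbf{Reverse direction.} Given the data $(M^n, \partial^i, \sigma^i)$ satisfying the cosimplicial identities, I would construct a functor $M\colon \Delta \to \textbf{Mod}_R$ by setting $M([n]) = M^n$, and for an arbitrary morphism $f\colon [n]\to[m]$ with unique normal form $f = \epsilon^{i_1}\cdots \epsilon^{i_k}\eta^{j_1}\cdots\eta^{j_s}$ from the previous proposition, defining
\[
	M(f) := \partial^{i_1}\cdots\partial^{i_k}\sigma^{j_1}\cdots\sigma^{j_s}.
\]
Because the factorisation is unique, $M(f)$ is well-defined, and $M$ clearly sends identities to identities (the empty factorisation).

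\textbf{Main obstacle: functoriality.} The non-trivial content is showing $M(gf) = M(g)M(f)$ for composable $f,g$. Writing $f$ and $g$ in their normal forms and concatenating gives an expression for $gf$ as a product of $\epsilon$'s and $\eta$'s, but generally not in normal form: in the concatenation one finds $\eta$'s appearing to the left of $\epsilon$'s and various orderings violated. I would show that the cosimplicial identities are exactly what is needed to rewrite any such product into the unique normal form, so that the concatenated product equals the normal form of $gf$. Concretely, the third cosimplicial identity (the $\eta\epsilon$ relations) lets one push all $\epsilon$'s to the left of all $\eta$'s, while the first and second identities reorder the $\epsilon$-block and the $\eta$-block into the required strict inequalities $i_k < \cdots < i_1$ and $j_1 < \cdots < j_s$. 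A careful bookkeeping (typically by induction on the number of ``inversions'' between adjacent generators) then establishes the rewriting. Applying the same rewriting in the module category using the analogous identities for $\partial^i$ and $\sigma^i$ shows that $M(g)M(f)$ and $M(gf)$ coincide, completing functoriality. Finally, these two constructions are inverse to each other, giving the equivalence claimed.
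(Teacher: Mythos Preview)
Your proposal is correct and follows essentially the same approach as the paper: both directions are set up identically, defining $M^n = M([n])$, $\partial^i = M(\epsilon^i)$, $\sigma^i = M(\eta^i)$ for the forward direction, and using the unique factorisation to define $M(f)$ in the reverse direction. The paper handles functoriality in a single sentence (``since $\epsilon^i, \eta^i, \partial^i$ and $\sigma^i$ all satisfy the cosimplicial identities then the rules governing their composition is the same''), whereas you spell out more carefully the rewriting argument that underlies this claim; your treatment is more detailed but not different in substance.
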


This proposition is sometimes given as the definition of a cosimplicial object \cite[page 267]{BousfieldA.K.1972Hlca}, in our case we are restricting our focus to cosimplicial objects in the category of $R$-modules.

\begin{proof}
Suppose $M$ is a cosimplicial module. Then set $M^n = M([n])$, $\partial^i = M(\epsilon^i)$ and $\sigma^i = M(\eta^i)$. Since $M$ is a covariant functor and $\epsilon^i$ and $\eta^i$ satisfy the cosimplicial identities, then so do $\partial^i$ and $\sigma^i$.

Conversely, suppose we have a sequence of modules $(M^0, M^1, \dots)$ and coface and codegeneracy maps $\partial^i$ and $\sigma^i$ respectively, satisfying the cosimplicial identities. Then we set $M([n]) = M^n$, and for some morphism $f =  \epsilon^{i_1}\cdots\eta^{j_s}$ we set $M(f) = \partial^{i_1}\cdots\sigma^{j_s}$. Since $\epsilon^i, \eta^i, \partial^i$ and $\sigma^i$ all satisfy the cosimplicial identities then the rules governing their composition is the same, and therefore $M$ is a covariant functor. 
\end{proof}

\begin{defn}
Let $M$ and $N$ be cosimplicial modules. The \emph{product cosimplicial module $M\times N$} is the cosimplicial module with $(M\times N)^n = M^n\times N^n$ and coface maps 
\[
	\partial^i(m, n) = (\partial^i(m), \partial^i(n))
\]
and codegenerancy maps
\[
	\sigma^i(m, n) = (\sigma^i(m), \sigma^i(n))
\]
for $m\in M$ and $n\in N$.
\end{defn}

According to Dold \cite{DoldAlbrecht1958HoSP} there is an equivalence of categories between the category of simplicial modules and the category of chain complexes of modules. This correspondence was independently discovered by Kan \cite{DanielM.Kan1958Ficc}, so is sometimes called the \textit{Dold-Kan Correspondence}. 

In other words, to each simplicial module, there is a uniquely associated chain complex. Dually this means that for every cosimplicial module there is a cochain complex. We can form a cochain complex from a cosimplicial module $M$ in the following way. 

\begin{prop}
\label{cosimp is cochain}
Let $M$ be a cosimplicial module with $\map{\partial^i}{M^{n-1}}{M^n}$ its coface maps and let $b^n$ be given by
\[
	b^n = \sum_{i = 0}^n(-1)^i\partial^i
\]
Then $(M, b)$, with $b = \{b^n\}$, is a cochain complex. 
\end{prop}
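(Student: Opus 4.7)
The plan is to show directly that $b^{n+1}\circ b^n = 0$ by expanding the composition as a double sum over the coface maps and exploiting the first cosimplicial identity $\partial^j\partial^i = \partial^i\partial^{j-1}$ for $i<j$ (which follows from Proposition on the bijection between cosimplicial modules and sequences of modules with coface/codegeneracy maps satisfying the cosimplicial identities). The only input needed is that identity; the codegeneracy maps play no role in this particular statement.

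First I would write
\[
	b^{n+1}\circ b^n = \sum_{j=0}^{n+1}\sum_{i=0}^{n}(-1)^{i+j}\,\partial^j\partial^i
\]
and split the sum into the two regions $\{\,0\le i<j\le n+1\,\}$ and $\{\,0\le j\le i\le n\,\}$. In the first region I would apply the cosimplicial identity to rewrite $\partial^j\partial^i$ as $\partial^i\partial^{j-1}$, reducing that piece to
\[
	S_1 = \sum_{0\le i<j\le n+1}(-1)^{i+j}\,\partial^i\partial^{j-1}.
\]
Next I would reindex $S_1$ by setting $k=j-1$, so that $k$ runs over $i\le k\le n$, giving
\[
	S_1 = -\sum_{0\le i\le k\le n}(-1)^{i+k}\,\partial^i\partial^{k}.
\]
The second region is already in the form
\[
	S_2 = \sum_{0\le j\le i\le n}(-1)^{i+j}\,\partial^j\partial^i,
\]
and after renaming $j\mapsto i$ and $i\mapsto k$ this equals $\sum_{0\le i\le k\le n}(-1)^{i+k}\partial^i\partial^k$, so $S_1+S_2=0$ and we conclude $b^{n+1}\circ b^n=0$.

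The only thing requiring any care is the boundary of the reindexing in $S_1$ (checking that the diagonal $i=k$ is correctly included in both sums so that the cancellation is exact), but this is a routine bookkeeping check rather than a genuine obstacle; there is no real conceptual difficulty here because the statement is the standard alternating-sum-to-differential construction and the cosimplicial identity is precisely engineered to make it work.
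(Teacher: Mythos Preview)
Your proof is correct and follows essentially the same approach as the paper: both expand $b\circ b$ as a double sum, split according to the relative order of the two indices, apply the cosimplicial identity $\partial^j\partial^i=\partial^i\partial^{j-1}$ for $i<j$ to one piece, and observe the resulting cancellation. The only differences are cosmetic (you work with $b^{n+1}b^n$ rather than $b^nb^{n-1}$, and you make the reindexing explicit where the paper simply states that the terms cancel).
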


\begin{proof}
Consider $b^nb^{n-1}$, this is given by
\[
	b^nb^{n-1} = \sum_{i=0}^n\sum_{j=0}^{n-1}(-1)^{i+j}\partial^i\partial^j
\]
we can split this sum in the following way
\[
	b^nb^{n-1} = \sum_{i\le j}(-1)^{i+j}\partial^i\partial^j + \sum_{i>j}(-1)^{i+j}\partial^i\partial^j
\]
from the identity $\partial^i\partial^j = \partial^j\partial^{i-1}$ for $j<i$ we see that the term $(-1)^{i+j}\partial^i\partial^j$ from the first sum cancels with $(-1)^{i+1+j}\partial^{i+1}\partial^j$ in the second sum. Therefore $b^nb^{n-1} = 0$.
\end{proof}

Therefore, we can calculate the cohomology $H^\ast(M)$ of the cochain complex $(M, b)$ associated to the cosimplicial $R$-module $M$, and we refer to it as the \emph{cohomology of the cosimplicial module M}.

\begin{defn}
\label{cocyclic mod}
A \emph{cocyclic module} $M$ is a cosimplicial module with an action of the cyclic group $\mb{Z}/(n+1)\mb{Z}$ for each $M^n$, which we denote $t^n$, such that the following identities hold
\begin{align*}
	&(t^n)^{n+1} = \bsym{1}_{M^n} \\
	&t^n\partial^i = -\partial^{i-1}t^{n-1} \mbox{ and } t^n\sigma^i = -\sigma^{i-1}t^{n+1} \mbox{ for } 1\le i\le n\\
	&t^n\partial^0 = (-1)^n\partial^n \mbox{ and } t^n\sigma^0 = (-1)^n\sigma^n(t^{n+1})^2 
\end{align*}
where $\map{\partial^i}{M^{n-1}}{M^n}$ and $\map{\sigma^i}{M^{n+1}}{M^n}$ are the $i$-th coface and codegeneracy maps respectively. 
\end{defn}

We will denote $t^n=: t$ for all $n$, to simplify notation. We may also define a cocyclic module as a covariant functor from the cyclic category \cite{connes1983cohomologie}, \cite{ConnesAlain1994Ng}.

Before we define the cyclic cohomology of a cocyclic module, we need two more maps. First, we define the norm operator $\map{N}{M^n}{M^n}$ as $N = \sum_{i=0}^nt^i$ and $\map{b^\prime}{M^{n-1}}{M^n}$ as $b^\pr = \sum_{i=0}^{n-1}(-1)^i\partial^i$. 

\begin{lem}
\label{cocyc idents}
Let $b$ be the differential defined in Proposition~\ref{cosimp is cochain}. Then we have the following identities:
\begin{enumerate}[i)]
\item $(\bsym{1}-t)b = b^\pr(\bsym{1}-t)$. 
\item $Nb^\pr = bN$.
\item $N(\bsym{1}-t) = (\bsym{1}-t)N = 0$.
\end{enumerate}
where $\bsym{1}$ is the identity map. 
\end{lem}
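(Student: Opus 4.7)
The plan is to verify each identity by direct computation from the cocyclic relations of Definition~\ref{cocyclic mod}, in order of increasing difficulty. Identity (iii) is immediate: since $N = \sum_{i=0}^n t^i$ and $t^{n+1} = \bsym{1}$, both $(\bsym{1}-t)N$ and $N(\bsym{1}-t)$ telescope to $\bsym{1} - t^{n+1} = 0$, with no face-map relation needed.

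For (i), I would expand $tb = \sum_{i=0}^n(-1)^i t\partial^i$, split off the $i=0$ term, and apply $t\partial^0 = (-1)^n\partial^n$ together with $t\partial^i = -\partial^{i-1}t$ for $1\le i\le n$. After reindexing $i\mapsto i-1$, the remaining sum becomes $b^\pr t$, yielding $tb = (-1)^n\partial^n + b^\pr t$. Since $b = b^\pr + (-1)^n\partial^n$, subtracting collapses this to $b - tb = b^\pr - b^\pr t$, which is (i).

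Identity (ii) is where the real work lies. Rearranging the cocyclic relation as $\partial^j t = -t\partial^{j+1}$ for $0\le j\le n-1$, a short induction on $k$ yields
\[
    \partial^i t^k = \begin{cases} (-1)^k t^k\partial^{i+k} & \mbox{if } i+k\le n, \\ (-1)^{k+n}t^{k+1}\partial^{i+k-n} & \mbox{if } i+k > n, \end{cases}
\]
where the second case uses $\partial^n = (-1)^n t\partial^0$ to wrap the index around. Substituting into $bN = \sum_{i,k}(-1)^i\partial^i t^k$ and collecting by the outgoing face index $l$, I would find that for each $1\le l\le n-1$ the two regimes contribute the partial sums $\sum_{k=0}^l t^k$ and $\sum_{k=l+1}^n t^k$, whose concatenation is $N$. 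This reproduces exactly the coefficient $(-1)^l N$ of $\partial^l$ in $Nb^\pr = \sum_{i=0}^{n-1}(-1)^i N\partial^i$.

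The boundary indices $l=0$ and $l=n$ leave a residue $(\bsym{1} - N)\partial^0 + (-1)^n(N - t^n)\partial^n$, which vanishes by iterating $t\partial^0 = (-1)^n\partial^n$: this gives $t^k\partial^0 = (-1)^n t^{k-1}\partial^n$ for each $k\ge 1$, so $(N-\bsym{1})\partial^0 = (-1)^n(N-t^n)\partial^n$. The only real difficulty is the bookkeeping involved in separating the two regimes of $i+k$ relative to $n$; once that is settled, the rest of (ii) is a direct telescoping calculation.
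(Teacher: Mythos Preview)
Your argument is correct. Parts (i) and (iii) match the paper's proof essentially line for line. For part (ii) you take a mildly different route: the paper expands $Nb^\pr$, pushes the $t^j$ to the \emph{right} of the coface maps via $t^j\partial^i = (-1)^j\partial^{i-j}t^j$ (and its wrapped variant), and then collects by the power of $t$ appearing on the right, observing that each $t^k$ carries coefficient $b$; this yields $Nb^\pr = bN$ directly with no residual terms. You instead expand $bN$, push the $t^k$ to the \emph{left}, and collect by outgoing face index $l$; for $1\le l\le n-1$ the coefficient assembles to $(-1)^lN$, but the extreme indices $l=0$ and $l=n$ leave the residue you then cancel using $t^k\partial^0 = (-1)^n t^{k-1}\partial^n$. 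Both organisations are the same double sum read along different axes; the paper's grouping is marginally cleaner in that no boundary case arises, while yours makes the comparison with the target $Nb^\pr = \sum_{l=0}^{n-1}(-1)^l N\partial^l$ more transparent.
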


\begin{proof}
i) From the definition of cocylic modules we know that $t\partial^i = -\partial^{i-1}t$ and $t\partial^0 = (-1)^n\partial^n$. Multiplying both sides of the first of these two by $(-1)^i$, we obtain $t((-1)^i\partial^i) = ((-1)^{i-1}\partial^{i-1})t$ and then summing from $i=1$ to $i =n$ we obtain $t(b-\partial^0) = b^\pr t$. Then from the second observation $tb = b^\pr t + (-1)^n\partial^n$ and since $b - b^\pr = (-1)^n\partial^n$ then we have
\[
	b^\pr (\bsym{1}-t) = (\bsym{1}-t)b
\]
as required.

ii) Now, suppose $i\ge j$, then recursively applying $t\partial^i = -\partial^{i-1}t$ to $t^j\partial^i$ gives
\[
	t^j\partial^i = (-1)^j\partial^{i-j}t^j
\]
and when $i<j$ then
\[
	t^j\partial^i = (-1)^it^{j-i-1}t\partial^0t^i
\]
and so applying $t\partial^0 = (-1)^n\partial^n$ and then simplifying we obtain
\[
	t^j\partial^i = (-1)^{n+j-1}\partial^{n+1+i-j}t^{j-1}
\]
So we can write
\begin{align*}
	 Nb^\pr= \left(\sum_{j=0}^{n}t^j\right)\left(\sum_{i=0}^{n-1}(-1)^i\partial^i\right)  =& \sum_{i\ge j } (-1)^{i-j}\partial^{i-j}t^{j} \\
									    	  &\; + \sum_{i < j}(-1)^{n+1+i-j}\partial^{n+1+i-j}t^{j-1}
\end{align*}  
For some $t^k$ where $0\le k < n$, then its coefficient is given by 
\[
		\sum_{0\le i <n-k}(-1)^i\partial^i + \sum_{n-k\le i\le n}(-1)^i\partial^i
\]
which is the formula for $b$. Therefore $Nb^\pr = bN$.

iii) First recall that $N = \sum_{i=0}^nt^i$, and this can be written 
\[
	\sum_{i=0}^nt^i = t^0 + \sum_{i=1}^nt^i = \bsym{1}_{M^n} +  \sum_{i=1}^nt^i
\]
and observe that 
\[
	Nt = tN =  \sum_{i=1}^{n+1}t^i =  \sum_{i=1}^nt^i + t^{n+1} =  \sum_{i=1}^nt^i +  \bsym{1}_{M^n}.
\]
Therefore $N = Nt$ and $N = tN$, and so $N(\bsym{1}-t) = (\bsym{1}-t)N = 0$.
\end{proof}
As a direct consequence of this lemma, the following diagram
\[
\begin{tikzcd}
\tvdots &\tvdots & \tvdots & \tvdots& \\   
\arrow{u} M^2 \arrow{r}{\bsym{1}-t}&\arrow{u} M^2 \arrow{r}{N}&\arrow{u} M^2 \arrow{r}{\bsym{1}-t}&\arrow{u} M^2 \arrow{r}{N}& \cdots \\
\arrow{u}{b}M^1 \arrow{r}{\bsym{1}-t}&\arrow{u}{-b^{\pr}} M^1 \arrow{r}{N}&\arrow{u}{b}M^1 \arrow{r}{\bsym{1}-t}&\arrow{u}{-b^{\pr}} M^1\arrow{r}{N}& \cdots  \\
\arrow{u}{b}M^0  \arrow{r}{\bsym{1}-t}&\arrow{u}{-b^{\pr}}M^0 \arrow{r}{N}& \arrow{u}{b}M^0 \arrow{r}{\bsym{1}-t} &\arrow{u}{-b^{\pr}}M^0 \arrow{r}{N}& \cdots
\end{tikzcd}
\]
is a bicomplex. We denote this bicomplex by $CC(M)$.

\begin{defn}
The \emph{cyclic cohomology} $HC^\ast(M)$ of the cocyclic module $M$ is defined as 
\[
	HC^n(M) = H^n(\mc{T}(CC(M)))
\]
\end{defn}

\begin{prop}
\label{cochain complex acyclic}
The map $\map{s:=(-1)^{n}\sigma^n}{M^{n+1}}{M^n}$ is a contracting homotopy of the cochain complex $(M, b^\pr)$.
\end{prop}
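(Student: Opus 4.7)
The plan is to verify directly that $sb^\pr + b^\pr s = \bsym{1}_{M^n}$ on every $M^n$, which is the usual contracting-homotopy identity and suffices to show acyclicity of $(M, b^\pr)$. Expanding both compositions on $x\in M^n$, and remembering that the sign $(-1)^n$ in the definition of $s$ depends on the target degree so that the two occurrences of $s$ carry signs of opposite parity, I would write
\[
	(sb^\pr)(x) = \sum_{i=0}^n(-1)^{n+i}\sigma^n\partial^i(x), \qquad (b^\pr s)(x) = \sum_{i=0}^{n-1}(-1)^{n-1+i}\partial^i\sigma^{n-1}(x).
\]

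The next step is to apply the cosimplicial identities from the earlier lemma (via the covariant functor $M$) to the factor $\sigma^n\partial^i$ appearing in the first sum: for $0\le i<n$ the case $i<j$ of the mixed identity gives $\sigma^n\partial^i = \partial^i\sigma^{n-1}$, while for $i=n$ the case $i=j$ gives $\sigma^n\partial^n = \bsym{1}_{M^n}$. Substituting these in yields
\[
	(sb^\pr)(x) = \bsym{1}_{M^n}(x) + \sum_{i=0}^{n-1}(-1)^{n+i}\partial^i\sigma^{n-1}(x),
\]
and combining with $(b^\pr s)(x)$ the two remaining sums cancel termwise, since $(-1)^{n+i}+(-1)^{n-1+i}=0$. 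This leaves exactly $\bsym{1}_{M^n}(x)$.

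There is no serious obstacle here; the argument is really just careful sign bookkeeping. The only subtlety worth flagging is the level-dependent sign on $s$: the parity change between $s_n = (-1)^n\sigma^n$ and $s_{n-1} = (-1)^{n-1}\sigma^{n-1}$ is precisely what ensures that the non-identity terms coming from $sb^\pr$ match, up to an overall minus sign, those coming from $b^\pr s$ and therefore cancel. This is exactly why the sign $(-1)^n$ is baked into the definition of $s$ to begin with.
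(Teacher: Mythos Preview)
Your proof is correct and follows essentially the same approach as the paper: a direct computation of $sb^\pr + b^\pr s$ using the cosimplicial identities $\sigma^n\partial^i = \partial^i\sigma^{n-1}$ for $i<n$ and $\sigma^n\partial^n = \bsym{1}_{M^n}$, with the sign bookkeeping handled exactly as you describe. The only difference is the order in which the two compositions are written out, which is immaterial.
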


\begin{proof}
$s$ is a contracting homotopy if $b^\pr s + sb^\pr = \bsym{1}_{M^n}$. First,
\[
	b^\pr s = (-1)^{n-1}\sum_{i=0}^{n-1}(-1)^i\partial^i\sigma^{n-1}
\]
and then after applying $\sigma^n\partial^i = \partial^i\sigma^{n-1}$ for $0\le i < n$ to each summand of 
\[
	sb^\pr = (-1)^n\sum_{i=0}^n(-1)^i\sigma^n\partial^i
\]
and noting that when $i=n$ then the summand is $(-1)^n\sigma^n\partial^i = (-1)^n\bsym{1}_{M^n}$ we obtain
\[
	sb^\pr = \bsym{1}_{M^n} + (-1)^{n}\sum_{i=0}^{n-1}(-1)^i\partial^i\sigma^{n-1}
\]
and so it is clear that $b^\pr s + sb^\pr = \bsym{1}_{M^n}$.
\end{proof}

So, the odd columns of $CC(M)$ are acyclic, and therefore do not contribute to $HC^\ast(M)$. We aim, then, to remove these columns and in the process simplify the bicomplex. 

\begin{thm}
\label{B complex}
Denote the following diagram
\[
\begin{tikzcd}
\tvdots &\tvdots& \tvdots   \\   
\arrow{u} M^2 \arrow{r}{B}&\arrow{u} M^1 \arrow{r}{B}&\arrow{u} M^0 \\
\arrow{u}{b} M^1 \arrow{r}{B}& \arrow{u}{b} M^0   \\
\arrow{u}{b} M^0
\end{tikzcd}
\]
by $\mathcal{B}(M)$, where $B = N s(\bsym{1}-t)$. Then $\mathcal{B}(M)$ is a bicomplex and $\mc{T}(\mathcal{B}(M))$ and $\mc{T}(CC(M))$ are quasi-isomorphic.
\end{thm}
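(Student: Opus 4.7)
The plan is to proceed in two stages: first verify that $\mathcal{B}(M)$ really is a bicomplex, then compare it with $CC(M)$ via spectral sequences.

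For the bicomplex check I would verify $B^2 = 0$ and $Bb + bB = 0$ directly. Expanding $B^2 = Ns(\bsym{1}-t)Ns(\bsym{1}-t)$ produces the interior factor $(\bsym{1}-t)N$, which vanishes by Lemma \ref{cocyc idents}(iii). For $Bb + bB$ I would push the vertical $b$ through $B$ using identities (i) and (ii), converting the expression to $N(sb'+b's)(\bsym{1}-t)$; Proposition \ref{cochain complex acyclic} collapses the middle factor to the identity, and (iii) finishes the job.

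For the quasi-isomorphism my strategy is a column filtration of both total complexes. On $\mc{T}(CC(M))$ the $E_0$ page is $CC(M)$ with vertical differential, and Proposition \ref{cochain complex acyclic} tells us the $-b'$-columns are contractible via $s$, so the $E_1$ page vanishes in odd columns and equals $H^\ast(M, b)$ in even columns. Because every pair of adjacent nonzero terms at $E_1$ is separated by a zero column, $d_1 = 0$ automatically. The key calculation is to show that the next differential $d_2 \colon E_2^{p,q} \to E_2^{p+2, q-1}$ is induced precisely by $B$: this is the standard zigzag, where $x$ in an even column is pushed horizontally to $(\bsym{1}-t)x$ in the acyclic next column, pulled back by the contracting homotopy $s$, and then pushed horizontally again by $N$, producing $Ns(\bsym{1}-t)x = Bx$. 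The same filtration on $\mc{T}(\mathcal{B}(M))$ has $E_1 = H^\ast(M, b)$ in every staircase position with $d_1 = B$, so the $E_2$ page of $\mathcal{B}(M)$ and the $E_3$ page of $CC(M)$ coincide under the identification of $CC$'s column $2p$ with $\mathcal{B}$'s column $p$. Both filtrations are first-quadrant and hence convergent.

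To promote this agreement of spectral sequences to a genuine quasi-isomorphism I would construct an explicit filtration-preserving chain map $\phi \colon \mc{T}(\mathcal{B}(M)) \to \mc{T}(CC(M))$ by sending $x \in \mathcal{B}(M)^{p,q} \cong M^{q-p}$ to $x$ in column $2p$ plus a correction term of the form $\pm s(\bsym{1}-t)x$ in column $2p+1$. The correction is chosen so that the stray $(\bsym{1}-t)x$ appearing in $d_{CC}\phi(x)$ is killed via $sb'+b's = \bsym{1}$, leaving only $\pm Ns(\bsym{1}-t)x = \pm Bx$ in column $2p+2$ to match $\phi(Bx)$. By construction $\phi$ realises the identification of spectral sequence pages described above, so the comparison theorem for convergent filtered complexes yields the quasi-isomorphism. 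The chief obstacle throughout is sign bookkeeping: the Koszul signs on the horizontal part of the total differential and the sign $(-1)^n$ inside $s$ have to be aligned carefully with the signs in Lemma \ref{cocyc idents} for the cancellation above to come out exactly, but this is mechanical once the sign conventions for the total complex are fixed.
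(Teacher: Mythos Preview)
Your bicomplex verification is essentially the same as the paper's, and the explicit chain map $\phi$ you construct at the end---sending $x$ in column $p$ to $x$ in column $2p$ together with the correction $s(\bsym{1}-t)x$ in column $2p+1$---is precisely the paper's map $I = \begin{psmallmatrix} \bsym{1} \\ s(\bsym{1}-t) \end{psmallmatrix}$. The routes diverge only in how the quasi-isomorphism is extracted from this map. The paper does not invoke spectral sequences at all: it completes $I$ to a short exact sequence
\[
0 \longrightarrow \mc{T}(\mathcal{B}(M)) \xrightarrow{\ I\ } \mc{T}(CC(M)) \xrightarrow{\ P\ } \mc{T}(CC^{\mc{O}}) \longrightarrow 0,
\]
where $P(m_1,m_2)=m_2 - s(\bsym{1}-t)m_1$ and $\mc{T}(CC^{\mc{O}})$ is the contribution of the odd columns with differential $-b'$. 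Since this cokernel is acyclic by Proposition~\ref{cochain complex acyclic}, the long exact sequence in cohomology finishes the argument immediately. Your spectral-sequence comparison reaches the same conclusion and is a standard alternative, but it carries some extra overhead: the preliminary $E_1$/$E_2$/$E_3$ analysis becomes redundant once $\phi$ is in hand, and you must arrange the filtration on $CC(M)$ (e.g.\ by \emph{pairs} of columns) so that $\phi$ is honestly filtration-preserving before the comparison theorem applies. The paper's short-exact-sequence argument sidesteps that bookkeeping and uses nothing beyond a second application of the contracting homotopy $s$.
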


\begin{proof}
We first demonstrate that $\mathcal{B}(M)$ is a bicomplex. By Lemma~\ref{cocyc idents} and Proposition~\ref{cochain complex acyclic} we have 
\begin{align*}
	Bb &= Nsb^\pr(\bsym{1}-t)\\ &= (N-Nb^\pr s)(\bsym{1}-t)\\ &=-Nb^\pr s(\bsym{1}-t) \\ &=-bB
\end{align*} 
and 
\[
	B^2 = Ns(\bsym{1}-t)Ns(\bsym{1}-t) = 0.
\] 
Therefore $\mathcal{B}(M)$ is a bicomplex.

Now, consider $\mc{T}(CC(M))$. We can write the $n$-th term of $\mc{T}(CC(M))$ as 
\begin{align*}
	\mc{T}(CC(M))^n &= (M^{n}\oplus M^{n -2}\oplus\dots\oplus M^{ n\bmod{2}}) \\
				    &\quad \oplus (M^{ n -1}\oplus M^{ n -3}\oplus\dots\oplus M^{(n-1)\bmod{2}})
\end{align*} 
where the first summand is the contribution of the even columns, and the second is the contribution of the odd columns. Comparing this with the $n$-th term of $\mc{T}(\mathcal{B}(M))$ we see that $\mc{T}(\mathcal{B}(M))^n$ corresponds with the contribution of the even columns. Writing 
\[
	 \mc{T}(CC^{\mc{O}})^n = M^{ n -1}\oplus M^{ n -3}\oplus\dots\oplus M^{(n-1)\bmod{2}}
\]
for the contribution of the odd columns (and omitting the $(M)$ as in the other bicomplexes to save on space), then we have 
\[
	\mc{T}(CC(M))^n = \mc{T}(\mathcal{B}(M))^n \oplus \mc{T}(CC^{\mc{O}})^n.
\]
So we can write $\mc{T}(CC(M))$ as
\[
\begin{tikzcd}[cramped, ampersand replacement=\&, SL/.style = {every label/.style={yshift=#1}}]
\cdots\ar{r} \&[-1.4em] \mc{T}(\mathcal{B}(M))^{n-1} \oplus \mc{T}(CC^{\mc{O}})^{n-1} \ar[SL = 15pt]{r}{\left(\begin{smallmatrix} b & N \\ \bsym{1}-t & -b^\pr \end{smallmatrix}\right)}\&[0.6em] \mc{T}(\mathcal{B}(M))^n \oplus \mc{T}(CC^{\mc{O}})^n \ar{r} \&[-1.4em]  \cdots
\end{tikzcd}
\]
Consider the map 
\[
	\map{I}{(\mc{T}(\mc{B}(M)), b+B)}{\left(\mc{T}(\mc{B}(M))\oplus \mc{T}(CC^{\mc{O}}), \begin{psmallmatrix}  b & N \\ \bsym{1}-t & -b^\pr \end{psmallmatrix}\right)}
\]
given by
\[
	I = \begin{pmatrix} \bsym{1}_{\mc{T}(\mc{B}(M))} \\ s(\bsym{1} - t) \end{pmatrix}.
\]
Note that if $I(m)=0$ then we must have $\bsym{1}_{\mc{T}(\mc{B}(M))}(m)=m=0$, and therefore $I$ is injective. We want to show that $I$ is a cochain map, so with that in mind consider
\[
	\begin{pmatrix} b & N \\ \bsym{1} -t & -b^\pr \end{pmatrix}\circ \begin{pmatrix} \bsym{1}_{\mc{T}(\mc{B}(M))} \\ s(\bsym{1} -t) \end{pmatrix} = \begin{pmatrix} b + B \\ \bsym{1}-t - b^\pr s(\bsym{1} -t)\end{pmatrix} 
\]
and 
\begin{align*}
	\begin{pmatrix} \bsym{1}_{\mc{T}(\mc{B}(M))} \\ s(\bsym{1} -t) \end{pmatrix} \circ (b + B) &= \begin{pmatrix} b + B \\ s(\bsym{1}-t)b + s(\bsym{1} -t)B\end{pmatrix} \\
															&= \begin{pmatrix} b + B \\ s(\bsym{1}-t)b + s(\bsym{1} -t)Ns(\bsym{1}-t)\end{pmatrix}  \\
															&= \begin{pmatrix} b + B \\ s(\bsym{1}-t)b\end{pmatrix}.
\end{align*}
Then $I$ commutes with the differentials if and only if 
\[
	 \bsym{1}-t  =  b^\pr s(\bsym{1} -t) + s(\bsym{1}-t)b.
\]
But from Lemma~\ref{cocyc idents} and Proposition~\ref{cochain complex acyclic} we have 
\begin{align*}
	b^\pr s(\bsym{1} -t) + s(\bsym{1}-t)b &= b^\pr s(\bsym{1} - t) + sb^\pr(\bsym{1}-t)\\
							&= (b^\pr s + sb^\pr)(\bsym{1}-t) \\
							&=\bsym{1}-t
\end{align*}
as required. So $I$ is an injective cochain map. 

Now define 
\[
	\map{P}{\left(\mc{T}(\mc{B}(M))\oplus \mc{T}(CC^{\mc{O}}),\begin{psmallmatrix}  b & N \\ \bsym{1}-t & -b^\pr \end{psmallmatrix}\right)}{(\mc{T}(CC^{\mc{O}}), -b^\pr)}
\] 
as
\[
	P(m_1, m_2) = m_2 - s(\bsym{1} - t)(m_1).
\]
Then $P$ is surjective (for $m\in \mc{T}(CC^{\mc{O}})$ choose $(0, m)\in \mc{T}(\mc{B}(M))\oplus\mc{T}(CC^{\mc{O}})$), and $\im{I} = \ker{(P)}$.  We need to show that $P\begin{psmallmatrix}  b & N \\ \bsym{1}-t & -b^\pr \end{psmallmatrix} = -b^\pr P$, so consider
\begin{align*}
	P\circ \begin{pmatrix}  b & N \\ \bsym{1}-t & -b^\pr \end{pmatrix} & = (\bsym{1}-t) -b^\pr - s(\bsym{1}-t)b -s(\bsym{1}-t)N \\
													&= (\bsym{1}-t) -b^\pr - s(\bsym{1}-t)b
\end{align*}
and
\[
	-b^\pr P = -b^\pr + b^\pr s(\bsym{1}-t).
\]
Then in order for the differentials to commute with $P$ we just need to show
\[
	(\bsym{1}-t) - s(\bsym{1}-t)b = b^\pr s(\bsym{1}-t).
\]
But we have
\begin{align*}
	(\bsym{1}-t) - s(\bsym{1}-t)b & = (\bsym{1}-t) - sb^\pr(\bsym{1}-t) \\
						& = b^\pr s(\bsym{1}-t)
\end{align*}
from Lemma~\ref{cocyc idents} and Proposition~\ref{cochain complex acyclic}. So $P$ is a surjective cochain map. Therefore we have a short exact sequence of cochain complexes 
\[
\begin{tikzcd}
    0 \arrow[r]&  \mc{T}(\mathcal{B}(M)) \arrow{r}{I} & \mc{T}(CC(M)) \arrow{r}{P} &  \mc{T}(CC^{\mc{O}}) \arrow[r] & 0
\end{tikzcd}
\]
But since $(M, b^\pr)$ is acyclic, then so is $(\mc{T}(CC^{\mc{O}}), -b^\pr)$. And so $\mc{T}(\mathcal{B}(M))$ and $\mc{T}(CC(M))$ are quasi-isomorphic.
\end{proof}

\section{Cyclic Cohomology of DGAs}

Now we consider cyclic cohomology of dgas. First note that for a non-graded algebra $A$ the Hochschild coboundary map on $C^n(A)$ is given by
\begin{align*}
	b(f)(a_0\otimes\dots\otimes a_{n}) &= \sum_{i=0}^{n-1}(-1)^if(a_0\otimes\dots\otimes a_ia_{i+1}\otimes\dots\otimes a_{n})\\
						         & \quad +(-1)^{n} f(a_{n}a_0\otimes\dots\otimes a_{n-1})
\end{align*}
and in particular this means that the coface map $\partial^n$ is given by
\[
	\partial^n(f)(a_0\otimes\dots\otimes a_n) = f(a_na_0\otimes\dots\otimes a_{n-1}).
\]
However when $(A, \phi)$ is a dga, then we have the Koszul sign rule to account for gradings, and so in this case we have
\[
	\partial^n(f)(a_0\otimes\dots\otimes a_n) = (-1)^{|a_n|(|a_0| + \dots + |a_{n-1}|)}f(a_na_0\otimes\dots\otimes a_{n-1}).
\]
Similarly we have to modify our cyclic action $t$. We denote the modified cyclic action by $\tau$, and using the Koszul sign rule the cyclic action is given by
\[
	\tau (f)(a_0\otimes\dots\otimes a_n) = (-1)^{|a_n|(|a_0| + \dots + |a_{n-1}|) + n}f(a_n\otimes a_0\otimes\dots\otimes a_{n-1}).
\]
For ease we will set $\gamma_k := |a_k|\sum_{i\ne k}|a_i|$, so then we can rewrite the modified cyclic action and coface map as
\begin{align*}
	&\partial^n(f)(a_0\otimes\dots\otimes a_n) = (-1)^{\gamma_n}f(a_na_0\otimes\dots\otimes a_{n-1})\\
	&\tau (f)(a_0\otimes\dots\otimes a_n) = (-1)^{\gamma_n + n}f(a_n\otimes a_0\otimes\dots\otimes a_{n-1}).
\end{align*}

\begin{prop}
\label{tau is cyclic}
$[n]\mapsto C^n(A)$ with the modified cyclic action $\tau$ is a cocyclic module.
\end{prop}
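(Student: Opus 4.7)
The plan is a direct verification of each of the defining identities in Definition~\ref{cocyclic mod}. Since only $\partial^n$ (among the structure maps of $C^n(A)$) picks up a Koszul sign relative to the ungraded case, the cosimplicial identities either reduce immediately to the ungraded Hochschild case or require a short sign check in those cases that involve $\partial^n$. I would dispatch the cosimplicial identities in this way first so that the remainder of the proof concerns only the cyclic axioms for $\tau$.

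Then I would verify each cyclic axiom by evaluating both sides on a pure tensor $a_0 \otimes \cdots \otimes a_n$. For $\tau\partial^0 = (-1)^n\partial^n$, both sides unfold directly to $(-1)^{\gamma_n+n} f(a_n a_0, a_1, \ldots, a_{n-1})$, since the sign $(-1)^n$ on the right is exactly the $n$ that appears in the exponent of $\tau$, and the $(-1)^{\gamma_n}$ is built into the definition of $\partial^n$. The identity $\tau\sigma^0 = (-1)^n\sigma^n\tau^2$ is a similar direct unfolding: inserting $1 \in A$ contributes no sign (since $|1|=0$), so two applications of $\tau$ at cosimplicial level $n+1$ followed by the $\sigma^n$ insertion give exactly $(-1)^{\gamma_n} f(a_n, 1, a_0, \ldots, a_{n-1})$, matching $\tau\sigma^0$ after accounting for the $(-1)^n$. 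The commutations $\tau\partial^i = -\partial^{i-1}\tau$ and $\tau\sigma^i = -\sigma^{i-1}\tau$ for $1\le i\le n$ split into the easy range $1\le i < n$ (where neither $\partial^i$ nor $\partial^{i-1}$ carries a Koszul correction and the sign $-1$ on the right accounts for the mismatch in the $n$ appearing in $\tau$) and the boundary case $i=n$, where one must track the $(-1)^{\gamma_n}$ from $\partial^n$ against the $\gamma$-factors produced by $\tau$ applied to the rotated tuple.

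The one step that is genuinely a computation rather than a sign check is $\tau^{n+1} = \mathbf{1}$. Iterating $\tau$ on $(a_0, \ldots, a_n)$, the $k$-th application cycles the element currently in position $n$ (which, traced back, is $a_{n-k+1}$) to the front, producing a sign $(-1)^{|a_{n-k+1}|\sum_{i\ne n-k+1}|a_i| + n}$. Summing over $k = 1, \ldots, n+1$, the total sign is
\[
(-1)^{\sum_{j=0}^{n} \gamma_j + (n+1)n} \;=\; (-1)^{2\sum_{i<j}|a_i||a_j| + n(n+1)} \;=\; +1,
\]
since each off-diagonal product $|a_i||a_j|$ appears exactly twice in $\sum_j \gamma_j$ and $n(n+1)$ is even. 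After $n+1$ cycles the permutation returns to the identity, so $\tau^{n+1} = \mathbf{1}_{C^n(A)}$.

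The main obstacle is precisely this bookkeeping: one must be careful that in iterating $\tau$ the symbol $\gamma_k$ is re-interpreted relative to the \emph{current} tuple at each step, and that the total degree contribution from the $n$'s (which come from the cosimplicial level, not the tensor factors) lines up with the total $\gamma$-contribution. Once the symmetry of $\sum_j \gamma_j$ is noted, all remaining verifications are straightforward applications of the Koszul sign rule.
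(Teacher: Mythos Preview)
Your proposal is correct and follows essentially the same approach as the paper: a direct verification of the cocyclic identities by evaluating on pure tensors and tracking Koszul signs, with the key computation being $\tau^{n+1}=\bsym{1}$ via the parity of $\sum_j\gamma_j$. The paper checks $\tau^{n+1}=\bsym{1}$ and $\tau\sigma^0=(-1)^n\sigma^n\tau^2$ explicitly and leaves the remaining identities to inspection; you are slightly more thorough in also isolating the boundary case $i=n$ for $\tau\partial^i$ and in making the $n(n+1)$ contribution to the total sign explicit.
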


\begin{proof}
We aim to demonstrate that $\tau$ satisfies the cocyclic identities from Definition~\ref{cocyclic mod}. We will explicitly check 
\begin{align*}
	&\tau^{n+1} = \bsym{1}_{C^n(A)} \\
	&\tau\sig^0 = (-1)^n\sig^n\tau^2
\end{align*}
and the other cocyclic identities can be easily verified from simply checking both sides. 

Consider $\tau^{n+1}$, then we have 
\[
	\tau^{n+1}f(a_0\otimes\dots\otimes a_n) = (-1)^{\sum_{i=0}^n\gamma_i}f(a_0\otimes\dots\otimes a_n)
\]
and
\[
	\sum_{i=0}^n\gamma_i = \sum_{i=0}^n\bigg(|a_i|\sum_{j\ne i}|a_j|\bigg).
\]
This sum consists of the products of the degrees of two distinct elements in various combinations. Fix two such distinct elements, say $a_r$ and $a_t$. The product $|a_r||a_t|$ only appears in the summands $i=r$ and $i=t$, i.e. the summands
\begin{align*}
	&|a_r|\sum_{j\ne r}|a_j| \\
	&|a_t|\sum_{j\ne t}|a_j|.
\end{align*}
In other words $|a_r||a_t|$ appears only twice in $\sum_{i=0}^n\gamma_i$. So we have
\[
	\sum_{i=0}^n\gamma_i = 2\sum_{i\ne j}|a_i||a_j|
\]
and therefore $\tau^{n+1} =  \bsym{1}_{C^n(A)}$.

Now consider $\tau\sig^0$, then 
\begin{align*}
	\tau\sig^0(f)(a_0\otimes\dots\otimes a_n) &=(-1)^{\gamma_n + n} \sig^0(f)(a_n\otimes a_0\dots\otimes a_{n-1})\\
								&= (-1)^{\gamma_n + n}f(a_n\otimes 1\otimes a_0\otimes \dots\otimes a_{n-1})
\end{align*} 
and 
\[
	(-1)^n\sig^n\tau^2(f)(a_0\otimes\dots\otimes a_n)  = (-1)^n\tau^2(f)(a_0\otimes\dots\otimes a_n\otimes 1).
\]
When applying $\tau$ for the first time we have the sign $(-1)^{\gamma_{n+1} + n+1}$. In this case $\gamma_{n+1} = |1|\sum_{i\ne k}|a_i| = 0$ and therefore
\begin{align*}
	(-1)^n\sig^n\tau^2(f)(a_0\otimes\dots\otimes a_n) & = -\tau(f)(1\otimes a_0\otimes\dots\otimes a_n) \\
										& = (-1)^{\gamma_{n+1} + n}f(a_n\otimes 1\otimes a_0\otimes\dots\otimes a_{n-1}).
\end{align*}
In this case $\gamma_{n+1} = |a_n|\sum_{i\ne k} |a_i|$ and 
\[
	\sum_{i\ne k} |a_i| = |1| + |a_0| + \dots + |a_{n-1}| = |a_0| + \dots + |a_{n-1}|
\]
so $\gamma_{n+1} = \gamma_n$ and therefore
\[
	(-1)^n\sig^n\tau^2(f)(a_0\otimes\dots\otimes a_n) = (-1)^{\gamma_{n} + n}f(a_n\otimes 1\otimes a_0\otimes\dots\otimes a_{n-1})
\]
Hence $\tau\sig^0 = (-1)^n\sig^n\tau^2$.
\end{proof}

So we have a bicomplex $\mc{B}(A)$. However, each $C^n(A)$ is a dg-module with grading
\[
	C^n(A)^m = \Hom{\Bbbk}{(A^{\otimes n +1})^m}{\Bbbk}
\]
and differential $\del(f) = (-1)^mf\phi^{\otimes n+1}$. The modified cyclic action $\tau$ naturally extends to $C^n(A)^m$. 

\begin{lem}
\label{delB is Bdel}
$\del B = B\del$.
\end{lem}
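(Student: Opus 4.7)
The plan is to reduce the identity $\delta B = B \delta$ to the commutation of $\delta$ with each of the constituents of $B = N s (\bsym{1} - \tau)$. Since $N = \sum_{i=0}^{n} \tau^{i}$, $s = (-1)^{n}\sigma^{n}$, and $B$ is built by composing $\tau$, $\sigma^{n}$, and scalar combinations thereof, it is enough to verify the two identities $\delta \tau = \tau \delta$ and $\delta \sigma^{n} = \sigma^{n} \delta$ on $C^{\ast}(A)$. Because $\tau$ and $\sigma^{n}$ preserve the internal degree $m$ of $f \in C^{n}(A)^{m}$, the prefactor $(-1)^{m}$ in the formula $\delta(f) = (-1)^{m} f \circ \phi^{\otimes n+1}$ is the same on both sides of each identity, so everything comes down to showing that precomposition with the tensor-product differential is compatible with cyclic permutation and with insertion of the unit.

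For $\sigma^{n}$, the argument is essentially trivial: $\sigma^{n}(f)$ is obtained by plugging the unit $1 \in A^{0}$ into the last tensor slot of $f$, and the contribution of $\phi^{\otimes n+2}$ at that slot is $a_{0}\otimes\cdots\otimes a_{n}\otimes \phi(1) = 0$. The remaining summands in $\phi^{\otimes n+2}(a_{0}\otimes\cdots\otimes a_{n}\otimes 1)$ carry no extra Koszul sign relative to $\phi^{\otimes n+1}(a_{0}\otimes\cdots\otimes a_{n})\otimes 1$ because $|1|=0$, giving $\delta\sigma^{n} = \sigma^{n}\delta$ immediately.

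The substantive step is $\delta \tau = \tau \delta$, which is exactly the statement that the Koszul sign $(-1)^{\gamma_{n}+n}$ built into $\tau$ is compatible with the graded Leibniz differential $\phi^{\otimes n+1}$. On the one side, $(\tau f)\circ \phi^{\otimes n+1}$ first picks up, for each summand in $\phi^{\otimes n+1}(a_{0}\otimes\cdots\otimes a_{n})$, the usual sign $(-1)^{|a_{0}|+\cdots+|a_{i-1}|}$ from differentiating the $i$-th slot, then applies $\tau$ with its sign computed using the degrees $(|a_{0}|,\ldots,|a_{i}|+1,\ldots,|a_{n}|)$. On the other side, $\tau(f\circ\phi^{\otimes n+1})$ applies $\tau$ first (with the original degrees), and then differentiates in the shifted positions. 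The discrepancy between the two is precisely the change in $\gamma_{n}$ when $|a_{i}|$ is replaced by $|a_{i}|+1$, namely an extra $\sum_{j\ne i}|a_{j}|$ in the exponent, and this is exactly the sign accumulated as the differentiated factor is transported through the other tensor factors under the cyclic shift. A direct case analysis (separating $i < n$ from $i = n$, corresponding to whether or not the differentiated factor is the one being cycled to the front) confirms the matching.

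The only delicate part is this sign bookkeeping for $\tau$; it is of the same flavour as the verification of $\tau^{n+1} = \bsym{1}$ already carried out in Proposition~\ref{tau is cyclic}, and no new structural input is required beyond $\phi(1)=0$ and the Leibniz rule for $\phi^{\otimes n+1}$. Combining $\delta\tau = \tau\delta$ and $\delta\sigma^{n} = \sigma^{n}\delta$ gives $\delta B = B\delta$ by direct composition.
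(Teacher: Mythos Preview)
Your proposal is correct and follows essentially the same approach as the paper: reduce $\delta B = B\delta$ to $\delta\tau = \tau\delta$ and $\delta\sigma^{n} = \sigma^{n}\delta$, handle $\sigma^{n}$ via $\phi(1)=0$ and $|1|=0$, and verify the $\tau$ case by tracking how the Koszul sign $\gamma_{n}$ shifts under $|a_{i}|\mapsto|a_{i}|+1$, with the same $i<n$ versus $i=n$ case split. The paper writes out the sign computations explicitly (introducing auxiliary notations $\omega_{k}$, $\gamma_{k}^{\prime}$, $\gamma_{k}^{\prime\prime}$), whereas you describe them conceptually, but the argument is the same.
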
 

\begin{proof}
Since $B = Ns(\bsym{1}-\tau)$, and $N = \sum_{i=0}^n\tau^i$ and $s = (-1)^n\sig^n$, then it's enough to show that $\del\tau = \tau\del$ and $\del\sig^n = \sig^n\del$. 

We will first show that $\del\tau = \tau\del$. We have 
\[
	\del(f)(a_0\otimes\dots\otimes a_n) = (-1)^m\sum_{i=0}^n(-1)^{\omega_{i-1}}f(a_0\otimes\dots\otimes \phi(a_i)\otimes\dots\otimes a_n)
\]
where $\omega_{-1} = 0$ and $\omega_k = \sum_{i=0}^k|a_i|$. We will also set 
\begin{align*}
	&\gamma_k^\pr := |a_k|\bigg(1+\sum_{i\ne k}|a_i|\bigg) = |a_k| + \gamma_k\\
	&\gamma_k^{\pr\pr} := (|a_k| + 1)\sum_{i\ne k}|a_i| = \gamma_k + \sum_{i\ne k}|a_i|.
\end{align*} 
In particular we have $\gamma_n^{\pr\pr} = \gamma_n + \omega_{n-1}$. Now consider $\del\tau$, then we have
\begin{align*}
	\del\tau(f)(a_0\otimes&\dots\otimes a_n)  = (-1)^m\sum_{i=0}^n(-1)^{\omega_{i-1}}\tau(f)(a_0\otimes\dots\otimes \phi(a_i)\otimes\dots\otimes a_n) \\
							    & = (-1)^{m+n}\bigg(\sum_{i=0}^{n-1}(-1)^{\omega_{i-1} + \gamma_n^\pr}f(a_n\otimes a_0\otimes\dots\otimes \phi(a_i)\otimes\dots\otimes a_{n-1}) \\
							    &\quad + (-1)^{\omega_{n-1} + \gamma_n^{\pr\pr}}f(\phi(a_n)\otimes a_0\otimes\dots\otimes a_{n-1})\bigg).
\end{align*}
Note that 
\[
	\omega_{n-1}+ \gamma_n^{\pr\pr} = \omega_{n-1} + \gamma_n + \omega_{n-1}  =  2\omega_{n-1} + \gamma_n
\]
and therefore $(-1)^{\omega_{n-1} + \gamma_n^{\pr\pr}} = (-1)^{\gamma_n}$. So
\begin{align*}
	\del\tau(f)(a_0\otimes&\dots\otimes a_n)  \\
					 & = (-1)^{m+n+ \gamma_n}\bigg(\sum_{i=0}^{n-1}(-1)^{\omega_{i-1}+|a_n|}f(a_n\otimes a_0\otimes\dots\otimes \phi(a_i)\otimes\dots\otimes a_{n-1}) \\
							    &\quad + f(\phi(a_n)\otimes a_0\otimes\dots\otimes a_{n-1})\bigg).
\end{align*}
Now consider $\tau\del$, then we have
\begin{align*}
	\tau\del(f)(a_0\otimes&\dots\otimes a_n) = (-1)^{\gamma_n + n}\del(f)(a_n\otimes a_0\otimes\dot\otimes a_{n-1})\\
					& = (-1)^{m+n+\gamma_n}\bigg(\sum_{i=0}^{n-1}(-1)^{\omega^\pr_i}f(a_n\otimes a_0\otimes\dots\otimes \phi(a_i)\otimes\dots\otimes a_{n-1})\\
					&\quad + f(\phi(a_n)\otimes a_0\otimes\dots\otimes a_{n-1})\bigg) 
\end{align*}
where $\omega^\pr_i = |a_n| + |a_0|+\dots+|a_{i-1}| = \omega_{i-1} + |a_n|$. Therefore $\del\tau = \tau\del$. 

Now consider $\sig^n\del$, then we have 
\begin{align*}
	\sig^n\del(f)(a_0\otimes\dots\otimes a_n) &= \del(f)(a_0\otimes\dots\otimes a_n\otimes 1) \\
								& = (-1)^m\sum_{i=0}^n(-1)^{\omega_{i-1}}f(a_0\otimes\dots\otimes \phi(a_i)\otimes\dots\otimes a_n\otimes 1) \\
								&\quad + (-1)^{m+\omega_{n}}f(a_0\otimes\dots\otimes a_n\otimes \phi(1)) \\
								& = (-1)^m\sum_{i=0}^n(-1)^{\omega_{i-1}}f(a_0\otimes\dots\otimes \phi(a_i)\otimes\dots\otimes a_n\otimes 1) \\
								& = \del\sig^n(f)(a_0\otimes\dots\otimes a_n).
\end{align*}
We can kill the term involving $\phi(1)$ because the unit in $A$ is a $0$-cocycle, since $\ker{\phi}$ is a graded-subalgebra of $A$ and therefore must contain the multiplicative identity in the $0$-th degree. Therefore $\del\sig^n = \sig^n\del$ and so $\del B = B\del$ as required.
\end{proof}

Therefore $\mc{B}(A)$ expands as a tricomplex, which we denote by $EC(A)$, with differentials $b, B$ and $\pm\del$, i.e, the sign of $\del$ alternates.

\begin{defn}
\label{tri cyc cohom of dga}
The \emph{cyclic cohomology $HC^*_{dg}(A)$} of the dg-algebra $A$ is the cohomology of the total complex of the tricomplex $EC(A)$, so that 
\[
	HC^n_{dg}(A) = H^n(\mc{T}(EC(A)))
\]
for each $n$.
\end{defn}

\section{Spectral Sequences for the Cyclic Cohomology of a DGA}

There is currently little literature that explores spectral sequences for cyclic cohomology of dgas. The focus in the literature appears to be on spectral sequences for periodic cyclic homology (see~\cite{Kaledin2017} for example), which we do not discuss here.   

When we set coefficients as $A^\ast = \Hom{\Bbbk}{A}{\Bbbk}$ then the Hochschild complex $C^*(A) := C^*(A, A^\ast)$ is given by
\[
	C^n(A)=\Hom{\Bbbk}{A^{\otimes n +1}}{\Bbbk}
\]
For a dga $(A, \phi)$ then $C^*(A)$ expands as the bicomplex $EH(A) := EH(A, A^\ast)$ given by
\[
	EH(A)^{n,m} = \Hom{\Bbbk}{(A^{\otimes n + 1})^m}{\Bbbk}
\]
and we have a differential $\map{\del}{EH(A)^{n,m}}{EH(A)^{n,m+1}}$ given by $\del(f) = (-1)^mf\phi^{\otimes n+1}$ where $\phi^{\otimes n+1}$ is the differential on $A^{\otimes n+1}$. 

Now define the map $\map{\lambda}{C^n(A)}{C^n(A)}$ as 
\[
	\lam f(a_0\otimes\dots\otimes a_n) = (-1)^nf(a_n\otimes a_0 \otimes\dots\otimes a_{n-1})
\]
then we set 
\[
	C^n_\lam(A) := \ker{(\bsym{1}-\lam)}
\]
and $C_{\lam}(A)$ is a subcomplex of the Hochschild cochain complex. Now, $\lam$ induces a map $\map{\Lam}{EH(A)^{n,m}}{EH(A)^{n,m}}$ defined in the same way, except due to the Koszul sign rule we need to introduce an additional sign given by
\[
	\Lam f(a_0\otimes\dots\otimes a_n) = (-1)^{|a_n|(|a_0| +\dots + |a_{n-1}|) + n}f(a_n\otimes a_0 \otimes\dots\otimes a_{n-1})
\]
and we define $C^n_{\Lam}(A)^m$ as
\[
	C^n_{\Lam}(A)^m = \ker{(\bsym{1}-\Lam)}
\]
Clearly $C^n_{\Lam}(A)^m\subset EH(A)^{n,m}$. Furthermore, taking $C_{\Lam}(A)$ as a bigraded $\Bbbk$-module, we have the following lemma. 

\begin{lem}
$(\mc{T}(C_{\Lam}(A)), d)$ is a subcomplex of $(\mc{T}(EH(A)), d)$.
\end{lem}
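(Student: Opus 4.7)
The plan is to verify that both bidifferentials $b$ and $\del$ of $EH(A)$ preserve the subspace $C_\Lam(A)$, and then conclude that the total differential $d$ on $\mc{T}(EH(A))$ restricts to $\mc{T}(C_\Lam(A))$. Concretely, unpacking the definition, it suffices to prove that for every $f\in C^n_{\Lam}(A)^m$ (i.e.\ $(\bsym{1}-\Lam)f = 0$), both $b(f)\in C^{n+1}_{\Lam}(A)^m$ and $\del(f)\in C^{n}_{\Lam}(A)^{m+1}$ — that is, each of $b(f)$ and $\del(f)$ lies in $\ker(\bsym{1}-\Lam)$.

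For the horizontal direction, I would invoke Proposition~\ref{tau is cyclic}, which tells us that $[n]\mapsto C^n(A)$ together with the modified cyclic action $\tau = \Lam$ forms a cocyclic module. Consequently Lemma~\ref{cocyc idents}(i) applies verbatim to $\Lam$ in place of $t$, giving the operator identity
\[
(\bsym{1}-\Lam)\, b \;=\; b^{\pr}\,(\bsym{1}-\Lam).
\]
Applying this to an element $f\in\ker(\bsym{1}-\Lam)$ immediately yields $(\bsym{1}-\Lam)b(f)=b^\pr\cdot 0 = 0$, so $b(f)\in C^{n+1}_\Lam(A)^m$ as required.

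For the vertical direction, I would reuse the computation from the proof of Lemma~\ref{delB is Bdel}, where the identity $\del\tau = \tau\del$ was established by directly matching the Koszul signs coming from $\gamma_n$ against those coming from $\omega_{i-1}$. Since $\Lam$ is precisely the operator $\tau$ viewed on the internally graded pieces $EH(A)^{n,m}$, the same calculation gives $\del\Lam = \Lam\del$. Hence for any $f\in C^n_\Lam(A)^m$ we have $\Lam\del(f) = \del\Lam(f) = \del(f)$, so $\del(f)\in C^n_\Lam(A)^{m+1}$.

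Combining these two facts, the total differential of $\mc{T}(EH(A))$ — which on the $(n,m)$ summand is a signed sum of $b$ and $\del$ — sends elements of $C_\Lam(A)$ into $C_\Lam(A)$, so $(\mc{T}(C_\Lam(A)), d)$ is indeed a subcomplex. I do not anticipate a genuine obstacle here: the content of the lemma is precisely the conjunction of two identities that have already been verified earlier in the paper, and the only care required is to note that the identities $(\bsym{1}-t)b = b^{\pr}(\bsym{1}-t)$ and $\del\tau = \tau\del$ transport without modification to the bigraded setting where $\tau$ is written as $\Lam$.
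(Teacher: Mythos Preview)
Your proposal is correct and follows essentially the same approach as the paper: both verify that $b$ preserves $\ker(\bsym{1}-\Lam)$ via the cocyclic identity $(\bsym{1}-\Lam)b=b^{\pr}(\bsym{1}-\Lam)$ from Lemma~\ref{cocyc idents}, and that $\del$ preserves it as well. The only cosmetic difference is that for the $\del$-direction the paper writes a one-line direct computation $(\bsym{1}-\Lam)\del(f)=(-1)^m(\bsym{1}-\Lam)f\,\phi^{\otimes n+1}=0$, whereas you cite the identity $\del\tau=\tau\del$ established in the proof of Lemma~\ref{delB is Bdel}; these amount to the same thing.
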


\begin{proof}
Since $C^n_{\Lam}(A)^m\subset EH(A)^{n,m}$ then we immediately have $\mc{T}(C_{\Lam}(A))^n\subset\teha{A}^n$. And since $\Lam$ is equivalent to $\tau$ then by Proposition~\ref{tau is cyclic} $\Lam$ is also a cyclic action satisfying the cocyclic identities. Therefore by Lemma~\ref{cocyc idents} we have $(\bsym{1}-\Lam)b = b^\pr(\bsym{1}-\Lam)$. Then if $f\in C^n_\lam(A)$ then 
\[
	(\bsym{1}-\Lam)b(f) = b^\pr (\bsym{1}-\Lam)(f) = 0
\]
and therefore $b(f)\in\ker{(\bsym{1}-\Lam)} = C^{n+1}_\Lam(A)$. Therefore $b(C^n_{\Lam}(A)^m)\subset C^{n+1}_{\Lam}(A)^m$. Taking $f\in C^n_{\Lam}(A)^m$ we see that
\[
	(\bsym{1}-\Lam)\del(f) = (-1)^m(\bsym{1}-\Lam)f\phi^{\otimes n+1} = 0	
\]
and so $\del(f)\in C^n_{\Lam}(A)^{m+1}$. Therefore $\del(C^n_{\Lam}(A)^m)\subset C^n_{\Lam}(A)^{m+1}$. Hence $d = \sum_{p+q=n}b +(-1)^p\del$ is also a well defined differential on $\mc{T}(C_{\Lam}(A))$ and thus $(\mc{T}(C_{\Lam}(A)), d)$ is a subcomplex of $(\mc{T}(EH(A)), d)$.
\end{proof}

Therefore $C_{\Lam}(A)$ is a bicomplex
\[
\begin{tikzcd}
\vdots & \vdots & \vdots \\
C^0_{\Lam}(A)^2\arrow{u} \arrow{r}{b} & C^1_{\Lam}(A)^2 \arrow{u} \arrow{r}{b} & C^2_{\Lam}(A)^2 \arrow{u} \arrow{r} & \cdots \\
C^0_{\Lam}(A)^1 \arrow{u}{\del} \arrow{r}{b} & C^1_{\Lam}(A)^1 \arrow{u}{-\del} \arrow{r}{b} & C^2_{\Lam}(A)^1 \arrow{u}{\del} \arrow{r} & \cdots \\
C^0_{\Lam}(A)^0 \arrow{u}{\del} \arrow{r}{b} & C^1_{\Lam}(A)^0 \arrow{u}{-\del} \arrow{r}{b} &  C^2_{\Lam}(A)^0 \arrow{u}{\del} \arrow{r} & \cdots
\end{tikzcd}
\] 
and so we can define the cyclic cohomology as follows. 

\begin{defn}
\label{lam hc of dga}
For a dga $(A, \phi)$, the \emph{cyclic cohomology} $HC^*_{dg}(A)$ is given by
\[
	HC^*_{dg}(A) = H^*(\mc{T}(C_{\Lam}(A))).
\]
\end{defn}

Note that this definition differs from Definition~\ref{tri cyc cohom of dga}. As we will see in Theorem~\ref{teca iso to tcalam} then when $A$ is a dga over a field $\Bbbk$ of characteristic $0$ then these two definitions are equivalent. We will briefly recall Definition~\ref{tri cyc cohom of dga}, and recall that for a dga $(A, \phi)$ the cyclic bicomplex $\mc{B}(A)$
\[
\begin{tikzcd}
\tvdots &\tvdots &\tvdots   \\   
\arrow{u} C^2(A) \arrow{r}{B}&\arrow{u} C^1(A) \arrow{r}{B}&\arrow{u} C^0(A) \\
\arrow{u}{b} C^1(A)  \arrow{r}{B}& \arrow{u}{b} C^0(A)    \\
\arrow{u}{b} C^0(A) 
\end{tikzcd}
\]
expands as the tricomplex $EC(A)$ given by
\[
\begin{tikzpicture}[commutative diagrams/every diagram, on top/.style={preaction={draw=white,-,line width=#1}},
on top/.default=6pt]
\node[rotate=-20]  (1) at (0.25,8.25) {$\ddots$}   ;
\node  (2) at (1,8.25) {$\tvdots$}  ;
\node[rotate=-20]  (23) at (3.25,8.25) {$\ddots$}   ;
\node  (3) at (4,8.25) {$\tvdots$}  ;
\node[rotate=-20]  (24) at (6.25,8.25) {$\ddots$}   ;
\node  (4) at (7,8.25) {$\tvdots$}  ;
\node  (5) at (1,7) {$C^2(A)^1$}   ;
\node  (6) at (4,7) {$C^1(A)^1$}   ;
\node  (7) at (7,7) {$C^0(A)^1$}   ;
\node[rotate=-20]  (8) at (0,6) {$\ddots$}   ;
\node  (9) at (2,6.25) {$\tvdots$}   ;
\node[rotate=-20] (10) at (3,6) {$\ddots$}   ;
\node  (11) at (5,6.25) {$\tvdots$}   ;
\node[rotate=-20] (12) at (6,6) {$\ddots$}   ;
\node  (13) at (2,5) {$C^1(A)^1$}   ;
\node  (14) at (5,5) {$C^0(A)^1$}   ;
\node  (15) at (1,4) {$C^2(A)^0$}   ;
\node  (16) at (3,4.25) {$\tvdots$}   ;
\node  (17) at (4,4) {$C^1(A)^0$}   ;
\node  (18) at (7,4) {$C^0(A)^0$}   ;
\node  (19) at (3,3) {$C^0(A)^1$}   ;
\node  (20) at (2,2) {$C^1(A)^0$}   ;
\node  (21) at (5,2) {$C^0(A)^0$}   ;
\node  (22) at (3,0) {$C^0(A)^0$}   ;

\path[commutative diagrams/.cd, every arrow, every label]
       (22) edge node {$b$} (20)
       (20) edge node {$b$} (15)
       (20) edge node[swap] {$B$} (21)
       (15) edge (8)
       (15) edge (5)
       (15) edge (17)
       (21) edge (17)
       (17) edge (18)
       (13) edge (5)
       (5) edge (6)
       (6) edge (7)
       (17) edge (6)
       (18) edge (7)
       (14) edge (6)
       (19) edge (16)
       (13) edge (9)
       (14) edge (11)
       (17) edge (10)
       (18) edge (12)
       (5) edge (1)
       (5) edge (2)
       (6) edge (3)
       (7) edge (4)
       (6) edge (23)
       (7) edge (24)
       
       (22) edge[on top] node {$\del$} (19)
       (19) edge[on top] (13)
       (20) edge[on top] node {$-\del$} (13)
       (13) edge[on top] (14)
       (21) edge[on top] (14);
\end{tikzpicture}
\]
where $C^n(A)^m =  \Hom{\Bbbk}{(A^{\otimes n + 1})^m}{\Bbbk}$ and $\map{\del}{C^n(A)^{m}}{C^n(A)^{m+1}}$ is given by $\del(f) = (-1)^mf\phi^{\otimes n+1}$. Note that this tricomplex exists for dgas over any arbitrary ring. See~\cite{GorokhovskyAlexander2002Scca} for more details on the construction of this cyclic complex.

Restating Definition~\ref{tri cyc cohom of dga} we have:

\begin{defn}
\label{hc defn with cyc tricomp}
The cyclic cohomology $HC^*_{dg}(A)$ of $(A, \phi)$ is given by
\[
	HC^*_{dg}(A) = H^*(\mc{T}(EC(A))).
\]
\end{defn}
Note how, when indexing $EC(A)$ by $EC(A)^{p,q,r}$, then each $EC(A)^{p, \ast,\ast}$ is the Hochschild bicomplex $EH(A)$ but shifted up in the $q$ direction by $p$. This leads to an interesting decomposition of each component of $\mc{T}(EC(A))$ as follows.
\begin{lem}
\label{int decomp}
For each $n$ we have
\[
\mc{T}(EC(A))^n = 
\begin{dcases}
\bigoplus_{i=0}^{\frac{n}{2}}\mc{T}(EH(A))^{2i}&$n$\mbox{ is even}\\
\bigoplus_{i=0}^{\frac{n-1}{2}}\mc{T}(EH(A))^{2i+1}&$n$\mbox{ is odd}
\end{dcases}
\]
\end{lem}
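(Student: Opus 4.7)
The plan is to prove the lemma by a careful reindexing of the tricomplex $EC(A)$ and then collecting terms along the ``cyclic direction'' $p$. I would begin by fixing the indexing convention $EC(A)^{p,q,r}$ suggested in the paragraph preceding the lemma: the $p$ index counts the cyclic columns of $\mc{B}(A)$, the $r$ index is the internal dg-degree, and $q$ is the ``Hochschild vertical'' coordinate shifted up by $p$. Concretely, from the picture of $\mc{B}(A)$ together with the fact that the $p$-th column is the Hochschild bicomplex $EH(A)$ shifted up by $p$ in $q$, one has
\[
    EC(A)^{p,q,r} = \begin{cases} EH(A)^{q-p,\,r} & \text{if } q\ge p,\\ 0 & \text{otherwise.}\end{cases}
\]
This identification is the content of the ``observation'' in the paragraph above the lemma and will be the starting point.

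Next I would write out the $n$-th piece of the total complex directly from the definition:
\[
    \mc{T}(EC(A))^n \;=\; \bigoplus_{p+q+r=n}EC(A)^{p,q,r}\;=\;\bigoplus_{\substack{p+q+r=n\\ q\ge p}}EH(A)^{q-p,\,r}.
\]
The key step is a change of variables. Setting $i:=q-p$ (which is $\ge 0$) and keeping $p,r$, the constraint $p+q+r=n$ becomes $i+r=n-2p$, while $p$ ranges over all non-negative integers with $n-2p\ge 0$, i.e.\ $0\le p\le \lfloor n/2\rfloor$. Grouping the sum according to $p$ gives
\[
    \mc{T}(EC(A))^n \;=\; \bigoplus_{p=0}^{\lfloor n/2\rfloor}\;\bigoplus_{i+r=n-2p}EH(A)^{i,r} \;=\; \bigoplus_{p=0}^{\lfloor n/2\rfloor}\mc{T}(EH(A))^{n-2p}.
\]

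Finally I would split into cases. When $n=2k$ is even, $p$ runs from $0$ to $k=n/2$ and the exponents $n-2p$ enumerate $2k,2k-2,\dots,2,0$, which after reindexing $i:=k-p$ gives $\bigoplus_{i=0}^{n/2}\mc{T}(EH(A))^{2i}$; when $n=2k+1$ is odd, $p$ runs from $0$ to $k=(n-1)/2$ and the exponents are the odd numbers $1,3,\dots,2k+1$, giving $\bigoplus_{i=0}^{(n-1)/2}\mc{T}(EH(A))^{2i+1}$. This matches the two cases in the statement of the lemma. The whole argument is essentially a bookkeeping exercise, so I do not anticipate a genuine obstacle; the one thing to be careful with is fixing the indexing convention (especially the shift by $p$) at the outset so that the change of variables $i=q-p$ produces the correct Hochschild total-degree $n-2p$.
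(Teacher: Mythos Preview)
Your proposal is correct and follows essentially the same approach as the paper: both arguments fix $p$ and observe that the contribution of $EC(A)^{p,\ast,\ast}$ to $\mc{T}(EC(A))^n$ is $\mc{T}(EH(A))^{n-2p}$, the paper phrasing this geometrically as how the plane $p+q+r=n$ intersects the shifted Hochschild bicomplex, while you carry out the same computation via the explicit change of variables $i=q-p$. The content is identical; your version is simply a more formal rendering of the paper's picture.
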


\begin{proof}
Each component $\mc{T}(EC(A))^n$ is made up of the points of $EC(A)$ that intersect with the plane $p+q+r = n$. Inspecting how this plane intersects with $EC(A)$ we see that the contribution of the Hochschild bicomplex $EC(A)^{0,\ast,\ast}$ to $\mc{T}(EC(A))^n$ is $\bigoplus_{q+r=n}EC^{0,q,r} = \mc{T}(EH(A))^n$. However, since $EC(A)^{1,\ast,\ast}$ is the Hochschild bicomplex but shifted up in the $q$ direction by $1$ then by the time the $p+q+r=n$ place intersects with $EC(A)^{1,\ast,\ast}$ it's moved down $2$ in the $r$ direction, meaning that the contribution of $EC^{1,\ast,\ast}$ to $\mc{T}(EC(A))^n$ is $\bigoplus_{q+r=n-2}EC(A)^{1,q,r} = \mc{T}(EH(A))^{n-2}$. We continue in this way until we reach the final contribution, which depending on whether $n$ is odd or even will either be $C^1(A)^0\oplus C^0(A)^1 = \mc{T}(EH(A))^1$ if odd, or $C^0(A)^0 = \mc{T}(EH(A))^0$ if even.
\end{proof}

Now, from Theorem~\ref{B complex} we have the quasi-isomorphism $\map{I}{\mc{T}(\mc{B}(A))}{\mc{T}(CC(A))}$ when forgetting the differential graded structure of $A$. Remembering the dg structure of $A$ then of course $\mc{B}(A)$ expands into $EC(A)$, and $CC(A)$ also expands into a tricomplex, which we will denote by $CC_{dg}(A)$.

Just as $\map{\lam}{C^n(A)}{C^n(A)}$ induces $\map{\Lam}{C^n(A)^m}{C^n(A)^m}$, then we also have that $\map{N}{C^n(A)}{C^n(A)}$ induces $\map{\nabla}{C^n(A)^m}{C^n(A)^m}$ given by
\begin{equation}
\label{nabla}
	\nabla = \sum_{i=0}^n\Lam^i.
\end{equation} 
So then $CC_{dg}(A)$ is given by 
\[
\begin{tikzcd}[cramped, sep=small]
\ddots &\tvdots & \ddots& \tvdots &\ddots&\tvdots & \\
& C^1(A)^2 \ar{rr} \ar{u}\ar{ul}     &                                  \tvdots                                  & C^1(A)^2 \ar{rr}\ar{ul}   \ar{u} \ar[from=dd]         &                            \tvdots                                      & C^1(A)^2 \ar{u} \ar{r} \ar[from=dd]   \ar{ul} & \mathrlap{\cdots}\tvdots &\\
\ddots&                                      & C^0(A)^2 \ar{ul} \ar[crossing over]{rr}   \ar{u}  &                              & C^0(A)^2 \ar{ul} \ar[crossing over]{rr} \ar{u}                                      &                                   & C^0(A)^2  \ar{ul}  \ar{u}   \ar{r} & \cdots    \\
&C^1(A)^1 \ar{rr} \ar{uu} \ar{ul} &                                                                                  & C^1(A)^1  \ar[from = dd] \ar{rr}            &                                    & C^1(A)^1  \ar{r}\ar[from=dd]           &       \cdots                                                     \\
\ddots &                                     & C^0(A)^1 \ar[crossing over]{rr} \ar{ul} \ar[crossing over]{uu} &                                               & C^0(A)^1 \ar[crossing over]{rr} \ar{ul} \ar[crossing over]{uu} &                                   & C^0(A)^1 \ar{ul} \ar{uu}    \ar{r} & \cdots          \\
&C^1(A)^0 \ar{uu}{-\del} \ar{rr}\ar{ul} &                                                                                                           & C^1(A)^0 \ar{rr} &                                                                                  & C^1(A)^0 \ar{r}&                                                       \cdots     \\
 &                                             &C^0(A)^0 \ar{ul}{b} \ar[crossing over, near start]{uu}{\del} \ar[swap]{rr}{\bsym{1}-\Lambda} &                                               & C^0(A)^0 \ar{ul}{-b^\pr} \ar[crossing over, near start]{uu}{\del} \ar[swap]{rr}{\nabla}  &       & C^0(A)^0 \ar{ul} \ar{uu}\ar{r} & \cdots \\
\end{tikzcd}
\]

\begin{thm}
\label{teca quasi to tccdga}
$\mc{T}(EC(A))$ is quasi-isomorphic to $\mc{T}(CC_{dg}(A))$.
\end{thm}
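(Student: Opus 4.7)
The plan is to lift the proof of Theorem~\ref{B complex} to the dg setting by carefully tracking the extra differential $\del$. Recall that Theorem~\ref{B complex} constructs explicit maps $I$ and $P$ fitting into a short exact sequence
\[
0 \to \mc{T}(\mc{B}(M)) \xrightarrow{I} \mc{T}(CC(M)) \xrightarrow{P} \mc{T}(CC^{\mc{O}}) \to 0,
\]
and concludes by observing that the quotient is acyclic, since $(M, b^\pr)$ is acyclic by Proposition~\ref{cochain complex acyclic}.

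First I would construct the analogous short exact sequence
\[
0 \to \mc{T}(EC(A)) \xrightarrow{I} \mc{T}(CC_{dg}(A)) \xrightarrow{P} \mc{T}(CC_{dg}^{\mc{O}}(A)) \to 0,
\]
where $CC_{dg}^{\mc{O}}(A)$ denotes the sub-bicomplex of $CC_{dg}(A)$ consisting of the odd $(\bsym{1}-\Lam, \nabla)$-columns, equipped with the $-b^\pr$ and $\del$ differentials, and where $I$ and $P$ are defined by exactly the same formulas as in Theorem~\ref{B complex}, now using the extended operators $\Lam$, $\nabla$, $s$ on $C^n(A)^m$. The verification that $I$ and $P$ commute with $b + B$ is identical to the non-dg case, because the cocyclic identities of Lemma~\ref{cocyc idents} remain valid for the modified operators (by Proposition~\ref{tau is cyclic}, which establishes the cocyclic identities for $\Lam$). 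The only additional work is therefore to check that $I$ and $P$ commute with $\del$, which reduces to the commutativities $\del s = s\del$ and $\del \Lam = \Lam\del$, both established as intermediate steps in the proof of Lemma~\ref{delB is Bdel}; commutativity of $\del$ with $\nabla = \sum_i \Lam^i$ is then immediate.

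Finally I would show that $\mc{T}(CC_{dg}^{\mc{O}}(A))$ is acyclic. As a bicomplex, $CC_{dg}^{\mc{O}}(A)$ has Hochschild differential $-b^\pr$ and internal differential $\del$; for each fixed internal degree $m$, the row $(C^\ast(A)^m, -b^\pr)$ is acyclic by Proposition~\ref{cochain complex acyclic}, since the contracting homotopy $s = (-1)^n\sig^n$ preserves the internal degree (it merely inserts the degree-zero unit $1\in A$). The spectral sequence of the double complex filtered by rows then has $E_1 = 0$, and converges since the filtration is bounded in each total degree, giving acyclicity of the total complex. The desired quasi-isomorphism then follows from the long exact sequence in cohomology.

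The main obstacle will be the bookkeeping needed to verify that $I$ and $P$ remain genuine cochain maps with respect to the full total differential of the tricomplex, since one must simultaneously track the alternating signs between tricomplex directions, the Koszul-modified forms of $\Lam$ and $\nabla$, and the interactions with $\del$. Once those commutativity checks are in place, the acyclicity argument is a standard spectral sequence computation.
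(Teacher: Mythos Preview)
Your proposal is correct and follows essentially the same route as the paper: you build the same short exact sequence via the maps $I$ and $P$ (the paper calls them $\Psi$ and $\Phi$), and you invoke exactly the same commutativity facts $\del\Lam=\Lam\del$ and $\del s=s\del$ from Lemma~\ref{delB is Bdel} to handle the extra differential.

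The only point of departure is the acyclicity argument for $\mc{T}(CC_{dg}^{\mc{O}}(A))$. The paper writes down an explicit contracting homotopy $\begin{psmallmatrix}0 & -s\\ -s & 0\end{psmallmatrix}$ for the total complex $(\mc{T}(EH^\pr(A)), -b^\pr\pm\del)$, using $sb^\pr+b^\pr s=\bsym{1}$ together with $s\del=\del s$. You instead run the row-filtration spectral sequence and observe that $E_1=0$ because each row $(C^\ast(A)^m,-b^\pr)$ is already contractible. Both arguments work; yours is the standard and slightly quicker move, while the paper's has the virtue of being completely explicit and avoids any convergence discussion. Since the bicomplex here is first-quadrant your convergence claim is fine, so there is no gap.
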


\begin{proof}
If we index $CC_{dg}(A)$ by $CC_{dg}(A)^{p,q,r}$ then for each even value of $p$ the bicomplex $CC_{dg}(A)^{p, *, *}$ is $EH(A)$. Similarly, for the odd values of $p$ the bicomplex $CC_{dg}(A)^{p, *, *}$ is $EH(A)$ except the horizontal differential is $-b^\pr$; in this case we will denote $CC_{dg}(A)^{p, *, *}$ as $EH^\pr(A)$.

By inspecting the components of $\mc{T}(CC_{dg}(A))$ we see that 
\[
	\mc{T}(CC_{dg}(A))^n = \mc{T}(EH(A))^{n}\oplus \mc{T}(EH^\pr(A))^{n-1} \oplus \mc{T}(EH(A))^{n-2}\oplus  \dots
\] 
The final summand of this direct sum is $C^0(A)^0$, however whether it comes from $EH(A)$ or $EH^\pr(A)$ depends on the parity of $n$. If $n$ is even then it comes from $EH(A)$, and if $n$ is odd then it's from $EH^\pr(A)$. Therefore we have
\[
\mc{T}(CC_{dg}(A))^n = 
\begin{dcases}
\left(\bigoplus_{i=0}^{\frac{n}{2}}\mc{T}(EH(A))^{2i}\right)\oplus\left(\bigoplus_{i=1}^{\frac{n}{2}}\mc{T}(EH^\pr(A))^{2i-1}\right) &$n$\mbox{ is even}\\
\left(\bigoplus_{i=0}^{\frac{n-1}{2}}\mc{T}(EH(A))^{2i+1}\right)\oplus\left(\bigoplus_{i=0}^{\frac{n-1}{2}}\mc{T}(EH^\pr(A))^{2i}\right)&$n$\mbox{ is odd}
\end{dcases}
\]
and so from Lemma~\ref{int decomp} we have $\mc{T}(EC(A))\subset \mc{T}(CC_{dg}(A))$. We will denote the remaining components of $\mc{T}(CC_{dg}(A))^n$ by $\mc{T}(CC_{dg}^{\mc{O}})^n$ since it is made up of the contributions from the bicomplexes of $CC_{dg}(A)$ at the odd values of $p$. So we have
\[
\mc{T}(CC_{dg}^{\mc{O}})^n = 
\begin{dcases}
\bigoplus_{i=1}^{\frac{n}{2}}\mc{T}(EH^\pr(A))^{2i-1}&$n$\mbox{ is even}\\
\bigoplus_{i=0}^{\frac{n-1}{2}}\mc{T}(EH^\pr(A))^{2i}&$n$\mbox{ is odd}
\end{dcases}
\]
and therefore
\[
	\mc{T}(CC_{dg}(A))^n = \mc{T}(EC(A))^n\oplus \mc{T}(CC_{dg}^{\mc{O}})^n.
\]
Comparing with Theorem~\ref{B complex} then $\mc{T}(EC(A))$ corresponds to $\mc{T}(\mc{B}(A))$ and $\mc{T}(CC_{dg}^{\mc{O}})$ corresponds to $\mc{T}(CC^{\mc{O}})$. So we can write the differential of $\mc{T}(CC_{dg}(A))$ as 
\[
	d = \begin{pmatrix} b\pm\del & \nabla \\ \bsym{1}-\Lam & -b^\pr\pm\del \end{pmatrix}.
\]
Now, $I$ induces the map 
\[
	\map{\Psi}{(\mc{T}(EC(A)), b+B\pm\del)}{\left(\mc{T}(EC(A))\oplus\mc{T}(CC_{dg}^{\mc{O}}), \begin{psmallmatrix} b\pm\del & \nabla \\ \bsym{1}-\Lam & -b^\pr\pm\del \end{psmallmatrix}\right)}
\]
given by
\[
	\Psi = \begin{pmatrix} \bsym{1}_{\mc{T}(EC(A))} \\ s(\bsym{1} - \Lam) \end{pmatrix}
\]
where $s = (-1)^n\sig^n$. In the same way that $I$ is injective then so is $\Psi$. We can perform the same calculation as in Theorem~\ref{B complex} to show that $\Psi$ is a cochain map except in this case we also need to show that
\[
	s(\bsym{1}-\Lam)\del = \del s(\bsym{1}-\Lam)
\]
which is indeed the case due to the proof of Lemma~\ref{delB is Bdel}. 

Similarly, $P$ from Theorem~\ref{B complex} induces a map 
\[
	\map{\Phi}{\left(\mc{T}(EC(A))\oplus\mc{T}(CC_{dg}^{\mc{O}}), \begin{psmallmatrix} b\pm\del & \nabla \\ \bsym{1}-\Lam & -b^\pr\pm\del \end{psmallmatrix}\right)}{(\mc{T}(CC_{dg}^{\mc{O}}),  - b^\pr\pm\del)}
\]
given by 
\[
	\Phi(f, g) = g - s(\bsym{1} - \Lam)(f)
\]
and as before $\Phi$ is a surjective cochain map such that $\im{\Psi} = \ker{(\Phi)}$ by the same reasoning used in Theorem~\ref{B complex} and Lemma~\ref{delB is Bdel}. Therefore we have a short exact sequence of cochain complexes
\[
\begin{tikzcd}
    0 \arrow[r]&  \mc{T}(EC(A)) \arrow{r}{\Psi} & \mc{T}(CC_{dg}(A)) \arrow{r}{\Phi} &  \mc{T}(CC^{\mc{O}}_{dg}) \arrow[r] & 0
\end{tikzcd}
\]
If we can show that $(\mc{T}(CC_{dg}^{\mc{O}}),  - b^\pr\pm\del)$ is acyclic then we will have proven our theorem. With that in mind, consider $EH^\pr(A)$ given by  
\[
\begin{tikzcd}
\vdots & \vdots & \vdots \\
C^0(A)^2\arrow{u} \arrow{r}{-b^\pr} & C^1(A)^2 \arrow{u} \arrow{r}{-b^\pr} & C^2(A)^2 \arrow{u} \arrow{r} & \cdots \\
C^0(A)^1 \arrow{u}{\del} \arrow{r}{-b^\pr} & C^1(A)^1 \arrow{u}{-\del} \arrow{r}{-b^\pr} & C^2(A)^1 \arrow{u}{\del} \arrow{r} & \cdots \\
C^0(A)^0 \arrow{u}{\del} \arrow{r}{-b^\pr} & C^1(A)^0 \arrow{u}{-\del} \arrow{r}{-b^\pr} &  C^2(A)^0 \arrow{u}{\del} \arrow{r} & \cdots
\end{tikzcd}
\] 
Denote the odd and even columns of $EH^\pr(A)$ by $EH^{\pr\mc{O}}$ and $EH^{\pr\mc{E}}$ respectively, then we can write $\mc{T}(EH^\pr(A))$ as  
\[
\begin{tikzcd}[cramped, ampersand replacement=\&, SL/.style = {every label/.style={yshift=#1}}]
\cdots\ar{r} \&[-1.4em] \mc{T}(EH^{\pr\mc{E}})^{n} \oplus \mc{T}(EH^{\pr\mc{O}})^{n} \ar[SL = 15pt]{r}{\left(\begin{smallmatrix} \del & -b^\pr \\ -b^\pr & -\del \end{smallmatrix}\right)}\&[0.6em] \mc{T}(EH^{\pr\mc{E}})^{n+1} \oplus\mc{T}(EH^{\pr\mc{O}})^{n+1} \ar{r} \&[-1.4em]  \cdots
\end{tikzcd}
\]
By Proposition~\ref{cochain complex acyclic} we have $sb^\pr + b^\pr s = \bsym{1}_{C^n(A)^m}$ and from the proof of Lemma~\ref{delB is Bdel} we have $\del\sig^n = \sig^n\del$ which implies $\del s = s\del$.  Therefore 
\begin{align*}
	-(-b^\pr + \del)s -s(-b^\pr - \del) & = b^\pr s -\del s +sb^\pr +s\del \\
							& = \bsym{1}_{C^n(A)^m} - \del s + \del s \\
							& = \bsym{1}_{C^n(A)^m}.
\end{align*}
Now,
\[
	\begin{pmatrix} 0 & -s \\ -s & 0 \end{pmatrix} \circ \begin{pmatrix} \del & -b^\pr \\ -b^\pr & -\del \end{pmatrix}  = \begin{pmatrix} -s(-b^\pr - \del) \\ -s(-b^\pr +\del) \end{pmatrix}
\]
and 
\[
	\begin{pmatrix} \del & -b^\pr \\ -b^\pr & -\del \end{pmatrix}\circ\begin{pmatrix} 0 & -s \\ -s & 0 \end{pmatrix}   = \begin{pmatrix} -(-b^\pr + \del)s \\ -(-b^\pr -\del)s \end{pmatrix}
\]
and therefore
\begin{align*}
\begin{pmatrix} \del & -b^\pr \\ -b^\pr & -\del \end{pmatrix}\circ\begin{pmatrix} 0 & -s \\ -s & 0 \end{pmatrix} + \begin{pmatrix} 0 & -s \\ -s & 0 \end{pmatrix} \circ & \begin{pmatrix} \del & -b^\pr \\ -b^\pr & -\del \end{pmatrix} \\ &=  \begin{pmatrix} -(-b^\pr + \del)s-s(-b^\pr - \del) \\ -(-b^\pr -\del)s -s(-b^\pr +\del) \end{pmatrix} \\
								& = \begin{pmatrix} \bsym{1}_{\mc{T}(EH^{\pr\mc{E}})^{n}} \\ \bsym{1}_{\mc{T}(EH^{\pr\mc{O}})^{n}} \end{pmatrix} = \bsym{1}_{\mc{T}(EH^\pr(A))^n}.
\end{align*}
So $\begin{pmatrix}0 & -s \\ -s & 0\end{pmatrix}$ is a contracting homotopy of $(\mc{T}(EH^\pr(A)), -b^\pr \pm\del)$ and thus it is acyclic. This implies that $(\mc{T}(CC_{dg}^{\mc{O}}),  - b^\pr\pm\del)$ is acyclic, and so we are done.
\end{proof}

Recall that we are working over a field $\Bbbk$ of characteristic $0$. Equivalently we may say that $\mb{Q}\subset \Bbbk$. Indeed, consider the ring homomorphism $\mb{Z}\to \Bbbk$ which sends $n\mapsto n\cdot 1$. Then $\Bbbk$ is of characteristic $0$ if and only if this map is injective. Therefore all non-zero elements of $\mb{Z}$ are sent to invertible elements of $\Bbbk$, and so it can be extended to an injective homomorphism $\mb{Q}\hookrightarrow \Bbbk$ which sends $m/n \mapsto (m\cdot 1)(n\cdot 1)^{-1}$. This fact is important for the next theorem.

\begin{thm}
\label{teca iso to tcalam}
$\mc{T}(EC(A))$ is quasi-isomorphic to $\mc{T}(C_{\Lam}(A))$.
\end{thm}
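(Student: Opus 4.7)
The plan is to combine Theorem~\ref{teca quasi to tccdga}, which gives $\mc{T}(EC(A)) \simeq \mc{T}(CC_{dg}(A))$, with a direct quasi-isomorphism $\mc{T}(C_\Lam(A)) \simeq \mc{T}(CC_{dg}(A))$. The natural candidate for this second quasi-isomorphism is the inclusion $\iota : \mc{T}(C_\Lam(A)) \hookrightarrow \mc{T}(CC_{dg}(A))$ that sends each $C^n_\Lam(A)^m$ to the copy of $C^n(A)^m$ sitting at position $(0,n,m)$ of the cyclic tricomplex, i.e.\ into the $p=0$ column (where $p$ indexes the cyclic direction). This $\iota$ is a well-defined cochain map: since $\bsym{1}-\Lam$ vanishes on $C_\Lam(A)$, no part of the total differential produces an element at $p \geq 1$, and the $b$ and $\pm\del$ components agree on the $p=0$ slice with the differentials on $\mc{T}(C_\Lam(A))$.

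To prove $\iota$ is a quasi-isomorphism I will use spectral sequences arising from filtering both complexes decreasingly by Hochschild degree $q$. This filtration is finite in each total degree so convergence is automatic. On $\mc{T}(C_\Lam(A))$ the graded pieces are $E_0^k = (C^k_\Lam(A)^*, \del)$, giving $E_1^k = H^*(C^k_\Lam(A)^*, \del)$ with $d_1 = b$. On $\mc{T}(CC_{dg}(A))$ the graded piece at filtration $k$ is the bicomplex $CC_{dg}(A)^{*,k,*}$ in $(p,r)$ with differentials $d_p$ (alternately $\bsym{1}-\Lam$ and $\nabla$) and $d_r = \pm\del$; I will compute its cohomology via an inner spectral sequence filtered by $r$. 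The $r$-graded pieces are the $p$-direction rows
\[
C^k(A)^r \xrightarrow{\bsym{1}-\Lam} C^k(A)^r \xrightarrow{\nabla} C^k(A)^r \xrightarrow{\bsym{1}-\Lam} \cdots
\]
and everything rests on showing that these rows have cohomology $C^k_\Lam(A)^r$ at $p=0$ and zero at $p \geq 1$.

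This final exactness step is the main obstacle and is where characteristic $0$ is essential. Proposition~\ref{tau is cyclic} gives $\Lam^{k+1} = \bsym{1}$ on $C^k(A)^r$, so $\tfrac{1}{k+1}\nabla$ is an idempotent projection onto the $\Lam$-invariants; from this one reads off $\ker(\bsym{1}-\Lam) = \im{\nabla}$, and the equality $\ker{\nabla} = \im(\bsym{1}-\Lam)$ follows by exhibiting an explicit preimage (for instance $y = -\tfrac{1}{k+1}\sum_{i=0}^k i\,\Lam^i x$ satisfies $(\bsym{1}-\Lam)y = x$ whenever $\nabla x = 0$). Once this is in hand the inner spectral sequence collapses to $H^*(C^k_\Lam(A)^*, \del)$ concentrated at $p=0$, so the outer $E_1^k$ for $\mc{T}(CC_{dg}(A))$ agrees with that for $\mc{T}(C_\Lam(A))$, and $\iota$ induces the identity on $E_1$. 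A standard spectral sequence comparison then delivers the quasi-isomorphism, completing the proof. The filtration bookkeeping and comparison are routine; the characteristic $0$ exactness of the $p$-rows is the one nontrivial ingredient.
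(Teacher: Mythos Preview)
Your proposal is correct and follows essentially the same route as the paper: reduce via Theorem~\ref{teca quasi to tccdga} to comparing $\mc{T}(C_\Lam(A))$ with $\mc{T}(CC_{dg}(A))$, and then use characteristic~$0$ to show that each $p$-direction row $C^k(A)^r \xrightarrow{\bsym{1}-\Lam} C^k(A)^r \xrightarrow{\nabla} \cdots$ is exact except at $p=0$ where the cohomology is $C^k_\Lam(A)^r$. The paper uses the homotopy $H=\tfrac{1}{n+1}\sum_{i=0}^n(i+1)\Lam^i$ (your $y$ is the variant $-\tfrac{1}{k+1}\sum i\Lam^i x$, which differs from $-Hx$ by a multiple of $\nabla x$), and then simply asserts that ``the only thing contributing to the cohomology'' is the $p=0$ face; your spectral sequence comparison makes that final step explicit and is a cleaner way to justify it.
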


\begin{proof}
This proof is a generalisation of \cite[Theorem 3.8.1]{khalkCycCohom} in the sense that we are proving it for dgas over a field $\Bbbk$ of characteristic $0$, rather than non-graded $\mb{C}$-algebras.

By Theorem~\ref{teca quasi to tccdga} $\mc{T}(EC(A))$ is quasi-isomorphic to ${\mc{T}(CC_{dg}(A))}$. Then it will be enough to demonstrate that $\mc{T}(C_{\Lam}(A))$ is quasi-isomorphic to $\mc{T}(CC_{dg}(A))$.

Consider the map $\map{H}{C^n(A)}{C^n(A)}$ given by $H=\frac{1}{n+1}\sum_{i=0}^n (i+1)\Lam^i$, then recalling the definition of $\nabla$ (see~(\ref{nabla}) above) we have
\[
	\frac{1}{n+1}\nabla - (\bsym{1}-\Lam)H = \bsym{1}_{C^n(A)^m}.
\]
Clearly, this expression only makes sense if $\mb{Q}\subset \Bbbk$. But by the discussion above, we see that this is true for fields of characteristic $0$. Now, if $f\in \ker{(\nabla)}$ then we have
\[
	\frac{1}{n+1}\nabla(f) - (\bsym{1}-\Lam)H(f) = f \Rightarrow (\bsym{1}-\Lam)(-H(f)) = f
\]
and so $\ker{(\nabla)}\subseteq \im{\bsym{1}-\Lam}$. 

If $f\in\ker{(\bsym{1}-\Lam)}$ then $\Lam(f) = f$ and therefore $\nabla(f) = (n+1)f$. And so choosing $g = \frac{1}{n+1}f$ then $f= \nabla(g)$. So we have $\ker{(\bsym{1}-\Lam)}\subseteq \im{\nabla}$. Therefore every cochain complex of $CC_{dg}(A)$ starting at $CC_{dg}(A)^{0, p, q}$ is exact, except at first position. And so the only thing contributing to the cohomology of $\mc{T}(CC_{dg}(A))$ is 
\[
\begin{tikzcd}[sep = small]
\ddots &\tvdots & \ddots& \tvdots &\ddots&\tvdots & \\
& C^1_\Lam(A)^2 \ar{rr} \ar{u}\ar{ul}     &                                  \tvdots                                  & 0 \ar{rr}\ar{ul}   \ar{u} \ar[from=dd]         &                            \tvdots                                      & 0 \ar{u} \ar{r} \ar[from=dd]   \ar{ul} & \mathrlap{\cdots}\tvdots &\\
\ddots&                                      & C^0_\Lam(A)^2 \ar{ul} \ar[crossing over]{rr}   \ar{u}  &                              &0 \ar{ul} \ar[crossing over]{rr} \ar{u}                                      &                                   & 0  \ar{ul}  \ar{u}   \ar{r} & \cdots    \\
&C^1_\Lam(A)^1 \ar{rr} \ar{uu} \ar{ul} &                                                                                  & 0  \ar[from = dd] \ar{rr}            &                                    & 0  \ar{r}\ar[from=dd]           &       \cdots                                                     \\
\ddots &                                     & C^0_\Lam(A)^1 \ar[crossing over]{rr} \ar{ul} \ar[crossing over]{uu} &                                               & 0 \ar[crossing over]{rr} \ar{ul} \ar[crossing over]{uu} &                                   & 0 \ar{ul} \ar{uu}    \ar{r} & \cdots          \\
&C^1_\Lam(A)^0 \ar{uu}{-\del} \ar{rr}\ar{ul} &                                                                                                           &0 \ar{rr} &                                                                                  & 0 \ar{r}&                                                       \cdots     \\
 &                                             &C^0_\Lam(A)^0 \ar{ul}{b} \ar[crossing over, near start]{uu}{\del} \ar[swap]{rr}&                                               & 0 \ar{ul} \ar[crossing over]{uu} \ar[swap]{rr}  &       & 0 \ar{ul} \ar{uu}\ar{r} & \cdots \\
\end{tikzcd}
\]
It is clear then that $\mc{T}(CC_{dg}(A))$ is quasi-isomorphic to $\mc{T}(C_\Lam(A))$.
\end{proof}

Since $(T(C_{\Lam}(A)), d)$ is a subcomplex of $(\mc{T}(EH(A)), d)$ then as in the non-dga case we have a long exact sequence between $HC^*_{dg}$ and $HH^*_{dg}$
\[
\begin{tikzcd}[cramped, sep=small]
\cdots \ar{r} & HH^n_{dg}(A) \ar{r} & HC_{dg}^{n-1}(A) \ar{r} & HC_{dg}^{n+1}(A) \ar{r} & HH_{dg}^{n+1}(A) \ar{r} & \cdots
\end{tikzcd}
\]
arising from the short exact sequence of complexes
\[
\begin{tikzcd}
0 \ar{r} & \mc{T}(C_\Lam(A)) \ar{r}{i} &\mc{T}(EH(A)) \ar{r}{\pi} &\mc{T}(EH(A))/\mc{T}(C_\Lam(A))\ar{r} & 0
\end{tikzcd}
\]
Now, consider the vertical filtration $F$ on $\mc{T}(EH(A))$ that gives rise to the characteristic spectral sequence $\prescript{}{c}{E}$ for Hochschild cohomology defined by Neumann and Szymik~\cite{neumann2016spectral}. In this case then the characteristic spectral sequence has a second page given by
\[
	\prescript{}{c}{E^{s,t}_2} \simeq HH^s_{gr}(H^*(A)^{\otimes s}, \Sigma^tH^*(\Hom{k}{A}{k})) \Rightarrow HH^{s+t}_{dg}(A)
\]
\begin{thm}
For every dg $\Bbbk$-algebra $(A, \phi)$ where $k$ is a field of characteristc $0$ there is a convergent spectral sequence $E$ calculating $HC^*_{dg}(A)$ such that there is a morphism of spectral sequences $\map{f}{E}{\prescript{}{c}{E}}$.
\end{thm}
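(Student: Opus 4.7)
The plan is to obtain $E$ by pulling back the vertical filtration $F$ along the inclusion $\map{i}{\mc{T}(C_\Lam(A))}{\mc{T}(EH(A))}$, and then to deduce the morphism of spectral sequences from the functoriality of the spectral sequence of a filtered complex. Since $C_\Lam^n(A)^m \subset EH(A)^{n,m}$ as bigraded submodules, this pullback is straightforward to describe explicitly.

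More precisely, I would first recall that the characteristic filtration $F$ on $\mc{T}(EH(A))$ is the vertical filtration
\[
    F^p\mc{T}(EH(A))^n = \bigoplus_{\substack{q+r=n\\ q\ge p}} EH(A)^{r,q},
\]
which is bounded below (by $0$) and exhaustive, and define
\[
    F^p\mc{T}(C_\Lam(A))^n := F^p\mc{T}(EH(A))^n \cap \mc{T}(C_\Lam(A))^n = \bigoplus_{\substack{q+r=n\\ q\ge p}} C^r_\Lam(A)^q.
\]
Because the total differential $d = b \pm \del$ restricts to $\mc{T}(C_\Lam(A))$ (as already shown), and because $b$ preserves the vertical degree while $\del$ raises it by one, the filtration $F^p\mc{T}(C_\Lam(A))$ is stable under $d$, i.e.\ it is a filtration of cochain complexes. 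This produces a spectral sequence $E$ whose $E_0$-page is $\mathrm{gr}_F \mc{T}(C_\Lam(A))$ with horizontal differential induced by $b$.

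By construction, $i$ is a map of filtered cochain complexes, so it induces a morphism $\map{f}{E}{\prescript{}{c}{E}}$ on each page, compatible with all differentials and with the identifications $E_\infty \Rightarrow \mathrm{gr}\, HC^*_{dg}(A)$ and $\prescript{}{c}{E}_\infty \Rightarrow \mathrm{gr}\, HH^*_{dg}(A)$; by Definition~\ref{lam hc of dga} the abutment is indeed $HC^*_{dg}(A)$. Convergence of $E$ follows from the standard convergence result for filtrations that are bounded below and exhaustive on a cochain complex bounded below in total degree: the vertical degree is $\ge 0$ on each $\mc{T}(C_\Lam(A))^n$, which makes $F$ bounded on each total degree (equivalently, the filtration is regular), so $E$ converges strongly to $HC^*_{dg}(A)$.

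The main obstacle I anticipate is not any of the structural verifications above, which are essentially formal, but rather organising the bookkeeping of the vertical filtration consistently with the two different sign conventions in use (the alternating $\pm\del$ on the columns of $EH(A)$, and the Koszul signs built into $\Lam$) so that $F^p\mc{T}(C_\Lam(A))$ is genuinely a filtered subcomplex and $i$ is genuinely filtration-preserving on the nose. Once these sign compatibilities are pinned down, the theorem reduces to the general fact that an inclusion of filtered complexes induces a morphism of the associated spectral sequences.
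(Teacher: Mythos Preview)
Your proposal is correct and follows essentially the same route as the paper: restrict the vertical filtration $F$ to the subcomplex $\mc{T}(C_\Lam(A))$, observe that the filtration is canonically bounded so the resulting spectral sequence converges to $HC^*_{dg}(A)$, and use that the inclusion $i$ is a filtered cochain map to obtain the morphism $\map{f}{E}{\prescript{}{c}{E}}$. Your anticipated obstacle about sign conventions is already handled by the lemma establishing that $(\mc{T}(C_\Lam(A)),d)$ is a subcomplex of $(\mc{T}(EH(A)),d)$, so no further bookkeeping is needed.
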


\begin{proof}
Applying the vertical filtration $F$ to $\mc{T}(C_\Lam(A))$ this clearly induces a convergent spectral sequence $E$ calculating $HC^*_{dg}(A)$ for the same reasons that it induces a convergent spectral sequence for $HH^*_{dg}$. That is, the filtration is canonically bounded and so by~\cite[Theorem 2.6]{McClearyJohn2001Augt} the associated spectral sequence converges. Furthermore since $C^n_{\Lam}(A)^m\subset EH(A)^{n,m}$ then we immediately have that $F^s\mc{T}(C_\Lam(A))^n\subset F^s\mc{T}(EH(A))^n$. It is clear as well that the inclusion map $\map{i}{ \mc{T}(C_\Lam(A))}{\mc{T}(EH(A))}$ satisfies $id=di$ and therefore by~\cite[Theorem 3.5]{McClearyJohn2001Augt} this induces a morphism of spectral sequences $\map{f}{E}{\prescript{}{c}{E}}$.
\end{proof}

\begin{rem}
The same argument can be used to show that there is a spectral sequence that calculates $HC^*_{dg}(A)$ such that there is a morphism to the forgetful spectral sequence $\prescript{}{f}{E}$ as defined in~\cite{neumann2016spectral}. The difference being that the horizontal filtration is used instead of the vertical filtration. 
\end{rem}

From~\cite[Theorem 4.3]{specseqpolyphim2025} we have $6$ convergent spectral sequences for $HC^*_{dg}(A)$ coming from applying the canonically bounded filtrations associated with the restrictions on the indices of $EC^{p,q,r}$, and for dg $\Bbbk$-algebras we have an additional $2$ sequences coming from $\mc{T}(C_\Lam(A))$. Therefore we have the following result.

\begin{thm}
For dgas $(A, \phi)$ over an arbitrary ring $\Bbbk$ there are at least $6$ convergent spectral sequences for calculating $HC^*_{dg}(A)$. If $\Bbbk$ is a field of characteristic $0$ then there are at least $8$.  
\end{thm}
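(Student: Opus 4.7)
The plan is to combine the two models of $HC^*_{dg}(A)$ developed in the preceding sections. Over an arbitrary ring $\Bbbk$ we have $HC^*_{dg}(A) = H^*(\mc{T}(EC(A)))$ from Definition~\ref{hc defn with cyc tricomp}, while over a field of characteristic $0$ we additionally have $HC^*_{dg}(A) = H^*(\mc{T}(C_\Lam(A)))$ via Theorem~\ref{teca iso to tcalam}. Each of these two complexes is equipped with a canonical family of filtrations coming from its indexing, and the aim is simply to count the distinct filtrations these produce.

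First I would invoke [Theorem 4.3]{specseqpolyphim2025}, which for a tricomplex $EC^{p,q,r}$ produces six convergent spectral sequences, one for each canonically bounded filtration obtained from the $\binom{3}{1}+\binom{3}{2}=6$ ways to restrict the three indices (three filtrations by one index at a time, and three by the sum of two indices). Applied to $EC(A)$, this immediately yields six convergent spectral sequences converging to $HC^*_{dg}(A)$, and since $EC(A)$ is defined over an arbitrary ring this settles the first half of the statement.

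Next, under the extra assumption $\mathrm{char}(\Bbbk)=0$, Theorem~\ref{teca iso to tcalam} identifies $HC^*_{dg}(A)$ with $H^*(\mc{T}(C_\Lam(A)))$, where $C_\Lam(A)$ is a bicomplex (with bidegree $(n,m)$) rather than a tricomplex. On this bicomplex the vertical ($m$-)filtration and the horizontal ($n$-)filtration are each canonically bounded, so each induces a convergent spectral sequence with abutment $HC^*_{dg}(A)$ by [Theorem 2.6]{McClearyJohn2001Augt}, exactly as in the proof of the preceding theorem. This contributes two further sequences, bringing the total to eight.

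The only subtle step, and the one I would flag as the main obstacle, is ensuring that these two additional sequences are genuinely new rather than coinciding with two of the six obtained from $EC(A)$. This is however clear from the structural difference between the two complexes: $\mc{T}(C_\Lam(A))$ is a subcomplex of $\mc{T}(EH(A))$ built from the $\Lam$-invariants, whereas the six filtrations on $EC(A)$ already involve the full Hochschild groups $C^n(A)^m$ together with the cyclic direction, so their associated graded pieces on $E_1$ differ in shape from those of the two $C_\Lam$-filtrations. Hence the counts add and the bound of at least $8$ follows.
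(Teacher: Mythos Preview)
Your proposal is correct and follows essentially the same approach as the paper: invoke \cite[Theorem 4.3]{specseqpolyphim2025} on the tricomplex $EC(A)$ to obtain six convergent spectral sequences over any ring, and then in characteristic $0$ use Theorem~\ref{teca iso to tcalam} together with the two canonical filtrations on the bicomplex $C_\Lam(A)$ to obtain two more. The paper does not actually provide a formal proof block for this theorem; it simply states it as a consequence of the preceding discussion, so your write-up is in fact more detailed than the paper's own treatment. Your additional paragraph arguing that the two $C_\Lam$-filtrations yield genuinely new spectral sequences is a point the paper does not address at all; given that the statement only claims ``at least'' $8$, the paper evidently does not regard this as requiring justification, though your instinct to flag it is reasonable.
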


Now, consider~\cite[Lemma 4.4]{specseqpolyphim2025}, then the filtration $F_1$ on $\mc{T}(EC)$ is the filtration
\[
	F^s_1\mc{T}(EC)^n  = \bigoplus_{\substack{p+q+r =n\\ p\ge s}}EC^{p,q,r} 
\]
and this gives rise to a convergent spectral sequence $\prescript{}{1}{E}$ that converges to $HC^*_{dg}(A)$. Then by the same lemma each column $\prescript{}{1}{E^{p,*}_0}$ is the total complex of
\[
\begin{tikzcd}[cramped, sep=small]
\tvdots                             &                                  & \tvdots                                   & \tvdots               & \tvdots                &  \tvdots &  \\   
\arrow{r}0 \arrow{u}      & \arrow{r}\cdots        & \arrow{r} 0  \arrow{u}          & C^0(A)^2 \ar{r}\ar{u} & C^1(A)^2 \ar{r}\ar{u} & C^2(A)^2 \ar{r}\ar{u} & \cdots \\
 \arrow{r}0 \arrow{u}    &\arrow{r}\cdots   &  \arrow{r} 0 \arrow{u}                 &C^0(A)^1 \ar{r}\ar{u} & C^1(A)^1 \ar{r} \ar{u}& C^2(A)^1 \ar{r}\ar{u} & \cdots  \\
 \arrow{r}0\arrow{u}         & \arrow{r} \cdots        &\arrow{r} 0 \arrow{u}       & C^0(A)^0 \ar{r}[swap]{b}\ar{u}{\del} & C^1(A)^0 \ar{r}[swap]{b}\ar{u}{-\del} & C^2(A)^0 \ar{r}\ar{u} & \cdots
\end{tikzcd}
\]
with the first $p$ columns consisting of only $0$'s. This is clearly the Hochschild bicomplex $EH(A)$ preceded by $p$ columns of $0$'s. Therefore the total complex is a sequence of $p$ $0$'s and then $\mc{T}(EH(A))$. This gives rise to an $\prescript{}{1}{E_1}$-page as follows
\[
\begin{tikzpicture}
\draw[->] (0.75, 1) -- (10.5, 1); 
\draw[->] (1, 0.75) -- (1, 6.5); 

\node (1) at (0.75, 6.4) {$q$};
\node (2) at (10.25, 0.75) {$p$};

\node (11) at (0.9,1) {$\scriptstyle HH^0_{dg}(A)$};
\node (12) at (2.5,1) {$0$};
\node (13) at (4,1) {$0$};
\node (14) at (5.5,1) {$0$};
\node (15) at (7,1) {$0$};
\node (16) at (8.5,1) {$0$};
\node (17) at (10,1) {$0$};
\node (21) at (0.9,2) {$\scriptstyle HH^1_{dg}(A)$};
\node (22) at (2.5,2) {$\scriptstyle HH^0_{dg}(A)$};
\node (23) at (4,2) {$0$};
\node (24) at (5.5,2) {$0$};
\node (25) at (7,2) {$0$};
\node (26) at (8.5,2) {$0$};
\node (27) at (10,2) {$0$};
\node (31) at (0.9,3) {$\scriptstyle HH^2_{dg}(A)$};
\node (32) at (2.5,3) {$\scriptstyle HH^1_{dg}(A)$};
\node (33) at (4,3) {$\scriptstyle HH^0_{dg}(A)$};
\node (34) at (5.5,3) {$0$};
\node (35) at (7,3) {$0$};
\node (36) at (8.5,3) {$0$};
\node (37) at (10,3) {$0$};
\node (41) at (0.9,4) {$\scriptstyle HH^3_{dg}(A)$};
\node (42) at (2.5,4) {$\scriptstyle HH^2_{dg}(A)$};
\node (43) at (4,4) {$\scriptstyle HH^1_{dg}(A)$};
\node (44) at (5.5,4) {$\scriptstyle HH^0_{dg}(A)$};
\node (45) at (7,4) {$0$};
\node (46) at (8.5,4) {$0$};
\node (47) at (10,4) {$0$};
\node (51) at (0.9,5) {$\scriptstyle HH^4_{dg}(A)$};
\node (52) at (2.5,5) {$\scriptstyle HH^3_{dg}(A)$};
\node (53) at (4,5) {$\scriptstyle HH^2_{dg}(A)$};
\node (54) at (5.5,5) {$\scriptstyle HH^1_{dg}(A)$};
\node (55) at (7,5) {$\scriptstyle HH^0_{dg}(A)$};
\node (56) at (8.5,5) {$0$};
\node (57) at (10,5) {$0$};
\node (61) at (0.9,6) {$\scriptstyle HH^5_{dg}(A)$};
\node (62) at (2.5,6) {$\scriptstyle HH^4_{dg}(A)$};
\node (63) at (4,6) {$\scriptstyle HH^3_{dg}(A)$};
\node (64) at (5.5,6) {$\scriptstyle HH^2_{dg}(A)$};
\node (65) at (7,6) {$\scriptstyle HH^1_{dg}(A)$};
\node (66) at (8.5,6) {$\scriptstyle HH^0_{dg}(A)$};
\node (67) at (10,6) {$0$};

\end{tikzpicture}
\]
Note that this immediately implies that $\prescript{}{1}{E_\infty^{p,q}}\simeq \{0\} $ for all $q<p$ and $\prescript{}{1}{E_2^{p,p}}\simeq HH^0_{dg}(A)/\im{HH^1_{dg}(A)\to HH^0_{dg}(A)}$ for all $p>0$. We also see that $HC^0_{dg}(A)\simeq\prescript{}{1}{E_\infty^{0,0}}\simeq HH^0_{dg}(A)$, as is the case when $A$ is a non-graded algebra. 

\begin{defn}
Let $V$ be a graded vector space. The \emph{$k$-th negative truncation} of $V$ is the graded vector space $\Delta_{-}^kV$ given by
\[
(\Delta_{-}^kV)^n =
	\begin{cases}
	V^{k-n}& n\le k \\
	0& n>k
	\end{cases}
\]
\end{defn}

Each row $\prescript{}{1}{E_1^{\ast, q}}\simeq \Delta_{-}^qHH^*_{dg}(A)$ and therefore the second page $\prescript{}{1}{E_2}$ is given by
\[
	\prescript{}{1}{E_2^{p,q}}\simeq H^p(\Delta_{-}^qHH^*_{dg}(A))
\]
And so we have:
\begin{thm}
\label{spec seq for hc with hh as E2}
For each dga $(A,\phi)$ there exists a spectral sequence with $E_2^{p,q}\simeq H^p(\Delta_{-}^qHH^*_{dg}(A))$ that converges to the cyclic cohomology $HC^{p+q}_{dg}(A)$.
\end{thm}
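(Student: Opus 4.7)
The plan is to use the spectral sequence $\prescript{}{1}{E}$ already introduced in the preceding discussion, which arises from the filtration $F_1$ on $\mc{T}(EC(A))$ and converges to $HC^{p+q}_{dg}(A)$ because $F_1$ is canonically bounded. Since convergence is already established, the work reduces entirely to identifying the $E_2$ page.

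First I would describe the $E_1$ page in the way the preceding exposition sets up: each column $\prescript{}{1}{E_0^{p,*}}$ is $\mc{T}(EH(A))$ preceded by $p$ zero columns (with the vertical differential being $b \pm \del$), so taking cohomology of the columns yields
\[
    \prescript{}{1}{E_1^{p,q}} \simeq
    \begin{cases}
        HH^{q-p}_{dg}(A) & p \le q \\
        0 & p > q
    \end{cases}
\]
This matches precisely the definition of $\Delta_{-}^q$ applied to the graded vector space $HH^*_{dg}(A)$, so reading the $E_1$ page row-by-row gives $\prescript{}{1}{E_1^{*,q}} \simeq \Delta_{-}^q HH^*_{dg}(A)$ as already observed in the text.

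Next I would identify the $d_1$ differential. Among the three differentials $b,\,B,\,\pm\del$ of the tricomplex $EC(A)$, the vertical differential $b \pm \del$ was used to pass to $E_1$; the remaining piece $B$ is exactly the horizontal differential with respect to $F_1$, and Lemma~\ref{delB is Bdel} together with the bicomplex identities $Bb = -bB$ and $B^2 = 0$ guarantee that $B$ descends to a well-defined differential on $\prescript{}{1}{E_1^{*,q}}$. This endows each row $\Delta_{-}^q HH^*_{dg}(A)$ with the induced cochain-complex structure appearing in the statement, and taking cohomology gives
\[
    \prescript{}{1}{E_2^{p,q}} \simeq H^p(\Delta_{-}^q HH^*_{dg}(A)).
\]

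The main obstacle, such as it is, is only the bookkeeping needed to confirm that the $d_1$ map really is the map induced by $B$ on Hochschild cohomology; once that is observed, the theorem follows immediately from the calculation of $E_1$ and the convergence result from the earlier application of \cite[Theorem 2.6]{McClearyJohn2001Augt}.
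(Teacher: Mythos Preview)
Your proposal is correct and follows essentially the same approach as the paper: the argument there is precisely the discussion preceding the theorem, which uses the filtration $F_1$ and the identification of each column $\prescript{}{1}{E_0^{p,*}}$ with a shifted $\mc{T}(EH(A))$ to obtain the staircase $E_1$ page and hence the stated $E_2$ page. If anything, your explicit identification of $d_1$ as the map induced by $B$ is slightly more detailed than what the paper records.
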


\begin{prop}
\label{degen at E2 gives surj}
If the spectral sequence in Theorem~\ref{spec seq for hc with hh as E2} degenerates at the $E_2$ page then there is a surjection $$HC^n_{dg}(A)\to\ker{(HH^n_{dg}(A)\to HH^{n-1}_{dg}(A))}$$ for all $n\ge 1$.
\end{prop}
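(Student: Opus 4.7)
The plan is to realise the desired surjection as the edge morphism of the spectral sequence from Theorem~\ref{spec seq for hc with hh as E2}. First I would unpack the complex $\Delta_{-}^n HH^*_{dg}(A)$ explicitly: by definition it is concentrated in degrees $0,1,\ldots,n$ with $(\Delta_{-}^n HH^*_{dg}(A))^p = HH^{n-p}_{dg}(A)$, so it begins
\[
	HH^n_{dg}(A)\to HH^{n-1}_{dg}(A)\to\cdots\to HH^0_{dg}(A)\to 0.
\]
Taking $H^0$ then gives $E_2^{0,n}\simeq \ker(HH^n_{dg}(A)\to HH^{n-1}_{dg}(A))$, where the map is the $d_1$-differential inherited from the spectral sequence, which I take to be the map named in the statement.

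Next, under the degeneration hypothesis at $E_2$ we have $E_\infty^{0,n}=E_2^{0,n}$, and since the spectral sequence converges (the filtration $F_1$ being canonically bounded, as already observed in the paragraph preceding Theorem~\ref{spec seq for hc with hh as E2}), it equips $HC^n_{dg}(A)$ with a decreasing filtration $\{F^s HC^n_{dg}(A)\}$ satisfying $F^0 HC^n_{dg}(A) = HC^n_{dg}(A)$ (because $p\ge 0$ throughout $EC$) and $F^s HC^n_{dg}(A)/F^{s+1}HC^n_{dg}(A)\simeq E_\infty^{s,n-s}$. The required surjection would then be the composition
\[
	HC^n_{dg}(A)\twoheadrightarrow HC^n_{dg}(A)/F^1 HC^n_{dg}(A)\simeq E_\infty^{0,n}\simeq \ker(HH^n_{dg}(A)\to HH^{n-1}_{dg}(A)).
\]

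The main obstacle, such as it is, lies in confirming that the $d_1$-differential on the row $q=n$ of the $E_1$-page — which is what defines the complex $\Delta_{-}^n HH^*_{dg}(A)$ in Theorem~\ref{spec seq for hc with hh as E2} — really is the map $HH^n_{dg}(A)\to HH^{n-1}_{dg}(A)$ referenced in the statement. Since the paper appears to produce such a map only via the spectral sequence construction, I expect this identification to be a tautology once the definitions are unwound, leaving the rest of the argument as formal edge-map bookkeeping for a strongly convergent spectral sequence.
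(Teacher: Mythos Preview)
Your proposal is correct and follows essentially the same route as the paper: both identify $E_2^{0,n}$ with $\ker(HH^n_{dg}(A)\to HH^{n-1}_{dg}(A))$ by unwinding $H^0(\Delta_-^n HH^*_{dg}(A))$, and then use the edge homomorphism $HC^n_{dg}(A)\twoheadrightarrow E_\infty^{0,n}=E_2^{0,n}$ under the degeneration hypothesis. The paper phrases the edge map as a composition of the canonical surjection with the (now trivial) inclusion $E_\infty^{0,n}\hookrightarrow E_2^{0,n}$, while you spell out the filtration quotient explicitly, but this is only a difference in presentation.
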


\begin{proof}
From Theorem~\ref{spec seq for hc with hh as E2} then 
\[
	\prescript{}{1}{E_2^{p,q}}\simeq H^p(\Delta_{-}^qHH^*_{dg}(A))
\]
and therefore 
\[
	\prescript{}{1}{E^{0,n}_2}\simeq H^0(\Delta_{-}^nHH^*_{dg}(A))\simeq\ker{(HH^n_{dg}(A)\to HH^{n-1}_{dg}(A))}.
\]
The edge homomorphism $HC^n_{dg}(A)\to\prescript{}{1}{E^{0,n}_2}$ is the composition of the surjection $HC^n_{dg}(A)\to \prescript{}{1}{E^{0,n}_\infty}$, and the injection $\prescript{}{1}{E^{0,n}_\infty}\to\prescript{}{1}{E^{0,n}_2}$ coming from the tower of inclusions
\[
	\prescript{}{1}{E^{0,n}_\infty}\subseteq\cdots\subseteq\prescript{}{1}{E^{0,n}_2}.
\]
Then when the spectral sequence in Theorem~\ref{spec seq for hc with hh as E2} degenerates at the $E_2$ page then $\prescript{}{1}{E^{0,n}_\infty}=\prescript{}{1}{E^{0,n}_2}$ and therefore the edge homomorphism $HC^n_{dg}(A)\to\prescript{}{1}{E^{0,n}_2}$ is surjective.
\end{proof}
\begin{prop}
\label{hc1 and hc2}
For all dgas $(A, \phi)$ we have
\begin{align*}
	&HC^1_{dg}(A) \simeq H^0(\Delta_{-}^1HH^*_{dg}(A)) \\
	&HC^2_{dg}(A) \simeq H^0(\Delta_{-}^2HH^*_{dg}(A))\oplus H^1(\Delta_{-}^1HH^*_{dg}(A))
\end{align*}
\end{prop}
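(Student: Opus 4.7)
The plan is to read off both isomorphisms from the spectral sequence of Theorem~\ref{spec seq for hc with hh as E2}, whose $E_2$-page is $\prescript{}{1}{E_2^{p,q}}\simeq H^p(\Delta_{-}^qHH^*_{dg}(A))$ and which converges to $HC^{p+q}_{dg}(A)$. Since $\Delta_{-}^qV$ is concentrated in degrees $0\le p\le q$, we have $\prescript{}{1}{E_2^{p,q}}=0$ whenever $p>q$; this vanishing will be the key input.

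First I would compute the terms on the diagonal $p+q=1$. The only candidates are $(0,1)$ and $(1,0)$. Since $\Delta_-^0HH^*_{dg}(A)$ is concentrated in degree $0$, we get $\prescript{}{1}{E_2^{1,0}}=H^1(\Delta_-^0HH^*_{dg}(A))=0$, leaving only $\prescript{}{1}{E_2^{0,1}}=H^0(\Delta_-^1HH^*_{dg}(A))$. For any $r\ge 2$ the differential $d_r$ out of position $(0,1)$ lands in $(r,2-r)$: when $r=2$ the target is $\prescript{}{1}{E_2^{2,0}}=H^2(\Delta_-^0HH^*_{dg}(A))=0$, and for $r\ge 3$ the complementary degree is negative, so the target vanishes. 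Incoming differentials originate in $p<0$, hence are zero. Therefore $\prescript{}{1}{E_\infty^{0,1}}=\prescript{}{1}{E_2^{0,1}}$, and the filtration on $HC^1_{dg}(A)$ has only this nonzero graded piece, yielding $HC^1_{dg}(A)\simeq H^0(\Delta_-^1HH^*_{dg}(A))$.

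Next I would repeat for $p+q=2$, i.e.\ the positions $(0,2),(1,1),(2,0)$. Truncation gives $\prescript{}{1}{E_2^{2,0}}=H^2(\Delta_-^0HH^*_{dg}(A))=0$, while $\prescript{}{1}{E_2^{0,2}}=H^0(\Delta_-^2HH^*_{dg}(A))$ and $\prescript{}{1}{E_2^{1,1}}=H^1(\Delta_-^1HH^*_{dg}(A))$. I then check that all higher differentials touching these two positions vanish: out of $(0,2)$ the targets are $(2,1)$ for $d_2$, which equals $H^2(\Delta_-^1HH^*_{dg}(A))=0$ since $\Delta_-^1$ is concentrated in degrees $\le 1$, and $(3,0)$ for $d_3$, which equals $H^3(\Delta_-^0HH^*_{dg}(A))=0$; for $r\ge 4$ the complementary degree is negative. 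Out of $(1,1)$ only $d_2$ could be nonzero, landing in $\prescript{}{1}{E_2^{3,0}}=0$. Incoming differentials come from $p<0$ and vanish. Hence the spectral sequence degenerates at $E_2$ in total degree $2$, and the associated filtration on $HC^2_{dg}(A)$ gives a short exact sequence
\[
0\to H^1(\Delta_-^1HH^*_{dg}(A))\to HC^2_{dg}(A)\to H^0(\Delta_-^2HH^*_{dg}(A))\to 0.
\]
Since $\Bbbk$ is a field, this sequence of $\Bbbk$-vector spaces splits, producing the desired direct sum decomposition.

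The only subtlety I anticipate is the bookkeeping for the higher differentials: one has to carefully use the truncation definition to notice that $\Delta_-^qHH^*_{dg}(A)$ is concentrated in the range $0\le p\le q$, so many of the $E_2$ positions that could \emph{a priori} carry a nontrivial differential are already zero. Once this is set up, the splitting of the extension is automatic over a field and no additional argument is needed.
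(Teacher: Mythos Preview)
Your argument is correct and follows the same route as the paper: both use the vanishing $\prescript{}{1}{E_2^{p,q}}=0$ for $p>q$ from the spectral sequence of Theorem~\ref{spec seq for hc with hh as E2} to see that the terms $\prescript{}{1}{E_2^{0,1}}$, $\prescript{}{1}{E_2^{0,2}}$, $\prescript{}{1}{E_2^{1,1}}$ stabilise, and then pass to $HC^*_{dg}(A)$ using that we are over a field. The paper is terser (it simply says the relevant terms stabilise ``from inspection'' and writes $HC^n\simeq\bigoplus_{p+q=n}E_\infty^{p,q}$ directly), whereas you spell out the vanishing of each incoming and outgoing differential and make the splitting step explicit; this is a matter of detail, not of strategy.
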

\begin{proof}
From inspection of the spectral sequence in Theorem~\ref{spec seq for hc with hh as E2} we see that the terms $E_2^{0,1}$, $E^{0,2}_2$ and $E_2^{1,1}$ stabilise, and therefore 
\begin{align*}
&E_\infty^{0,1}\simeq E_2^{0,1} \simeq H^0(\Delta_{-}^1HH^*_{dg}(A))\\
&E_\infty^{0,2}\simeq E_2^{0,2} \simeq H^0(\Delta_{-}^2HH^*_{dg}(A))\\
&E_\infty^{1,1}\simeq E_2^{1,1} \simeq H^1(\Delta_{-}^1HH^*_{dg}(A))
\end{align*}
and since 
\[
	HC^n(A) \simeq\bigoplus_{p+q = n}E_\infty^{p,q}
\]
and $E_\infty^{p,q} = \{0\}$ for $q<p$, then we have
\begin{align*}
	&HC^1_{dg}(A) \simeq E_\infty^{0,1}\\
	&HC^2_{dg}(A) \simeq E_\infty^{0,2}\oplus E_\infty^{1,1}
\end{align*}
and so the result follows.
\end{proof}
Now consider the filtration $F_2$ of $\mc{T}(EC)$ given by
\[
	F^s_2\mc{T}(EC)^n = \bigoplus_{\substack{p+q+r=n\\ q\ge s}}EC^{p,q,r}
\]
Then from the proof of~\cite[Lemma 4.4]{specseqpolyphim2025} each column $\prescript{}{2}{E_0^{p,\ast}}$ is the total complex of 
 \[
\begin{tikzcd}
\tvdots                                                                                        & \tvdots                                                                    &                                    & \tvdots                         & \tvdots          \\   
\arrow{r}C^p(A)^2 \arrow{u}                                                 & \arrow{r}C^{p-1}(A)^2\ar{u}                             & \arrow{r} \cdots        & C^0(A)^2 \ar{r}\ar{u} & 0 \ar{u} \\
 \arrow{r}C^p(A)^1 \arrow{u}                                               &\arrow{r}C^{p-1}(A)^1 \ar{u}                              &  \arrow{r} \cdots         &C^0(A)^1 \ar{r}\ar{u} & 0  \ar{u} \\
 \arrow{r}[swap]{B} C^p(A)^0\arrow{u}{(-1)^p\del}         & \arrow{r} C^{p-1}(A)^0 \ar{u}[swap]{(-1)^{p-1}\del} &\arrow{r} \cdots         & C^0(A)^0 \ar{r}\ar{u}{\del} & 0\ar{u}
\end{tikzcd}
\]
And therefore $\prescript{}{2}{E^{p,q}_1}$ is given by
\[
	\prescript{}{2}{E^{p,q}_1}\simeq H^q\left(\bigoplus_{i=0}^p\Sigma^{-i}C^{p-i}(A)^\ast\right)
\]
where $\Sigma^{-i}C^{k}(A)^n = C^{k}(A)^{n-i}$. Unfortunately we cannot commute the direct sum and cohomology since the differential on $\prescript{}{2}{E_0^{p,\ast}}$ doesn't decompose as a direct sum along the same pattern as the direct sum decomposition of $\prescript{}{2}{E^{p,q}_1}$. So each row $\prescript{}{2}{E^{\ast, q}_2}$ is the cochain complex
\[
\begin{tikzcd}[cramped, sep=small]
H^q(C^0(A)^\ast) \ar{r} &\cdots \ar{r} &  H^q\left(\bigoplus_{i=0}^p\Sigma^{-i}C^{p-i}(A)^\ast\right) \ar{r} & \cdots
\end{tikzcd}
\]
Then denoting this cochain complex by $H^q\left(\bigoplus\Sigma C(A)^\ast\right)$ then $\prescript{}{2}{E^{p,q}_2}$ is given by
\[
	\prescript{}{2}{E^{p,q}_2}\simeq H^p\left(H^q\left(\bigoplus\Sigma C(A)^\ast\right)\right)
\]
And so we have the following result.
\begin{thm}
\label{spec seq for hc from f2 filt}
For each dga $(A,\phi)$ there's a spectral sequence with $E_2^{p,q}\simeq H^p\left(H^q\left(\bigoplus\Sigma C(A)^\ast\right)\right)$ that converges to the cyclic cohomology $HC^{p+q}_{dg}(A)$.
\end{thm}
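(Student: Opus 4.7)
The plan is to package the calculations performed in the paragraphs immediately preceding the theorem into a single statement. First I would invoke the filtration
\[
	F^s_2\mc{T}(EC(A))^n = \bigoplus_{\substack{p+q+r=n\\ q\ge s}}EC(A)^{p,q,r}
\]
on the total complex $\mc{T}(EC(A))$ whose cohomology computes $HC^*_{dg}(A)$ by Definition~\ref{tri cyc cohom of dga}. To obtain a convergent spectral sequence, I would appeal to~\cite[Lemma 4.4]{specseqpolyphim2025} (the same lemma that was already used to identify the columns of the $E_0$ page), which guarantees that $F_2$ is canonically bounded; convergence then follows by~\cite[Theorem 2.6]{McClearyJohn2001Augt}, exactly as in the proof of the earlier spectral sequence theorem obtained from the filtration $F_1$.

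Next I would identify the $E_2$ page. This is where the calculations already carried out immediately before the theorem do all of the work: the column $\prescript{}{2}{E_0^{p,*}}$ was shown to be the total complex of a diagram whose only surviving rows are of the form $C^{p-i}(A)^\bullet$ shifted down by $i$, so that
\[
	\prescript{}{2}{E^{p,q}_1}\simeq H^q\left(\bigoplus_{i=0}^p\Sigma^{-i}C^{p-i}(A)^\ast\right),
\]
and the horizontal differential on this $E_1$-page assembles these terms into the cochain complex denoted $H^q\left(\bigoplus\Sigma C(A)^\ast\right)$. Taking cohomology in the $p$-direction then gives precisely
\[
	\prescript{}{2}{E^{p,q}_2}\simeq H^p\left(H^q\left(\bigoplus\Sigma C(A)^\ast\right)\right),
\]
as required.

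There is no serious obstacle here beyond the bookkeeping that has already been done in the preceding discussion. The only mildly delicate point is the observation, already flagged in the paragraph above the theorem, that one cannot commute the direct sum past the cohomology at $E_1$ because the $E_0$ differential mixes summands in a non-diagonal way; I would simply remark that this is the reason the $E_2$-description is phrased as the cohomology of the assembled complex $H^q\left(\bigoplus\Sigma C(A)^\ast\right)$ rather than as a direct sum of cohomology groups. Convergence to $HC^{p+q}_{dg}(A)$ is then immediate from canonical boundedness of $F_2$ and Definition~\ref{tri cyc cohom of dga}, completing the proof.
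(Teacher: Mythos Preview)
Your proposal is correct and matches the paper's approach exactly: the theorem has no separate proof environment in the paper, and the argument is precisely the calculation in the preceding paragraphs (filtration $F_2$, identification of the $E_0$ columns via \cite[Lemma 4.4]{specseqpolyphim2025}, and the resulting description of $E_1$ and $E_2$), with convergence coming from canonical boundedness. The only minor quibble is that in the paper convergence is attributed to \cite[Theorem 4.3]{specseqpolyphim2025} rather than directly to Lemma 4.4, but this is a citation nuance and not a mathematical gap.
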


For the next spectral sequence that we will discuss, we introduce the following variation on cyclic cohomology for dgas.
\begin{defn}
\label{partial cyclic cohom}
Let $(A,\phi)$ be a dga, then the \emph{$n$-th partial cyclic cohomology $HCP^\ast_{n}(A)$} of $A$ is the cohomology of the total complex of 
\[
\begin{tikzcd}
\tvdots &\tvdots &\tvdots   \\   
\arrow{u} C^2(A)^n \arrow{r}{B}&\arrow{u} C^1(A)^n \arrow{r}{B}&\arrow{u} C^0(A)^n \\
\arrow{u}{b} C^1(A)^n  \arrow{r}{B}& \arrow{u}{b} C^0(A)^n    \\
\arrow{u}{b} C^0(A)^n 
\end{tikzcd}
\]
where we denote the bicomplex above by $\mathcal{BP}^n(A)$. Therefore 
\[
	HCP^m_n(A) = H^m(\mc{T}(\mathcal{BP}^n(A)))
\]
for each $m\ge 0$. 
\end{defn}

\begin{prop}
\label{0th hcp}
$HCP^\ast_0(A)\simeq HC^\ast(A^0)$
\end{prop}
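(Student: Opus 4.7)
The plan is to show that the bicomplex $\mathcal{BP}^0(A)$ coincides, on the nose, with the classical cyclic bicomplex $\mathcal{B}(A^0)$ of the ordinary (ungraded) $\Bbbk$-algebra $A^0$. Once this identification is established, taking total complexes and cohomology immediately gives the result.

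First I would identify the underlying bigraded modules. Assuming the standard convention that $A$ is non-negatively graded (as suggested by the layout of the Hochschild bicomplex earlier in the paper), the degree-$0$ part of $A^{\otimes p+1}$ is exactly $(A^0)^{\otimes p+1}$, since any tensor of non-negatively graded elements summing to $0$ must have every factor in degree $0$. Therefore
\[
C^p(A)^0 = \Hom{\Bbbk}{(A^{\otimes p+1})^0}{\Bbbk} = \Hom{\Bbbk}{(A^0)^{\otimes p+1}}{\Bbbk} = C^p(A^0),
\]
which identifies the columns of $\mathcal{BP}^0(A)$ with the columns of $\mathcal{B}(A^0)$.

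Next I would verify that the two horizontal and two vertical differentials agree under this identification. The Hochschild coboundary $b$ is built from the coface maps $\partial^i$ by multiplying adjacent entries and from $\partial^n$ by cyclically reinserting $a_n$; since $A^0$ is a subalgebra of $A$, these operations restrict to the Hochschild coboundary of $A^0$. For the horizontal differential $B = Ns(\bsym{1}-\tau)$ the only subtlety is the modified cyclic action: recall that
\[
\tau(f)(a_0\otimes\dots\otimes a_n) = (-1)^{\gamma_n + n}f(a_n\otimes a_0\otimes\dots\otimes a_{n-1}),
\]
with $\gamma_n = |a_n|\sum_{i\ne n}|a_i|$. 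When every $a_i$ lies in $A^0$ we have $|a_i| = 0$, so $\gamma_n = 0$ and $\tau$ reduces to the unmodified cyclic action $t$ of the cocyclic module associated with the ordinary algebra $A^0$. Consequently $N = \sum_i \tau^i$, the extra degeneracy $s=(-1)^n\sigma^n$, and $B = Ns(\bsym{1}-\tau)$ all coincide with their unsigned counterparts for $A^0$.

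Having matched both the modules and all differentials, I conclude that $\mathcal{BP}^0(A) = \mathcal{B}(A^0)$ as bicomplexes, so
\[
HCP^m_0(A) = H^m(\mc{T}(\mathcal{BP}^0(A))) = H^m(\mc{T}(\mathcal{B}(A^0))) = HC^m(A^0)
\]
for all $m\ge 0$. The only real delicacy in the argument is the Koszul sign bookkeeping: one must check that in degree zero all Koszul signs collapse to $+1$ so that $\tau$ literally equals $t$; beyond this the proof is a direct unwinding of definitions.
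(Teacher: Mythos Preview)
Your proof is correct and follows exactly the same approach as the paper: identify $(A^{\otimes p+1})^0$ with $(A^0)^{\otimes p+1}$ so that $\mathcal{BP}^0(A)=\mathcal{B}(A^0)$ as bicomplexes, then conclude. Your version is in fact more careful than the paper's, since you explicitly note the non-negative grading assumption needed for that identification and verify that the Koszul signs in $\tau$ collapse in degree~$0$, points the paper leaves implicit.
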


\begin{proof}
$\mathcal{BP}^0(A)$ is the bicomplex
\[
\begin{tikzcd}
\tvdots &\tvdots &\tvdots   \\   
\arrow{u} C^2(A)^0 \arrow{r}{B}&\arrow{u} C^1(A)^0 \arrow{r}{B}&\arrow{u} C^0(A)^0 \\
\arrow{u}{b} C^1(A)^0  \arrow{r}{B}& \arrow{u}{b} C^0(A)^0    \\
\arrow{u}{b} C^0(A)^0 
\end{tikzcd}
\]
and $C^m(A)^0 = \Hom{\Bbbk}{(A^{\otimes m+1})^0}{\Bbbk}$. However $(A^{\otimes m+1})^0 = (A^0)^{\otimes m+1}$, and therefore $\mathcal{BP}^0(A) = \mathcal{B}(A^0)$. Since $A^0$ inherits an algebra structure from the algebra structure of $A$, then taking the cohomology of the total complex of $\mathcal{B}(A^0)$ gives us the cyclic cohomology of $A^0$, and so $HCP^\ast_0(A)\simeq HC^\ast(A^0)$.
\end{proof}

\begin{prop}
\label{partial cyc cohom is subset}
$HCP^0_n(A)\subset HC^n_{dg}(A)$ for all $n$.
\end{prop}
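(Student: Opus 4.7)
My plan is to exhibit $HCP^0_n(A)$ as a subgroup of $HC^n_{dg}(A)$ via the natural identification of $\mathcal{BP}^n(A)$ with the slice of the tricomplex $EC(A)$ at internal degree $r=n$. Since $\mathcal{BP}^n(A)^{p,q} = C^{q-p}(A)^n = EC(A)^{p,q,n}$, there is a $\Bbbk$-linear inclusion $\iota\colon \mc{T}(\mathcal{BP}^n(A))^m \hookrightarrow \mc{T}(EC(A))^{m+n}$ shifting total degree by $n$; in particular it embeds $\mc{T}(\mathcal{BP}^n(A))^0 = C^0(A)^n$ as the summand at position $(0,0,n)$ of $\mc{T}(EC(A))^n$.

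Next, for $f \in HCP^0_n(A) = \ker(b|_{C^0(A)^n})$, I would check that $\iota(f)$ defines a cocycle in $\mc{T}(EC(A))^n$. Decomposing the total tricomplex differential at $(0,0,n)$: the $b$-component vanishes by hypothesis; the $B$-component lies in $C^{-1}(A)^n = 0$; the remaining $\pm\del$-component is handled via the bicomplex commutation $b\del = \pm\del b$, which places $\del(f)$ back into $\ker b$ and, together with the extremal position of the corner of the tricomplex, shows this term does not obstruct the cocycle property.

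For injectivity, suppose $\iota(f) = d_{EC}(g)$ for some $g \in \mc{T}(EC(A))^{n-1}$. I would analyze which components of $g$ can project onto the $(0,0,n)$-summand of $d_{EC}(g)$: only $\pm\del$ applied to the $(0,0,n-1)$-component contributes, since $b$- and $B$-contributions would require non-existent negative indices. Thus $f$ is forced to lie in the image of $\del\colon C^0(A)^{n-1} \to C^0(A)^n$, but since $HCP^0_n(A)$ sits in total degree zero of $\mc{T}(\mathcal{BP}^n(A))$ with no possible coboundaries, this forces $f = 0$ in $HCP^0_n(A)$.

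The main obstacle is reconciling the internal dg differential $\del$, which is present in $EC(A)$ but absent from the bicomplex $\mathcal{BP}^n(A)$: the inclusion $\iota$ is not on the nose a chain map, and verifying the cocycle property of $\iota(f)$ requires careful use of the bicomplex commutation relations among $b$, $B$, and $\del$ at the extremal corner position $(0,0,n)$ of the tricomplex. I expect this bookkeeping, rather than any deep structural result, to be where the proof's real work lies.
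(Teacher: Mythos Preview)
Your proposal correctly identifies the central obstacle---the internal differential $\del$ present in $EC(A)$ but absent from $\mathcal{BP}^n(A)$---but the proposed resolution does not go through. Placing $f\in\ker(b|_{C^0(A)^n})$ at position $(0,0,n)$, the total tricomplex differential gives $b(f)+B(f)\pm\del(f)$; the first two vanish, but $\pm\del(f)\in C^0(A)^{n+1}$ lands at $(0,0,n+1)$ and is generically nonzero. That $\del(f)\in\ker b$ is true but irrelevant: it does not make $\del(f)$ vanish as an element of $\mc{T}(EC(A))^{n+1}$. Moreover, no other summand of $\mc{T}(EC(A))^n$ can reach position $(0,0,n+1)$ under $d_{EC}$ (the only candidate sources would be $(0,-1,n+1)$ via $b$ or $(-1,0,n+1)$ via $B$, neither of which exists), so there is no way to correct $\iota(f)$ by adding higher terms. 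Hence $\iota(f)$ is a cocycle only when $\del(f)=0$, and the map you describe is not defined on all of $HCP^0_n(A)$. Your injectivity step has the mirror-image flaw: you correctly deduce that if $\iota(f)=d_{EC}(g)$ then $f\in\im\big(\del\colon C^0(A)^{n-1}\to C^0(A)^n\big)$, but $\del$ is not a differential of $\mathcal{BP}^n(A)$, so $f\in\im\del$ in no way forces $f=0$ in $HCP^0_n(A)=\ker b$.

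For comparison, the paper's own argument takes the same inclusion, simply asserts that it is a cochain map, and then deduces injectivity on cohomology from the statement that cochain maps send coboundaries to coboundaries (which is the wrong direction for the inference being drawn). It does not confront the $\del$-component you singled out, so you will not find there a mechanism that makes the bookkeeping close. The obstacle you flagged is real, not mere bookkeeping, and neither argument resolves it.
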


\begin{proof}
Consider
\[
\begin{tikzcd}[cramped, sep=small]
  C^0(A)^0\arrow{r}& \cdots \ar{r} &\mc{T}(EC)^{n-1}\ar{r}& \mc{T}(EC)^n\ar{r} & \mc{T}(EC)^{n+1}\ar{r} & \cdots \\
   0\ar{r}& \cdots \ar{r}& 0\ar{r}& C^0(A)^n \ar{r}[swap]{b} \ar{u}{i}& \ar{r} C^1(A)^n\ar{r}\ar{u}{i} & \cdots
\end{tikzcd}
\]
Then $\map{i}{\Sigma^{-n}\mc{T}(\mathcal{BP}^n(A))}{\mc{T}(EC)}$ is an injective cochain map. Clearly, $HCP^0_n(A) = \ker{(b)}$ and so each cohomology class has only one representative, that is, the coboundaries are $\{0\}$. Now take $f\ne g\in\ker{(b)}$ and suppose that
\[
	i^*[f] = [i(f)] = [i(g)]  =i^*[g]
\]
This is true if and only if $i(f)-i(g)\in B^n(\mc{T}(EC))$ and therefore $i(f-g)\in B^n(\mc{T}(EC))$. But since $i$ is a cochain map then it must send coboundaries to coboundaries, and so this implies that $f-g$ is a coboundary in $C^0(A)^n$. But this implies $f = g$ which is a contradiction. Therefore $i^*[f]\ne i^*[g]$ and thus $\map{i^*}{HCP^0_n(A)}{HC^n_{dg}(A)}$ is injective.
\end{proof}

Now, consider the filtration $F_3$ given by
\[
	F_3^s\mc{T}(EC)^n = \bigoplus_{\substack{p+q+r=n\\ r\ge s}}EC^{p,q,r}
\]
Then by~\cite[Lemma 4.4]{specseqpolyphim2025} each column $\prescript{}{3}{E_0^{p,\ast}}$ is the total complex of 
\[
\begin{tikzcd}
\tvdots &\tvdots &\tvdots   \\   
\arrow{u} C^2(A)^p \arrow{r}{B}&\arrow{u} C^1(A)^p \arrow{r}{B}&\arrow{u} C^0(A)^p \\
\arrow{u}{b} C^1(A)^p  \arrow{r}{B}& \arrow{u}{b} C^0(A)^p    \\
\arrow{u}{b} C^0(A)^p 
\end{tikzcd}
\]
and therefore $\prescript{}{3}{E_1^{p,\ast}} \simeq HCP^\ast_p(A)$. In particular $\prescript{}{3}{E_1^{0,\ast}} \simeq HC^*(A^0)$. And so the rows $\prescript{}{3}{E_1^{\ast, q}}$ are given by
\[
	\prescript{}{3}{E_1^{\ast, q}} \simeq HCP^q_\ast(A)
\]
and therefore the second page $\prescript{}{3}{E_2}$ is given by
\[
	\prescript{}{3}{E_2^{p,q}} \simeq H^p(HCP^q_\ast(A))
\]
Thus we have:
\begin{thm}
\label{spec seq hcp}
For every dga $(A,\phi)$ there exists a spectral sequence with $E_2^{p,q}\simeq H^p(HCP^q_\ast(A))$ that converges to the cyclic cohomology $HC^{p+q}_{dg}(A)$.
\end{thm}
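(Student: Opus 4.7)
The plan is to formalize the derivation already laid out in the paragraphs immediately preceding the statement. The ingredients to assemble are: convergence of the spectral sequence associated with the filtration $F_3$, identification of the $E_0$ page with the partial cyclic bicomplexes, and computation of $E_2$ by taking the horizontal cohomology induced by $\del$.

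For convergence, since $EC(A)^{p,q,r}$ vanishes outside $p,q,r\ge 0$, the filtration $F_3^s\mc{T}(EC(A))^n = \bigoplus_{p+q+r=n,\, r\ge s} EC(A)^{p,q,r}$ is canonically bounded in each total degree: it vanishes for $s>n$ and stabilises at $\mc{T}(EC(A))^n$ for $s\le 0$. Hence \cite[Theorem 2.6]{McClearyJohn2001Augt} will give a strongly convergent spectral sequence abutting to $HC^*_{dg}(A) = H^*(\mc{T}(EC(A)))$. For the $E_0$ identification, I would invoke \cite[Lemma 4.4]{specseqpolyphim2025} to recognise that $\prescript{}{3}{E_0^{p,*}}$ is the total complex of the bicomplex $\mathcal{BP}^p(A)$ of Definition~\ref{partial cyclic cohom}, so vertical cohomology yields $\prescript{}{3}{E_1^{p,q}}\simeq HCP^q_p(A)$.

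For the $d_1$ differential and hence $E_2$, the key observation is that $F_3$ filters by the dg-degree $r$, so the differentials surviving at $E_0$ are the $b$ and $B$ maps (which preserve $r$), while the $\pm\del$ component of the total differential raises $r$ by one and therefore induces $d_1$ on $E_1$. Lemma~\ref{delB is Bdel} together with the straightforward commutation of $\del$ with $b$ shows that $\del$ descends to a well-defined map $HCP^q_p(A)\to HCP^q_{p+1}(A)$; assembling these as $p$ varies yields the cochain complex $HCP^q_*(A)$ whose $p$-th cohomology is by construction $\prescript{}{3}{E_2^{p,q}}\simeq H^p(HCP^q_*(A))$.

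The only point I expect to require real care is verifying that the alternating sign of $\pm\del$ in the tricomplex $EC(A)$ does not interfere with identifying the induced $d_1$, that is, confirming that the induced map on cohomology agrees with what we naturally call the dg-differential on the complex $HCP^q_*(A)$. Beyond this check, the argument is a direct packaging of the preceding discussion.
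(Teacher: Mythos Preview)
Your proposal is correct and follows exactly the approach the paper takes: the paper's ``proof'' is precisely the discussion preceding the theorem, invoking \cite[Lemma 4.4]{specseqpolyphim2025} for the $E_0$ identification with $\mathcal{BP}^p(A)$, reading off $\prescript{}{3}{E_1^{p,q}}\simeq HCP^q_p(A)$, and then passing to $E_2$. Your added remarks on convergence via bounded filtration and on why $\pm\del$ is the $d_1$ differential are details the paper leaves implicit (convergence being subsumed in the earlier reference to \cite[Theorem 4.3]{specseqpolyphim2025}), so your write-up is if anything slightly more explicit than the paper's.
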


The final spectral sequence that we will construct comes from the filtration $F_{1,3}$ given by
\[
	F^s_{1,3}\mc{T}(EC)^n = \bigoplus_{\substack{p+q+r=n\\ p+r\ge s}}EC^{p,q,r}
\]
and for this spectral sequence we introduce a variation of Hochschild cohomology of a dga, analogous to Definition~\ref{partial cyclic cohom}.
\begin{defn}
Let $(A,\phi)$ be a dga, then the \emph{$n$-th partial Hochschild cohomology} $HHP^\ast_n(A, M)$ \emph{with coefficients in the dg A-bimodule $M$} is the cohomology of
\[
\begin{tikzcd}
M^n \arrow{r}{b} & \Hom{\Bbbk}{A}{M}^n \arrow{r}{b} &  \Hom{\Bbbk}{A^{\otimes 2}}{M}^n \arrow{r} & \cdots
\end{tikzcd}
\]
This is the \emph{$n$-th partial Hochschild complex} $CP^n(A, M)$ and so 
\[
	HHP^m_n(A, M) = H^m(CP^n(A, M))
\]
for each $m\ge 0$. When $M = A^\ast$ then we write $HHP^\ast_n(A) := HHP^\ast_n(A, A^\ast)$.
\end{defn}

\begin{prop}
$HHP^\ast_0(A, M)\simeq HH^*(A^0, M)$
\end{prop}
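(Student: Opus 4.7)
The plan is to directly verify that the $0$-th partial Hochschild complex $CP^0(A, M)$ coincides, as a cochain complex, with the (non-graded) Hochschild complex of the $\Bbbk$-algebra $A^0$ with coefficients in $M^0$ (viewed as an $A^0$-bimodule by restriction of scalars). Once this identification is set up, the isomorphism on cohomology is immediate, and the argument is entirely parallel to the proof of Proposition~\ref{0th hcp}, where the cyclic version $HCP^*_0(A) \simeq HC^*(A^0)$ is established.

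First I would identify the cochain groups. At level $m$ we have $CP^0(A, M)^m = \Hom{\Bbbk}{A^{\otimes m}}{M}^0$. Following the convention used in the proof of Proposition~\ref{0th hcp} (where $C^m(A)^0 = \Hom{\Bbbk}{(A^{\otimes m+1})^0}{\Bbbk}$), the degree-$0$ part of the graded Hom picks out maps from $(A^{\otimes m})^0$ to $M^0$. Since $A$ is non-negatively graded, the natural identification $(A^{\otimes m})^0 = (A^0)^{\otimes m}$ holds, so that
\[
	CP^0(A, M)^m \cong \Hom{\Bbbk}{(A^0)^{\otimes m}}{M^0},
\]
which is precisely the classical Hochschild cochain group $C^m(A^0, M^0)$.

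Second I would check that the horizontal coboundary matches. The coface maps $\partial^i$ are constructed from the multiplication on $A$ and the bimodule action on $M$; when restricted to inputs in $A^0$ and outputs in $M^0$, they descend to the ordinary multiplication on $A^0$ and the induced $A^0$-bimodule action on $M^0$. Moreover, every element involved sits in degree $0$, so any Koszul sign of the form $(-1)^{|a_k|(|a_0| + \cdots)}$ that would appear in the graded setting evaluates to $+1$. Thus $b = \sum_{i=0}^m (-1)^i \partial^i$ on $CP^0(A, M)$ reduces exactly to the classical Hochschild coboundary on $C^*(A^0, M^0)$.

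Consequently $CP^0(A, M) = C^*(A^0, M^0)$ as cochain complexes, and taking cohomology yields $HHP^*_0(A, M) \cong HH^*(A^0, M)$ as claimed. No real obstacle arises: the proof is a routine unwinding of definitions, with the only bookkeeping being the identification of the degree-$0$ piece of the graded $\Hom{}{}{}$ and the observation that Koszul signs trivialize on degree-$0$ inputs, both of which are already present in the cyclic case of Proposition~\ref{0th hcp}.
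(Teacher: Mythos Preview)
Your proposal is correct and follows exactly the approach the paper indicates: the paper's own proof consists of the single sentence ``The proof is exactly analogous to the proof for Proposition~\ref{0th hcp},'' and you have carried out precisely that analogy, identifying $CP^0(A,M)$ with the classical Hochschild complex of $A^0$ via $(A^{\otimes m})^0=(A^0)^{\otimes m}$ and checking that the coface maps and Koszul signs specialise correctly in degree~$0$. The only cosmetic discrepancy is that you write the coefficient module as $M^0$ whereas the paper writes $M$; this reflects a harmless ambiguity in the paper's notation rather than a divergence in the argument.
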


\begin{proof}
The proof is exactly analogous to the proof for Proposition~\ref{0th hcp}. 
\end{proof}

\begin{prop}
$HHP^0_n(A)\subset HH^n_{dg}(A)$ for all $n$.
\end{prop}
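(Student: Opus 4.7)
The plan is to mirror the proof of Proposition~\ref{partial cyc cohom is subset}, transposing from the cyclic tricomplex $EC(A)$ to the Hochschild bicomplex $EH(A)$. By construction, $CP^n(A)$ is the row at dg-grade $n$ of $EH(A)$, with $CP^n(A)^p = \Hom{\Bbbk}{A^{\otimes p}}{A^*}^n = EH(A)^{p,n}$ and horizontal differential $b$, so a shifted copy of $CP^n(A)$ sits naturally inside $\mc{T}(EH(A))$.

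First I will construct an injective cochain map $\map{i}{\Sigma^{-n} CP^n(A)}{\mc{T}(EH(A))}$ that sends $CP^n(A)^p = EH(A)^{p,n}$ into the corresponding summand of $\mc{T}(EH(A))^{p+n}$. The shift $\Sigma^{-n}$ is chosen so that the position-$0$ term $(A^*)^n$ of $CP^n(A)$ lands in total degree $n$ of $\mc{T}(EH(A))$, which is where $HH^n_{dg}(A)$ is computed. This mimics the diagram in Proposition~\ref{partial cyc cohom is subset}, with rows starting at total degree $n$ rather than at position $0$.

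Next I will note that $HHP^0_n(A) = \ker(b : (A^*)^n \to \Hom{\Bbbk}{A}{A^*}^n)$ and that $\Sigma^{-n} CP^n(A)$ vanishes in degrees below $n$, so every class in $HHP^0_n(A)$ has a unique representative and the only coboundary at position $n$ is zero. To prove that $i^* : HHP^0_n(A) \to HH^n_{dg}(A)$ is injective I will argue as in Proposition~\ref{partial cyc cohom is subset}: if $f \neq g$ in $HHP^0_n(A)$ but $i^*[f] = i^*[g]$, then $i(f - g)$ is a coboundary in $\mc{T}(EH(A))$; invoking injectivity plus the cochain-map property of $i$ forces $f - g$ to be a coboundary in $\Sigma^{-n} CP^n(A)$, which must then equal zero, contradicting $f \neq g$.

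The main technical obstacle is establishing that $i$ is genuinely a cochain map. The total differential of $\mc{T}(EH(A))$ carries a vertical $\pm\del$ component which moves elements out of the row at grade $n$, whereas the source $\Sigma^{-n} CP^n(A)$ only carries the horizontal $b$ differential; the analogous subtlety arises in Proposition~\ref{partial cyc cohom is subset} when embedding $\mathcal{BP}^n(A)$ into $EC(A)$, and the same resolution must be adapted here. Once this compatibility is pinned down, the rest of the argument is essentially a verbatim transcription of the cyclic case.
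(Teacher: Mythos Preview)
Your proposal is exactly the paper's approach: the paper's proof consists of the single sentence ``Again the proof is analogous to the proof of Proposition~\ref{partial cyc cohom is subset},'' and you have spelled out precisely that analogy, replacing $EC(A)$ by $EH(A)$ and $\mathcal{BP}^n(A)$ by $CP^n(A)$.

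One remark on the ``technical obstacle'' you flag: the paper's proof of Proposition~\ref{partial cyc cohom is subset} does not actually contain an explicit resolution of the $\del$-compatibility issue --- it simply asserts that $i$ is a cochain map --- so there is no additional mechanism to adapt; whatever licence the paper grants itself there carries over verbatim to the Hochschild case.
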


\begin{proof}
Again the proof is analogous to the proof of Proposition~\ref{partial cyc cohom is subset}.
\end{proof}
In particular $HHP^\ast_0(A) \simeq HH^\ast(A^0)$. By~\cite[Lemma 4.4]{specseqpolyphim2025} the column $\prescript{}{1,3}{E_0^{0,\ast}}$ is the cochain complex 
\[
\begin{tikzcd}
\tvdots                                            \\              
 C^2(A)^0\arrow{u}{b}             \\                   
 C^1(A)^0 \arrow{u}{b}                          \\     
 C^0(A)^0\arrow{u}{b}                  
\end{tikzcd}
\]
which is the Hochschild complex for $A^0$, and therefore $\prescript{}{1,3}{E_1^{0,\ast}}\simeq HH^\ast(A^0)$. The column $\prescript{}{1,3}{E_0^{1,\ast}}$ is the total complex of
\[
\begin{tikzcd}
\tvdots                                        & \tvdots                          \\   
 C^2(A)^1 \arrow{u}{b}                 & C^1(A)^0 \arrow{u}{b}        \\
 C^1(A)^1 \arrow{u}{b}                 & C^0(A)^0 \arrow{u}{b}    \\
 C^0(A)^1\arrow{u}{b}             & 
\end{tikzcd}
\]
Which is the cochain complex $C^*(A)^1\oplus \Sigma^{-1}C^*(A)^0$. The differential also decomposes as a sum $b + b$ along the same pattern, so the cohomology splits as a direct sum too, and therefore 
\[
	\prescript{}{1,3}{E_1^{1,\ast}}\simeq HHP^\ast_1(A)\oplus \Sigma^{-1}HH^*(A^0).
\]
Continuing in this way we see that in general $\prescript{}{1,3}{E_1^{p,\ast}}$ is given by
\[
	\prescript{}{1,3}{E_1^{p,\ast}}\simeq \bigoplus_{i=0}^p\Sigma^{-i}HHP^*_{p-i}(A).
\]
Now, each row $\prescript{}{1,3}{E_1^{\ast, q}}$ is the cochain complex
\[ 
\begin{tikzcd}[cramped, sep=small]
HH^q \ar{r} & HHP^q_1\oplus HH^{q-1} \ar{r}&HHP^q_2\oplus HHP^{q-1}_1\oplus HH^{q-2}\ar{r} & \cdots&.
\end{tikzcd}
\]
Where we've abbreviated symbols for the purposes of readability. We will denote this cochain complex by $\bigoplus HHP^q_\ast(A)$ and therefore 
\[
	\prescript{}{1,3}{E_2^{p,q}}\simeq H^p(\bigoplus HHP^q_\ast(A)).
\]
And thus we have:

\begin{thm}
\label{spec seq hhp}
For every dga $(A,\phi)$ there exists a spectral sequence with $E_2^{p,q}\simeq H^p(\bigoplus HHP^q_\ast(A))$ that converges to the cyclic cohomology $HC^{p+q}_{dg}(A)$.
\end{thm}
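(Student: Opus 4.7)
The plan is to obtain the spectral sequence as the one associated to the filtration $F_{1,3}$ of $\mc{T}(EC(A))$ already singled out in the excerpt, so that convergence and the $E_2$-identification are essentially bookkeeping built on top of the earlier lemmas. First I would invoke \cite[Lemma 4.4]{specseqpolyphim2025} to confirm that $F_{1,3}$ is canonically bounded on each $\mc{T}(EC(A))^n$ and hence that the associated spectral sequence is convergent with abutment $HC^{p+q}_{dg}(A)$ via Definition~\ref{hc defn with cyc tricomp}; this takes care of the convergence statement without further work.

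The substantive part is the identification of the $E_2$-page. I would carry out the explicit description of the $E_0$-columns exactly as in the discussion preceding the theorem: the column $\prescript{}{1,3}{E_0^{p,\ast}}$ is the total complex of a bicomplex whose $i$-th column (for $0\le i\le p$) is the shifted partial Hochschild complex $\Sigma^{-i}CP^\ast(A)^{p-i}$, and the horizontal differential is simply $b$ on each piece since all cross-terms introduced by $B$ and $\del$ are filtered away (they either preserve $p+r$ or raise it, contributing only to higher filtration degree). The key observation is that this bicomplex splits as a direct sum of shifted partial Hochschild complexes, because the only surviving differential is $b$ acting componentwise. Taking vertical cohomology therefore gives
\[
    \prescript{}{1,3}{E_1^{p,\ast}}\simeq \bigoplus_{i=0}^p\Sigma^{-i}HHP^*_{p-i}(A),
\]
as claimed.

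Next I would compute the $d_1$ differential on the $E_1$-page. It is induced by the remaining pieces of the total differential $b + B \pm \del$ not used at the $E_0$ level, namely the combination that moves one unit in the $p+r$ direction; on the chosen representatives this is the map described in the excerpt, giving each row $\prescript{}{1,3}{E_1^{\ast, q}}$ the structure of the cochain complex abbreviated as $\bigoplus HHP^q_\ast(A)$. Taking cohomology of this row yields $\prescript{}{1,3}{E_2^{p,q}}\simeq H^p(\bigoplus HHP^q_\ast(A))$, completing the proof.

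The main technical obstacle is verifying that the $d_1$ differential really is induced correctly on the direct-sum decomposition, i.e.\ that the mixed terms coming from $B$ and from $\del$ assemble into the maps between the partial Hochschild cohomology groups as written. This requires a careful sign and index chase through the tricomplex $EC(A)$, analogous to the argument in Lemma~\ref{delB is Bdel}, but once it is established the rest of the argument follows formally from the spectral sequence of a filtered complex.
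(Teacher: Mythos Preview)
Your proposal is correct and follows essentially the same route as the paper: the argument there is precisely the discussion immediately preceding the theorem, which uses the filtration $F_{1,3}$, invokes \cite[Lemma 4.4]{specseqpolyphim2025} to identify each $\prescript{}{1,3}{E_0^{p,\ast}}$ as a direct sum of shifted partial Hochschild complexes with differential $b$, reads off $\prescript{}{1,3}{E_1^{p,\ast}}\simeq \bigoplus_{i=0}^p\Sigma^{-i}HHP^*_{p-i}(A)$, and then defines the row complex $\bigoplus HHP^q_\ast(A)$ whose cohomology is $E_2$. If anything you are slightly more scrupulous than the paper in flagging the verification of the $d_1$-differential, which the paper simply declares by writing down the row complex.
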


We don't explore the spectral sequences coming from the filtrations $F_{1,2}$ and $F_{2,3}$ because it involves finding the cohomology of the cochain complexes 
\[
\begin{tikzcd}
C^n(A)^0\ar{r}{\del} & C^n(A)^1\ar{r}{\del}& C^n(A)^2 \ar{r}{\del} & \dots
\end{tikzcd}
\]
and
\[
\begin{tikzcd}
C^n(A)^m \ar{r}{B} & C^{n-1}(A)^m \ar{r}{B} & \dots \ar{r}{B}& C^0(A)^m
\end{tikzcd}
\]
respectively, which are less well understood. However, it may be interesting to explore this further. 

\begin{prop}
\label{spec seq morph 1 to 13 and 3 to 13}
Let $\prescript{}{1}{E}$, $\prescript{}{3}{E}$ and $\prescript{}{1,3}{E}$ be the spectral sequences from Theorem~\ref{spec seq for hc with hh as E2}, Theorem~\ref{spec seq hcp} and Theorem~\ref{spec seq hhp} respectively. Then there are morphisms of spectral sequences $\map{f}{\prescript{}{1}{E}}{\prescript{}{1,3}{E}}$ and $\map{g}{\prescript{}{3}{E}}{\prescript{}{1,3}{E}}$.
\end{prop}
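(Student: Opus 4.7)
The plan is to exhibit the two morphisms by showing that the identity map on $\mc{T}(EC(A))$ is a filtered cochain map with respect to each pair of filtrations, and then apply the standard fact that a map of filtered cochain complexes induces a morphism of the associated spectral sequences. All three spectral sequences arise from filtrations of the same total complex $\mc{T}(EC(A))$, so the underlying cochain map is simply $\id{\mc{T}(EC(A))}$; the content lies purely in checking the compatibility of the filtrations.

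First I would observe the two set-theoretic inclusions
\[
F^s_1 \mc{T}(EC)^n \;\subseteq\; F^s_{1,3}\mc{T}(EC)^n, \qquad F^s_3 \mc{T}(EC)^n \;\subseteq\; F^s_{1,3}\mc{T}(EC)^n,
\]
for every $s$ and $n$. These are immediate from the definitions: any direct summand $EC^{p,q,r}$ appearing in $F^s_1 \mc{T}(EC)^n$ satisfies $p \ge s$, hence $p + r \ge p \ge s$ (since $r \ge 0$), so that summand also lies in $F^s_{1,3}\mc{T}(EC)^n$. The second inclusion is symmetric, using $r \ge s$ and $p \ge 0$ to conclude $p + r \ge s$.

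Next I would note that the identity on $\mc{T}(EC(A))$ is trivially a cochain map, and the inclusions above show that it restricts to cochain maps
\[
(\mc{T}(EC(A)), F_1) \longrightarrow (\mc{T}(EC(A)), F_{1,3}), \qquad (\mc{T}(EC(A)), F_3) \longrightarrow (\mc{T}(EC(A)), F_{1,3}),
\]
in the category of filtered cochain complexes. Applying the standard result (see~\cite[Theorem 3.5]{McClearyJohn2001Augt}, which was already invoked in this section) that a morphism of filtered cochain complexes induces a morphism of the associated spectral sequences on every page, we obtain morphisms $\map{f}{\prescript{}{1}{E}}{\prescript{}{1,3}{E}}$ and $\map{g}{\prescript{}{3}{E}}{\prescript{}{1,3}{E}}$, each compatible with the differentials and converging to the identity on $HC^*_{dg}(A)$ at the $E_\infty$-page.

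There is essentially no obstacle here beyond the bookkeeping of filtration indices; the only thing worth double-checking is that the reindexing of $p$ versus $p+r$ in the two filtrations is handled consistently, so that the induced maps on $E_2$-pages have the expected sources and targets $H^p(\Delta_{-}^qHH^*_{dg}(A)) \to H^p(\bigoplus HHP^q_*(A))$ and $H^p(HCP^q_*(A)) \to H^p(\bigoplus HHP^q_*(A))$ respectively. This follows because on $E_0$ the induced map is the canonical inclusion of the corresponding associated graded pieces, which is compatible with the decompositions computed in the proofs of Theorems~\ref{spec seq for hc with hh as E2}, \ref{spec seq hcp}, and \ref{spec seq hhp}.
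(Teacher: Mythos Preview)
Your argument is correct and is essentially the same as the paper's: the paper simply invokes~\cite[Proposition~4.5]{specseqpolyphim2025}, which is precisely the general statement that comparable filtrations on the same complex yield a morphism of spectral sequences via the identity map, while you have unpacked that citation by explicitly verifying the inclusions $F_1^s\subseteq F_{1,3}^s$ and $F_3^s\subseteq F_{1,3}^s$ and appealing to~\cite[Theorem~3.5]{McClearyJohn2001Augt}.
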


\begin{proof}
Since $\prescript{}{1,3}{E}$ comes from the condition $p+r\ge s$ and clearly $p, r\in\{p,r\}$ then this is a direct application of~\cite[Proposition 4.5]{specseqpolyphim2025}. 
\end{proof}

\begin{corr}
For every dga $(A,\phi)$ there are maps 
\begin{align*}
	& HH^n_{dg}(A)\to HH^n(A^0) \\
	& HCP^0_n(A)\to HHP^0_n(A) \\
	& HC^n(A^0)\to HH^n(A^0)
\end{align*}
for all $n\ge 0$.
\end{corr}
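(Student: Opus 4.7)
The plan is to extract each of the three maps as a single $(p,q)$-component of one of the two morphisms of spectral sequences supplied by Proposition~\ref{spec seq morph 1 to 13 and 3 to 13}. Any morphism of spectral sequences $h\colon E\to E'$ induces a linear map $E_1^{p,q}\to (E')_1^{p,q}$ at every position, and I will simply identify the entries at the particular $(p,q)$ of interest in each of the three cases.

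For the map $HH^n_{dg}(A)\to HH^n(A^0)$, I would apply $f\colon\prescript{}{1}{E}\to\prescript{}{1,3}{E}$ at position $(0,n)$. From the explicit description of $\prescript{}{1}{E_1}$ displayed just before Theorem~\ref{spec seq for hc with hh as E2} we read off $\prescript{}{1}{E_1^{0,n}}\simeq HH^n_{dg}(A)$. On the other side, the formula $\prescript{}{1,3}{E_1^{p,*}}\simeq\bigoplus_{i=0}^p\Sigma^{-i}HHP^*_{p-i}(A)$ collapses at $p=0$ to $\prescript{}{1,3}{E_1^{0,n}}\simeq HHP^n_0(A)$, and the identification $HHP^*_0(A)\simeq HH^*(A^0)$ (proved analogously to Proposition~\ref{0th hcp}) gives $HHP^n_0(A)\simeq HH^n(A^0)$.

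For the map $HCP^0_n(A)\to HHP^0_n(A)$, I would apply $g\colon\prescript{}{3}{E}\to\prescript{}{1,3}{E}$ at position $(n,0)$. The identification $\prescript{}{3}{E_1^{*,q}}\simeq HCP^q_*(A)$ yields $\prescript{}{3}{E_1^{n,0}}\simeq HCP^0_n(A)$, while in $\prescript{}{1,3}{E_1^{n,0}}\simeq\bigoplus_{i=0}^n(\Sigma^{-i}HHP^*_{n-i}(A))^0$ only the $i=0$ summand contributes, giving $HHP^0_n(A)$. For the map $HC^n(A^0)\to HH^n(A^0)$, I would apply the same morphism $g$ at $(0,n)$: by Proposition~\ref{0th hcp} we have $\prescript{}{3}{E_1^{0,n}}\simeq HCP^n_0(A)\simeq HC^n(A^0)$, and as above $\prescript{}{1,3}{E_1^{0,n}}\simeq HHP^n_0(A)\simeq HH^n(A^0)$.

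No genuine obstacle arises, since the corollary asks only for the existence of the three maps and not for an explicit formula or any naturality beyond what is built into the notion of a morphism of spectral sequences. The only real work is bookkeeping: matching the $E_1$ entries of the three spectral sequences $\prescript{}{1}{E}$, $\prescript{}{3}{E}$, $\prescript{}{1,3}{E}$ at the positions $(0,n)$ and $(n,0)$ with the cohomology theories named in the statement. Once those identifications are in hand, the required maps are the corresponding components of $f$ and $g$.
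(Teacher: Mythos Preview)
Your proposal is correct and follows essentially the same approach as the paper: both invoke the morphisms $f$ and $g$ from Proposition~\ref{spec seq morph 1 to 13 and 3 to 13} and read off the three maps as components of the induced maps on $E_1$-pages. The paper only spells out the first map explicitly (as $f^{0,n}_1$) and leaves the other two to the reader, whereas you have carried out the bookkeeping for all three, including the observation that only the $i=0$ summand survives in $\prescript{}{1,3}{E_1^{n,0}}$.
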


\begin{proof}
From Proposition~\ref{spec seq morph 1 to 13 and 3 to 13} we know that there are morphisms of spectral sequences $\map{f}{\prescript{}{1}{E}}{\prescript{}{1,3}{E}}$ and $\map{g}{\prescript{}{3}{E}}{\prescript{}{1,3}{E}}$. Therefore for every $p,q$ and $r$ there are maps 
\begin{align*}
	&\map{f^{p,q}_r}{\prescript{}{1}{E^{p,q}_r}}{\prescript{}{1,3}{E^{p,q}_r}} \\
	&\map{g^{p,q}_r}{\prescript{}{3}{E^{p,q}_r}}{\prescript{}{1,3}{E^{p,q}_r}} 
\end{align*}
Then consider the $E_1$ pages of $\prescript{}{1}{E}$, $\prescript{}{3}{E}$ and $\prescript{}{1,3}{E}$. Then all maps can be found by comparing $E_1$ pages. For instance, the maps $HH^n_{dg}(A)\to HH^n(A^0)$ are the maps $\map{f^{0,n}_1}{\prescript{}{1}{E^{0,n}_1}}{\prescript{}{1,3}{E^{0,n}_1}}$.
\end{proof}

\begin{rem}
It's not clear if the maps $ HC^n(A^0)\to HH^n(A^0)$ in the corollary above are the same maps that relate cyclic and Hochschild cohomology in Connes' long exact sequence. It may be interesting to examine if there is a difference between these two maps. 
\end{rem}

\section{Spectral Sequences for DG-Categories}
\label{spec seq of dgcat}
Up until this point we have used the notation $\Hom{\mc{A}}{X}{Y}$ to denote the hom set between objects $X$ and $Y$ in the category $\mc{A}$. However, in this section we will use $\mc{A}(X, Y)$ instead.

Let $\mc{A}$ be a small dg-category and $M$ an $\mc{A}$-bimodule. We define the cosimplicial module $EH(\mc{A}, M)^n$ as
\[
	\prod_{(X_0,\dots, X_n)}\Hom{\Bbbk}{\mc{A}(X_{n-1}, X_n)\otimes\mc{A}(X_{n-2}, X_{n-1})\otimes\cdots\otimes\mc{A}(X_0, X_1)}{M(X_0, X_n)}
\]
where $(X_0, \dots, X_n)$ range over the objects of $\mc{A}$, that is, the product ranges over all possible sequences of $n+1$ objects in $\mc{A}$. The $0$-th part of this cosimplicial module is 
\[
	EH(\mc{A}, M)^0 = \prod_{X_0} M(X_0, X_0)
\]
which ranges over every object in $\mc{A}$.

We see then why it is necessary for $\mc{A}$ to be a small dg-category, otherwise it wouldn't be possible to form these products. 

Each factor of $EH^n(\mc{A}, M)$ is a cosimplicial module in its own right with the familiar coface maps $\partial^i$ and codegenerancy maps $\sigma^i$ that we have seen already for associative algebras and dg-algebras. $EH(\mc{A}, M)$ is then the product cosimplicial module of these factors. 

Each factor of $EH(\mc{A}, M)^n$ is also a dg-module and has the natural grading
\[
	\Hom{\Bbbk}{\mc{A}(X_{n-1}, X_n)\otimes\cdots\otimes\mc{A}(X_0, X_1)}{M(X_0, X_n)}^m.
\]
Therefore $EH(\mc{A}, M)^n$ is a dg-module, where 
\[
	 EH(\mc{A}, M)^{n,m} = \prod_{(X_0,\dots, X_n)}\Hom{\Bbbk}{\mc{A}(X_{n-1}, X_n)\otimes\cdots\otimes\mc{A}(X_0, X_1)}{M(X_0, X_n)}^m
\]
and so it follows that $EH(\mc{A}, M)$ is a bicomplex.

\begin{defn}
The \emph{Hochschild complex $C_{dg}(\mc{A})$} of the dg-category $\mc{A}$ is the product total complex $\mc{T}(EH(\mc{A}, M))$. The \emph{Hochschild cohomology $HH^\ast_{dg}(\mc{A}, M)$} of the dg-category $\mc{A}$ with coefficients in $M$ is then defined as $HH^n_{dg}(\mc{A}, M) = H^n(C_{dg}(\mc{A}))$.
\end{defn}

Compare with \cite[Section 6]{neumann2016spectral}, and \cite[Section 5.4]{keller2006differential}. The spectral sequences that we discussed for Hochschild cohomology of dgas extend naturally to the dg-category setting. In fact as we see from~\cite[Theorem 7.1]{neumann2016spectral} this is how it was first described, and we have explored the special case for a one object dg-category, i.e. a dga. So we have the following:

\begin{thm}
For every dg-category $\mc{A}$ and every $\mc{A}$-bimodule $M$ there is a spectral sequence $E_2^{p,q}\simeq HH^p_{gr}(H_\bullet\mc{A}, \Sigma^tH_\bullet M)\Rightarrow HH^{p+q}_{dg}(\mc{A}, M)$, where $H_\bullet\mc{A}$ is the homology category of $\mc{A}$ and $\Sigma^tH_\bullet M$ is the shifted homology of $M$. 
\end{thm}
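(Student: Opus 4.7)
The plan is to adapt, essentially verbatim, the construction of the characteristic spectral sequence from the dga case (as in \cite{neumann2016spectral} and as quoted in the excerpt just before the theorem) to the dg-category setting. The starting point is the vertical filtration on the Hochschild bicomplex
\[
F^s\mc{T}(EH(\mc{A}, M))^n = \bigoplus_{\substack{p+q=n\\ q\ge s}}EH(\mc{A}, M)^{p,q},
\]
where $EH(\mc{A}, M)^{p,q}$ is the $q$-graded piece of the $p$-th column as described in the previous section. Since each column $EH(\mc{A}, M)^{p,*}$ is concentrated in nonnegative $q$-degrees and the diagonal is finite, the filtration is canonically bounded on each total degree $n$, and by \cite[Theorem 2.6]{McClearyJohn2001Augt} the associated spectral sequence converges to $HH^{p+q}_{dg}(\mc{A}, M)$.

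Next I would identify the $E_0$ and $E_1$ pages. On $E_0$ the only surviving differential is the horizontal Hochschild coboundary $b$, acting row-wise, so
\[
E_1^{p,q} \cong H^p\bigl(EH(\mc{A}, M)^{*,q}, b\bigr).
\]
The row $EH(\mc{A}, M)^{*,q}$ consists of products
\[
\prod_{(X_0,\dots, X_p)}\Hom{\Bbbk}{\mc{A}(X_{p-1}, X_p)\otimes\cdots\otimes\mc{A}(X_0, X_1)}{M(X_0, X_p)}^q,
\]
which is precisely the Hochschild cochain complex of the graded category underlying $\mc{A}$, with values in the degree-$q$ component of $M$ viewed as a graded bimodule. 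Because $\Bbbk$ is a field and taking $\Bbbk$-duals commutes with products, cohomology commutes with the product over sequences of objects; moreover, since the internal differential of $\mc{A}$ and $M$ has been ignored so far, after passing to cohomology one obtains graded Hochschild cohomology of $H_\bullet\mc{A}$ with coefficients in the shifted graded bimodule. This yields the identification
\[
E_2^{p,q} \simeq HH^p_{gr}\bigl(H_\bullet\mc{A},\, \Sigma^q H_\bullet M\bigr),
\]
matching the formula stated in the theorem (with the understanding that the $t$ in the statement is the row index $q$).

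The main obstacle is the bookkeeping required to pass cohomology through the product over tuples of objects and to ensure that the Koszul/shift conventions at the level of the graded category $H_\bullet\mc{A}$ are compatible with the differential $\del$ (which becomes trivial on $E_1$ after taking $b$-cohomology horizontally, or vice versa if one instead takes $\del$-cohomology first). The cleanest route is to apply the $d_0 = \del$, $d_1 = b$ convention matching the one used in the dga case of \cite{neumann2016spectral}: then $E_1^{p,q}$ is identified with $\Hom{H_\bullet\mc{A}}{\cdot}{\cdot}$-complexes at graded degree $q$ (via a Künneth-type argument valid over a field), and the Hochschild coboundary on $E_1$ is induced by the composition in $H_\bullet\mc{A}$, so $E_2$ computes graded Hochschild cohomology of $H_\bullet\mc{A}$. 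Once this identification is in place, the remainder of the proof is identical in form to \cite[Theorem 7.1]{neumann2016spectral}, with the dga $A$ replaced by the small dg-category $\mc{A}$ and tensor powers $A^{\otimes p}$ replaced by products over sequences of objects.
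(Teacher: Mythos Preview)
The paper does not actually prove this theorem: it simply records it as \cite[Theorem~7.1]{neumann2016spectral} (see the sentence immediately preceding the statement). So your proposal is already more detailed than what the paper offers, and the overall strategy --- filter the Hochschild bicomplex, identify the pages, invoke K\"unneth over a field --- is exactly the one underlying the cited reference.

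That said, your write-up contains a genuine internal inconsistency that would need to be fixed. You define the filtration by $q\ge s$ and then assert that the surviving $d_0$ is the Hochschild coboundary $b$. With that choice, $E_1^{p,q}$ is the $b$-cohomology of a fixed row, i.e.\ graded Hochschild cohomology of the \emph{underlying} graded category of $\mc{A}$ with coefficients in the degree-$q$ slice of $M$; the homology category $H_\bullet\mc{A}$ never appears, and you cannot obtain the stated $E_2$-term $HH^p_{gr}(H_\bullet\mc{A},\Sigma^q H_\bullet M)$ this way. You seem to notice this, since in the last paragraph you switch to ``the $d_0=\del$, $d_1=b$ convention'', which is the correct one --- but that convention corresponds to the filtration by the Hochschild index $p$ (columns), not by $q$. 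In other words, your displayed filtration should read $p\ge s$, so that $E_0^{p,*}$ is a single column $(EH(\mc{A},M)^{p,*},\del)$; then over the field $\Bbbk$ a K\"unneth argument identifies $E_1^{p,q}$ with the degree-$q$ part of
\[
\prod_{(X_0,\dots,X_p)} \Hom{\Bbbk}{H_\bullet\mc{A}(X_{p-1},X_p)\otimes\cdots\otimes H_\bullet\mc{A}(X_0,X_1)}{H_\bullet M(X_0,X_p)},
\]
and the induced $d_1=b$ is the Hochschild differential of the graded category $H_\bullet\mc{A}$, yielding the claimed $E_2$. Once you correct the filtration index, the rest of your outline (bounded filtration, convergence via \cite[Theorem~2.6]{McClearyJohn2001Augt}, K\"unneth to pass to $H_\bullet$) is fine and matches the argument the paper is citing.
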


Now, for cyclic cohomology the literature focuses mostly on cyclic \emph{homology} of dg-categories (and variations). For instance see~\cite{KellerBernhard2023Aitr} and~\cite{ShklyarovDmytro2017Csoc} for a definition of the cyclic homology of dg-categories and how it relates to Calabi-Yau structures. A definition of the periodic cyclic homology of $\mathbb{Z}/2$-graded dg-categories is given in~\cite{EfimovAlexanderI2018CHoC}. We expect the cyclic cohomology of dgas to extend naturally to dg-categories, just as it does with cyclic homology, and also the spectral sequences we have described for its calculation.

For classical cyclic cohomology and cyclic cohomology of dgas, we take the coefficients to be the dual algebra $A^\ast=\Hom{\Bbbk}{A}{\Bbbk}$. In the dg-category setting we take the coefficients $M$ to be the $\mc{A}$-bimodule $\map{\mathcal{D}}{\mc{A}^{op}\otimes\mc{A}}{\textbf{dgMod}_\Bbbk}$ that sends each pair of objects $X_1, X_2\in\mc{A}$ to the dual of their hom set, that is
\[
	\mathcal{D}(X_1, X_2) = \mc{A}(X_1, X_2)^\ast
\]
Since dg-categories are rigid~\cite{Bertrand2011}, that is all objects have a dual object, and the category $\textbf{dgMod}_\Bbbk$ of differential graded $\Bbbk$-vector spaces is a dg-category then $\mathcal{D}$ is a well-defined functor. We set $EH(\mc{A}) := EH(\mc{A}, \mc{D})$ and so
\[
	EH(\mc{A})^{n,m} = \prod_{(X_0,\dots, X_n)}\Hom{\Bbbk}{\mc{A}(X_{n-1}, X_n)\otimes\cdots\otimes\mc{A}(X_0, X_1)}{\mc{D}(X_0, X_n)}^m
\]
$EH(\mc{A})$ is a cosimplicial module, and with the cyclic action it is a cocyclic module, and therefore $EH(\mc{A})$ make up the components of the cyclic bicomplex which expands as the cyclic tricomplex $EC(\mc{A})$ analagous to the cyclic tricomplex in Definition~\ref{hc defn with cyc tricomp}. Therefore we have the following definition.

\begin{defn}
The \emph{cyclic cohomology $HC^\ast_{dg}(\mc{A})$} of the dg-category $\mc{A}$ is the cohomology of the total complex of $EC(\mc{A})$.
\end{defn}

Since $EC(\mc{A})$ is a tricomplex then by~\cite[Theorem 4.3]{specseqpolyphim2025} there are at least $6$ spectral sequences that calculate $HC^*_{dg}(\mc{A})$, and in particular they are the same spectral sequences as the ones that calculate $HC^*_{dg}(A)$ when $A$ is a dga. Consider the spectral sequence from Theorem~\ref{spec seq for hc with hh as E2}, then in exactly the same way each column $\prescript{}{1}{E^{p,\ast}_0}$ is the Hochschild bicomplex $EH(\mc{A})$ preceded by $p$ columns of $0$'s. Therefore following the same reasoning we get:

\begin{thm}
\label{spec seq for hc of dgcat}
For every dg-category $\mc{A}$ there's a spectral sequence with $E_2^{p,q}\simeq H^p(\Delta^q_{-}HH^*_{dg}(\mc{A}))$ that converges to the cyclic cohomology $HC^{p+q}_{dg}(\mc{A})$, where $HH^*_{dg}(\mc{A}):=HH^*_{dg}(\mc{A}, \mc{D})$. 
\end{thm}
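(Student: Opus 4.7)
The plan is to mirror the proof of Theorem~\ref{spec seq for hc with hh as E2} in the dg-category setting, exploiting the fact that $EC(\mc{A})$ is structurally the same tricomplex as $EC(A)$ with $C^n(A)^m$ replaced by $EH(\mc{A})^{n,m}$. First, I would apply the filtration $F_1$ on $\mc{T}(EC(\mc{A}))$ defined by
\[
	F^s_1\mc{T}(EC(\mc{A}))^n = \bigoplus_{\substack{p+q+r=n\\ p\ge s}}EC(\mc{A})^{p,q,r}.
\]
This filtration is canonically bounded by the same combinatorial argument used for dgas, so by~\cite[Theorem 2.6]{McClearyJohn2001Augt} the associated spectral sequence $\prescript{}{1}{E}$ converges to $HC^*_{dg}(\mc{A})$.

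Next, identify the $E_0$-page. By~\cite[Lemma 4.4]{specseqpolyphim2025} applied to $EC(\mc{A})$, each column $\prescript{}{1}{E_0^{p,\ast}}$ is the total complex of the Hochschild bicomplex $EH(\mc{A})$ preceded by $p$ columns of $0$'s. Taking vertical cohomology therefore yields
\[
	\prescript{}{1}{E_1^{p,q}} \simeq \begin{cases} HH^{q-p}_{dg}(\mc{A}) & q\ge p \\ 0 & q<p \end{cases}
\]
so that each row satisfies $\prescript{}{1}{E_1^{\ast,q}} \simeq \Delta^q_{-}HH^*_{dg}(\mc{A})$ as a cochain complex. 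Taking horizontal cohomology then produces
\[
	\prescript{}{1}{E_2^{p,q}} \simeq H^p(\Delta^q_{-}HH^*_{dg}(\mc{A})),
\]
which is exactly the claimed identification of the $E_2$-page.

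The main obstacle is essentially one of bookkeeping rather than mathematics: one must verify that the lemma from~\cite{specseqpolyphim2025} referenced for the dga case applies to the more general $EC(\mc{A})$. Because the indexing conventions for $EC(\mc{A})^{p,q,r}$, its differentials $b$, $B$, and $\pm\del$, and the shift pattern between adjacent bicomplex slices $EC(\mc{A})^{p,\ast,\ast}$ are identical to those of $EC(A)^{p,q,r}$, the decomposition used in the dga case transports without modification. No new analytic or homological input is required beyond having defined $HH^*_{dg}(\mc{A})$ as $HH^*_{dg}(\mc{A},\mc{D})$.
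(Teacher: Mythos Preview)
Your proposal is correct and follows essentially the same approach as the paper: the paper simply remarks that since $EC(\mc{A})$ is a tricomplex the spectral sequences from the dga case carry over, and that in particular each column $\prescript{}{1}{E_0^{p,\ast}}$ is the Hochschild bicomplex $EH(\mc{A})$ preceded by $p$ columns of zeros, so the argument of Theorem~\ref{spec seq for hc with hh as E2} applies verbatim. Your write-up actually spells out a bit more detail (the explicit $E_1$-page identification and the bookkeeping remark about transporting~\cite[Lemma 4.4]{specseqpolyphim2025}) than the paper does.
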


Furthermore, the results Proposition~\ref{degen at E2 gives surj} and Proposition~\ref{hc1 and hc2} also apply here using the same reasoning. Therefore we have the following: 

\begin{prop}
For all dg-categories $\mc{A}$ we have
\begin{align*}
	& HC^0_{dg}(\mc{A})\simeq HH^0_{dg}(\mc{A})\\
	&HC^1_{dg}(\mc{A}) \simeq H^0(\Delta_{-}^1HH^*_{dg}(\mc{A})) \\
	&HC^2_{dg}(\mc{A}) \simeq H^0(\Delta_{-}^2HH^*_{dg}(\mc{A}))\oplus H^1(\Delta_{-}^1HH^*_{dg}(\mc{A}))
\end{align*}
\end{prop}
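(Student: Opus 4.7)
The plan is to transport the dga argument of Proposition~\ref{hc1 and hc2} (together with the low-degree observation preceding it) directly to the dg-category setting, using the spectral sequence provided by Theorem~\ref{spec seq for hc of dgcat}. That theorem supplies a convergent spectral sequence with $E_2^{p,q}\simeq H^p(\Delta_{-}^q HH^*_{dg}(\mc{A}))\Rightarrow HC^{p+q}_{dg}(\mc{A})$, and by construction of $\Delta_{-}^q$ we have $E_2^{p,q}=0$ whenever $p>q$. This vanishing persists on every subsequent page, so the $E_r$ diagram has the same staircase shape (zeros strictly below the antidiagonal $p=q$) that appears in the dga case.

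First I would dispatch $HC^0_{dg}(\mc{A})$. The only position on the antidiagonal $p+q=0$ is $(0,0)$; every outgoing differential $d_r\colon E_r^{0,0}\to E_r^{r,1-r}$ with $r\ge 2$ lands strictly below the diagonal, hence vanishes, and incoming differentials are zero because they originate in negative bidegrees. Thus $HC^0_{dg}(\mc{A})\simeq E_\infty^{0,0}\simeq E_2^{0,0}\simeq H^0(\Delta_{-}^0 HH^*_{dg}(\mc{A}))\simeq HH^0_{dg}(\mc{A})$. For $HC^1_{dg}(\mc{A})$ the relevant positions are $(0,1)$ and $(1,0)$; the staircase vanishing kills $E_\infty^{1,0}$, while at $(0,1)$ the differential $d_r\colon E_r^{0,1}\to E_r^{r,2-r}$ again hits zero for $r\ge 2$. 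Hence $E_\infty^{0,1}=E_2^{0,1}\simeq H^0(\Delta_{-}^1 HH^*_{dg}(\mc{A}))$, which yields the second formula. For $HC^2_{dg}(\mc{A})$ the same analysis applied to $(0,2)$ and $(1,1)$ shows both terms stabilize at the $E_2$ page (the targets of $d_r$ for $r\ge 2$ always sit strictly below the diagonal), and $E_\infty^{2,0}=0$. Convergence then delivers $HC^2_{dg}(\mc{A})\simeq E_\infty^{0,2}\oplus E_\infty^{1,1}$ as a direct sum because we are working over the field $\Bbbk$, so the associated graded of the filtration on $HC^2_{dg}(\mc{A})$ splits.

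There is no substantive new obstacle here: the argument is essentially formal and mirrors the dga proof. The one thing worth stating explicitly is that Theorem~\ref{spec seq for hc of dgcat} really does provide the staircase $E_1$-page picture in the dg-category setting, which is already justified by the author's observation that the columns $\prescript{}{1}{E^{p,*}_0}$ for the filtration $F_1$ on $\mc{T}(EC(\mc{A}))$ are the Hochschild bicomplex $EH(\mc{A})$ preceded by $p$ columns of zeros, exactly as in the dga case. Once this is in hand, the three isomorphisms follow by the same edge-homomorphism and stabilization bookkeeping as in Proposition~\ref{hc1 and hc2}.
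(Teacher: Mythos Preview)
Your proposal is correct and follows essentially the same approach as the paper: the paper simply states that the reasoning of Proposition~\ref{hc1 and hc2} (and the $HC^0$ observation before it) carries over verbatim to the dg-category setting via Theorem~\ref{spec seq for hc of dgcat}, and your write-up is a careful spelling-out of exactly that argument. If anything, you are more explicit than the paper about why the relevant $E_2$-terms stabilize and why the filtration splits over $\Bbbk$.
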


\begin{prop}
When the spectral sequence from Theorem~\ref{spec seq for hc of dgcat} degenerates at the $E_2$ page then there is a surjection
\[
	HC^n_{dg}(\mc{A})\to \ker{(HH^n_{dg}(\mc{A})\to HH^{n-1}_{dg}(\mc{A}))}
\]
for all $n\ge 1$.
\end{prop}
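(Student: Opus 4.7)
The plan is to mirror the argument of Proposition~\ref{degen at E2 gives surj} in the dg-category setting, since by Theorem~\ref{spec seq for hc of dgcat} we have exactly the same shape of $E_2$-page, just with $\mc{A}$ in place of the dga $A$. The key identification is that
\[
\prescript{}{1}{E^{0,n}_2} \simeq H^0(\Delta_{-}^n HH^*_{dg}(\mc{A})) \simeq \ker\bigl(HH^n_{dg}(\mc{A})\to HH^{n-1}_{dg}(\mc{A})\bigr),
\]
the last isomorphism following directly from the definition of the negative truncation $\Delta_{-}^n$ (its $0$-th term is $HH^n_{dg}(\mc{A})$ and its $1$-st term is $HH^{n-1}_{dg}(\mc{A})$, connected by the differential induced from the horizontal differential of $\prescript{}{1}{E_1}$).

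Next, I would recall the standard construction of the edge homomorphism. Because $\prescript{}{1}{E^{p,q}_\infty}=0$ whenever $q<p$ (observed already in the dga case and unchanged in the dg-category case), the filtration on $HC^n_{dg}(\mc{A})$ has $\prescript{}{1}{E^{0,n}_\infty}$ as its top quotient, giving a canonical surjection $HC^n_{dg}(\mc{A})\twoheadrightarrow \prescript{}{1}{E^{0,n}_\infty}$. Moreover, since the $E_r$ differentials leaving the $(0,n)$-spot land in fourth quadrant type positions, the tower of subquotients gives a canonical chain of inclusions
\[
\prescript{}{1}{E^{0,n}_\infty} \subseteq \cdots \subseteq \prescript{}{1}{E^{0,n}_3} \subseteq \prescript{}{1}{E^{0,n}_2},
\]
and the edge homomorphism is the composite of the surjection with these inclusions.

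Finally, degeneration at the $E_2$-page forces all differentials $d_r$ for $r\ge 2$ to vanish, so the tower of inclusions collapses to an equality $\prescript{}{1}{E^{0,n}_\infty}=\prescript{}{1}{E^{0,n}_2}$. Composing the surjection $HC^n_{dg}(\mc{A})\twoheadrightarrow \prescript{}{1}{E^{0,n}_\infty}$ with the identification $\prescript{}{1}{E^{0,n}_\infty}=\prescript{}{1}{E^{0,n}_2}\simeq \ker(HH^n_{dg}(\mc{A})\to HH^{n-1}_{dg}(\mc{A}))$ yields the desired surjection for every $n\ge 1$. There is no real obstacle here: the only subtlety to check carefully is that the identification of $\prescript{}{1}{E_2^{0,n}}$ with the kernel really is the edge map that factors through $HC^n_{dg}(\mc{A})$ — this follows because the vertical filtration $F_1$ is canonically bounded (as noted earlier in the paper, using \cite[Theorem 2.6]{McClearyJohn2001Augt}) so the usual edge-homomorphism machinery applies verbatim in the dg-category setting.
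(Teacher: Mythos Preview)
Your proposal is correct and follows exactly the approach the paper intends: the paper does not write out a separate proof here but simply states that the reasoning of Proposition~\ref{degen at E2 gives surj} applies verbatim, which is precisely the edge-homomorphism argument you have spelled out.
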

\bibliographystyle{abbrv}
\bibliography{spec_seq_for_cyc_refs}

\begin{thebibliography}{10}

\bibitem{Bertrand2011}
T.~Bertrand.
\newblock {\em Lectures on DG-Categories}, pages 243--302.
\newblock Springer Berlin Heidelberg, Berlin, Heidelberg, 2011.

\bibitem{BousfieldA.K.1972Hlca}
A.~K. Bousfield and D.~Kan.
\newblock {\em Homotopy limits, completions and localizations}.
\newblock Lecture notes in mathematic;304. Springer-Verlag, Berlin, 1972.

\bibitem{connes1983cohomologie}
A.~Connes.
\newblock Cohomologie cyclique et foncteurs extn.
\newblock {\em CR Acad. Sci. Paris S{\'e}r. I Math}, 296(23):953--958, 1983.

\bibitem{ConnesAlain1994Ng}
A.~Connes.
\newblock {\em Noncommutative geometry}.
\newblock Academic Press, 1994.

\bibitem{DoldAlbrecht1958HoSP}
A.~Dold.
\newblock Homology of symmetric products and other functors of complexes.
\newblock {\em Annals of Mathematics}, 68(1):54--80, 1958.

\bibitem{EfimovAlexanderI2018CHoC}
A.~I. Efimov.
\newblock Cyclic homology of categories of matrix factorizations.
\newblock {\em International mathematics research notices},
  2018(12):3834--3869, 2018.

\bibitem{GorokhovskyAlexander2002Scca}
A.~Gorokhovsky.
\newblock Secondary characteristic classes and cyclic cohomology of {H}opf
  algebras.
\newblock {\em Topology}, 41(5):993--1016, 2002.

\bibitem{Kaledin2017}
D.~Kaledin.
\newblock {\em Spectral Sequences for Cyclic Homology}, pages 99--129.
\newblock Springer International Publishing, Cham, 2017.

\bibitem{DanielM.Kan1958Ficc}
D.~Kan.
\newblock Functors involving c.s.s. complexes.
\newblock {\em Transactions of the American Mathematical Society},
  87(2):330--346, 1958.

\bibitem{keller2006differential}
B.~Keller.
\newblock On differential graded categories.
\newblock {\em Int. Cong. Math.}, 2006.

\bibitem{KellerBernhard2023Aitr}
B.~Keller and Y.~Wang.
\newblock An introduction to relative {C}alabi-{Y}au structures.
\newblock {\em preprint}, arXiv:2111.10771v3, 2023.

\bibitem{khalkCycCohom}
M.~Khalkhali.
\newblock {\em Basic Noncommutative Geometry}.
\newblock European Mathematical Society, 1st edition, 2008.

\bibitem{McClearyJohn2001Augt}
J.~McCleary.
\newblock {\em A user's guide to spectral sequences}.
\newblock Cambridge studies in advanced mathematics ; 58. Cambridge University
  Press, Cambridge, 2nd edition, 2001.

\bibitem{neumann2016spectral}
F.~Neumann and M.~Szymik.
\newblock Spectral sequences for {H}ochschild cohomology and graded centers of
  derived categories.
\newblock {\em Selecta Mathematica}, 23(3):1997--2018, 2017.

\bibitem{specseqpolyphim2025}
A.~Phimister.
\newblock The spectral sequence of a polycomplex.
\newblock {\em preprint}, arXiv:2508.14348, 2025.

\bibitem{phimphd2025}
A.~Phimister.
\newblock {\em Variations on {H}ochschild and cyclic cohomology and spectral
  sequences}.
\newblock {Ph.D. thesis}, University of Leicester, 2025.

\bibitem{ShklyarovDmytro2017Csoc}
D.~Shklyarov.
\newblock Calabi-{Y}au structures on categories of matrix factorizations.
\newblock {\em Journal of {G}eometry and {P}hysics}, 119:193--207, 2017.

\bibitem{weibel}
C.~A. Weibel.
\newblock {\em An introduction to homological algebra}.
\newblock Cambridge studies in Advanced Mathematics vol. 38. Cambridge
  University Press, Cambridge, 1994.

\end{thebibliography}
\end{document}